%% file: main.tex
\let\ORIlabel\label
\let\ORIrefstepcounter\refstepcounter
\AddToHook{package/hyperref/before}{%
   \let\label\ORIlabel
   \let\refstepcounter\ORIrefstepcounter
}
\documentclass[review,onefignum,onetabnum]{siamart171218}
\usepackage{graphicx}
\input{math_commands.tex}

\nolinenumbers
\usepackage{url}
\usepackage[numbers]{natbib}
\usepackage{amsmath, amsfonts}
\usepackage{amssymb}
\usepackage{comment}
\usepackage{booktabs}
\usepackage{siunitx}
\usepackage{subcaption}
\usepackage{algorithm}
\usepackage[noend]{algpseudocode}
\usepackage{tikz-cd}
\usepackage{hyperref}
\usepackage{cleveref}

\algnewcommand{\Assumption}{\item[\textbf{Assumption:}]}

\algnewcommand{\Problem}{\item[\textbf{Problem:}]}

\newcommand{\Mxn}{M_x^{(n)}}

\newcommand{\BR}{\mathbb{R}}

\newcommand{\norm}[1]{\left\lVert#1\right\rVert}
\DeclareMathOperator{\rank}{rank}
\DeclareMathOperator{\supp}{supp}
\DeclareMathOperator{\dist}{dist}

\newsiamremark{remark}{Remark}
\newsiamthm{assumption}{Assumption}
\newsiamremark{problem}{Problem}

\headers{On Moment-Based Recovery of Measures with Atomic and Continuous Parts }{Ruben Karapetyan, Shenyuan Ma, Ales Wodecki, Jakub Marecek}

\title{On Moment-Based Recovery of Measures with Atomic and Continuous Parts %
\thanks{Submitted to the editors \today.}}

\author{Ruben Karapetyan\thanks{Czech Technical University in Prague (\email{karaprub@fjfi.cvut.cz}).}
\and Shenyuan Ma\thanks{Czech Technical University in Prague (\email{ma.shenyuan@fel.cvut.cz}).} \and Ales Wodecki\thanks{Czech Technical University in Prague (\email{wodecale@fel.cvut.cz}).} \and Jakub Marecek\thanks{Czech Technical University in Prague (\email{jakub.marecek@fel.cvut.cz}).}}

\begin{document}

\maketitle

\begin{abstract}
Recovering probability measures from moments is a central theme in statistics and optimization. In particular, we focus on the recovery of measures from moments and pseudo-moments, which may come from solving the moment-SOS hierarchy in one dimension. A typical strategy when recovering a measure from moments is to verify the flat-extension property, which certifies that the underlying measure is finitely atomic and ultimately leads to recovery. For many classes of measures, however, the
flat extension never occurs and thus if one aims to recover the measure corresponding to the moments, assumptions need to be made. We formulate a new kind of recovery problem, where one assumes that the measure has compact support and a fulfills a mild separation criterion. The key feature of this recovery problem formulation is that it covers not only finitely atomic measures, but also measures with continuous components. We study this new problem and describe three situations in which different guarantees can be proven.
These guarantees are developed by studying the spectral representation of the Gelfand–Naimark–Segal construction and its connection to orthogonal polynomials, which ultimately allows us to provide several additional insights, which apply to algorithms widely used for the recovery of atomic measures from moments. Furthermore, the statements proven lead to novel algorithms, which we benchmark, further confirming the theoretical findings.

\end{abstract}


\begin{keywords}
  moment problem, orthogonal polynomials, multiplication operator, spectral theory, measure recovery
\end{keywords}

\begin{AMS}
  62G05 
  44A60, 
  47B15, 
  42C05, 
  90C22 
\end{AMS}

\section{Introduction}

\paragraph{Motivation}
The moment-sum-of-squares (moment-SOS) hierarchy \cite{Lasserre2001GlobalOptimization,Parrilo2003SemidefiniteProgramming} provides a motivational example in which the recovery of a measure from moments plays a pivotal role. It constitutes a systematic approach to polynomial optimization problems by replacing them with a sequence of semidefinite programs of increasing size.
At each relaxation order $d$, the hierarchy yields a matrix of pseudo-moments -- a positive semidefinite moment matrix $M_d(\mathbf{y})$.
Extracting the representing measure, or at least its support, from the truncated pseudo-moment data is therefore central to the practical use of the hierarchy \cite{HenrionLasserre2005DetectingGlobalOptimality,Nie2014_ATKMP,Gamertsfelder2025_GMPSoS}.

In the simplest case, the representing measure is finitely atomic, i.e., supported on finitely many points. There, the flat-extension theorem of Curto and Fialkow \cite{curto1996} provides a complete characterization: a condition on the ranks of consecutive moment matrices
$\rank M_d(\mathbf{y}) = \rank M_{d+1}(\mathbf{y})$
is both necessary and sufficient for the existence of a finitely-atomic representing measure, and its atoms can be extracted algorithmically via the Gelfand--Naimark--Segal (GNS) construction \cite{LopezQuijorna2021DetectingGNS,BurgdorfKlepPovh2016_OptimizationNoncommuting,KlepPovhVolcic2018_MinimizerExtractionIsRobust}. 
In the more challenging setting, where the measure has a continuous component (of non-zero Lebesgue measure), the strategy above fails and a fundamentally different approach is required to extract information.

\paragraph{Overview of prior work}
The recovery of measures from moment data has been studied from several distinct perspectives.
For measures supported on $r$ points, the GNS construction yields the atoms as the spectrum of a finite-dimensional multiplication operator \cite{curto1996,LopezQuijorna2021DetectingGNS}.
When the measure admits a density expressible as a finite polynomial, orthogonal polynomial expansion methods recover it exactly \cite{gautschi2004,soize2004}.
For measures satisfying a finite-moment condition, maximum entropy methods apply to certain classes of absolutely continuous distributions \cite{jaynes1957,cover2006}.
Nonparametric density estimation approaches, including kernel density estimation \cite{wand1995} and Gaussian mixture approximations \cite{hall2003,lindsay1995}, handle more general continuous measures, but require access to samples and provide only asymptotic guarantees.
When the measure cannot be determined from its moments, sample-based approaches are necessary.
Variational methods such as total variation minimization can provably recover spike measures from noisy data \cite{DeCastro2012BLASSO,Duval2015SparseSpikes}, but require separation conditions between atoms that may fail in practice.
Spectral methods including Prony’s method \cite{49090,harshman1970foundations}, annihilating filters, and MUSIC/ESPRIT-type algorithms are sensitive to noise \cite{Katz2024AccuracyProny,Li2021StabilityMUSICESPRIT}.
The finite rate of innovation framework \cite{vetterli2002sampling} assumes a parametric signal model whose misspecification degrades recovery.
Absent from this landscape is a framework and method that is able to work with the truncated pseudomoment sequence -- without sample access and without assuming 
the measure is finitely atomic.
Such a method should be able to handle a broader class of measures, namely ones containing a continuous and finitely atomic part.

\paragraph{Contributions}
We study the recovery of Borel probability measures on $\mathbb{R}$ of the form
$\mu = \mu_{pp} + \mu_{ac}$,
where $\mu_{pp}$ is a pure point part and $\mu_{ac}$ is an absolutely continuous part with disjoint, compact support.
Our main contributions are as follows.

\begin{enumerate}
\item \emph{Problem definition.} We present a problem formulation that allows the approximate recovery of the support of a measure with continuous and discrete components without the apriori knowledge of the structure ($\mu_{pp}$ or $\mu_{ac}$ might be zero) in Section \ref{sec:probdef}.
\item \emph{Novel results on the roots of orthogonal polynomials.} In Section \ref{sec:our_asymptotic_results} we provide novel results pertaining to the roots of orthogonal polynomials. These results provide new relations between the roots of orthogonal polynomials and the support of the measure under mild conditions. For ease of understanding, these results are summarized informally in Section \ref{sec:our_results_OGPoly}.
\item \emph{Novel algorithm.} We leverage the results on orthogonal polynomials to propose  Algorithm~\ref{alg:atom_extraction_theoretic}, which solves the problem proposed in Section \ref{sec:probdef}. This effectively estimates the atoms of
$\mu_{pp}$
and one or more intervals supporting the continuous part $\mu_{ac}$ of the measure.
\item \emph{Guarantees on algorithm.} 
Utilizing the results of  Section \ref{sec:our_results_OGPoly}, we show that the proposed algorithm  is polynomial-time when only a single continuous component is present and finite time where there are multiple continuous components under the BSS computational model.

\item \emph{Numerical illustration.} 
The proposed  algorithm is tested in Section~\ref{sec:num}, providing a very clear demonstration of many of the theoretical claims.

\item \emph{Operator-theoretic framework and connection to other algorithms}.
In Section \ref{sec:gen-curto-fialkow}, we show that the finite dimensional GNS multiplication operator, which is computed by a several existing solution-extraction algorithms, is unitarily equivalent to the finite-dimensional compression of the multiplication operator $M_x$  (on $L^2(\mathbb{R},\mu)$) on the subspace of polynomials of degree at most $n$, whose eigenvalues may be computed by finding the roots of the corresponding orthogonal polynomials. This shows that the theory developed in the present article can be applied to explain these solution-extraction algorithms for a larger class of measures than just finitely atomic ones.
Additionally, this places the GNS procedure squarely within the spectral theory of self-adjoint operators and provides a common framework for the finitely atomic and the mixed cases.

\end{enumerate}

\section{Preliminaries}
\label{sec:prelims}

For the readers convenience, we gather some details on the moment and truncated moment problems, which are most closely related to our problem.
Let $\mu$ be  a Borel probability measure on $E \subset\mathbb{R}$.
The measure generates a sequence
\begin{align*}
\mathbf{y} = (y_\alpha)_{\alpha\ge 0},
\qquad
y_\alpha := \int_{\mathbb{R}} x^\alpha \, d\mu(x),
\end{align*}
if all integrals $y_{\alpha}$ exist and are finite and we call $\mathbf{y}$ the moment sequence corresponding to $\mu$.

\emph{The moment problem} addresses the inverse problem.
Given a sequence $\mathbf{y}$, we ask whether there exists a measure $\mu$ that generates this sequence, whether such a measure is unique, and if so, whether it can be explicitly reconstructed.
This problem is classical and well studied; we refer to standard texts \cite{akhiezer1965classical} and \cite{Schmudgen2017} for a comprehensive treatment. Out of the many variants of the moment problem, the Hamburger moment problem is the one most closely related to our setting. In this problem, one studies the well definition and reconstruction of a measure from moments with support in $\mathbb{R}$. In particular, if one assumes that the measure associated with the moment sequence $\mathbf{y}$ is compactly supported, Carleman's condition  \cite{Schmudgen2017} is satisfied, and there exists an underlying measure uniquely defined by the moment sequence $\mathbf{y}$. 

In contrast to the classical setting described above, the computational literature focuses mainly on the truncated moment problem, where finitely many moments are available and the degree of the moment matrix is a fixed part of the problem \cite{Schmudgen2017, CurtoFialkow1998, Kimsey2021, curto1996}. Next, one imposes fairly strong structural assumptions on the measure itself to ensure the uniqueness of the reconstruction. Under these assumptions, the algorithms enable recovery of the full underlying measure, which can then be used. An example of such a use is the extraction of minimizers from moment-SOS relaxations \cite{HenrionLasserre2005DetectingGlobalOptimality, Gamertsfelder2025_GMPSoS, Nie2014_ATKMP} of polynomial optimization. (Note that without any additional assumptions, the problem \cite{Gitta2024} is not computable in models of computation more powerful than the Turing model.) 
In particular, one defines the moment matrix as follows
\begin{align*}
M_n(\mathbf{y})
:=
\left(
\int_{\mathbb{R}} x^i x^j \, d\mu(x)
\right)_{i,j=0}^{n}
\;\in\;
\mathbb{R}^{(n+1)\times(n+1)},
\end{align*}
where $n \in \mathbb{N}$ is called the order of the moment matrix $M_n(\mathbf{y})$. Curto and Fialkow \cite{CurtoFialkow1998} then show: 
\begin{theorem}[Flat extension]
\label{thm:flat-extension} 
A measure $\mu$ is finitely atomic if and only if the moment matrix corresponding to $\mu$ satisfies the flatness condition
\begin{equation}\label{eq:flat-extension-prop}
\rank M_n(\mathbf{y})
=
\rank M_{n+1}(\mathbf{y})
= r .
\end{equation}
for some $d, r\in \mathbb{N}$.
\end{theorem}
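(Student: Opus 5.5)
The plan is to prove both directions with the standard Hankel/Gram picture, where $M_n(\mathbf{y})$ is the Gram matrix of the monomials $1,x,\dots,x^n$ in $L^2(\mu)$. Concretely, for $p=\sum_{i\le n} p_i x^i$ we have $p^\top M_n(\mathbf{y})p=\int p^2\,d\mu$. So $\ker M_n(\mathbf{y})$ consists exactly of those polynomials of degree $\le n$ that vanish $\mu$-a.e., i.e.\ vanish on $\supp\mu$, and $\rank M_n(\mathbf{y})$ is the dimension of the image of $\mathcal{P}_n$ in $L^2(\mu)$. The condition is read as: there exist $n,r$ with $\rank M_n(\mathbf{y})=\rank M_{n+1}(\mathbf{y})=r$.

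\emph{Finitely atomic $\Rightarrow$ flat.} Suppose $\mu=\sum_{k=1}^r w_k\delta_{x_k}$ with distinct $x_k$ and $w_k>0$. Then $L^2(\mu)\cong\mathbb{R}^r$ via evaluation at the atoms. For $n\ge r-1$, the $(r)\times(n+1)$ Vandermonde matrix $V_n=(x_k^i)$ has rank $r$, because the $x_k$ are distinct. Since $M_n(\mathbf{y})=V_n^\top W V_n$ with $W=\operatorname{diag}(w_k)$ positive definite, $\rank M_n(\mathbf{y})=r$ for all $n\ge r-1$. Taking $n=r-1$ gives the flatness condition.

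\emph{Flat $\Rightarrow$ finitely atomic.} Suppose $\rank M_n(\mathbf{y})=\rank M_{n+1}(\mathbf{y})=r$. The image of $\mathcal{P}_{n+1}$ in $L^2(\mu)$ then has the same dimension as the image of $\mathcal{P}_n$. In particular, $x^{n+1}$ coincides in $L^2(\mu)$ with some polynomial $q$ of degree $\le n$. Hence $p(x)=x^{n+1}-q(x)$, a nonzero polynomial of degree $n+1$, satisfies $\int p^2\,d\mu=0$. So $p=0$ $\mu$-a.e., and since $p$ is continuous it vanishes on $\supp\mu$. Therefore $\supp\mu$ is contained in the at most $n+1$ real roots of $p$, so $\mu$ is finitely atomic. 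The dimension count $\dim L^2(\mu)=\#\supp\mu$, combined with $\rank M_m=\#\supp\mu$ for large $m$ from the first part, shows that the number of atoms is exactly $r$.

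The only delicate point is what the statement quantifies over. Since $\mu$ is a genuine measure and all moments exist, we never have to worry about whether a flat extension exists; the kernel argument above applies directly. If one wants the truncated version, where only $M_{n+1}(\mathbf{y})$ is given and one must construct $\mu$, the main obstacle is showing that the recursively generated extension stays flat and positive semidefinite. For that I would cite Curto--Fialkow \cite{CurtoFialkow1998} rather than reprove it. Alternatively, one can build the $r$-dimensional GNS quotient, on which multiplication by $x$ is well defined by flatness, and take its spectral measure applied to the constant polynomial $1$. In the present setting, with $\mu$ given, the elementary argument suffices.
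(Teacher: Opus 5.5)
Your argument is correct, but it does not follow the paper's route. The paper gives no proof and simply attributes the statement to Curto and Fialkow \cite{CurtoFialkow1998}. Their flat extension theorem concerns truncated data: a positive semidefinite $M_n(\mathbf{y})$ with a rank-preserving positive semidefinite extension $M_{n+1}(\mathbf{y})$ has a representing measure, and that measure is unique and $\rank M_n(\mathbf{y})$-atomic. The hard part there is \emph{constructing} the measure, either through recursively generated flat extensions or through the finite-dimensional GNS quotient on which multiplication by $x$ is well defined.

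You instead use the fact that the statement, as written, presupposes a genuine measure $\mu$ with all moments. Then $M_n(\mathbf{y})$ is the Gram matrix of $1,x,\dots,x^n$ in $L^2(\mu)$. A rank tie therefore gives a monic $p$ of degree $n+1$ with $\int p^2\,d\mu=0$, which confines $\mu$ to the zeros of $p$. The converse follows from the factorization $M_n(\mathbf{y})=V_n^\top W V_n$.

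Your route is self-contained and fully sufficient for the theorem as stated. It also covers the flatness test in Algorithm~\ref{alg:atom_extraction_theoretic}, where the moments are assumed to come from a measure. The cited result buys more: existence and uniqueness of a representing measure from finitely many pseudo-moments. That is what one needs when $M_n(\mathbf{y})$ comes out of a moment-SOS relaxation and no measure is known a priori.

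One small tightening: to show that the number $s$ of atoms equals $r$, do not appeal to ``large $m$''. Note instead that $s\le\deg p=n+1$, so $n\ge s-1$, and your first part then gives $\rank M_n(\mathbf{y})=s$ directly.
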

This theorem allows us to determine conclusively, whether the underlying measure is finitely atomic, which (assuming the model of computation of Blum, Schub and Smale, \cite{blum1989theory}) permits exact recovery of the measure from finite moment data using the GNS construction \citep{LopezQuijorna2021DetectingGNS}. From the onset, it is not clear how to even formulate such a recovery problem when the measure is not finitely atomic.

\section{Problem definition}
\label{sec:probdef}

It is no surprise that in practice we sometimes encounter measures, which are not finitely atomic but still need to be recovered. These measures might not be purely continuous either, which prevents the use of many of the well crafted methods that deal with compact support continuous measures mentioned in the introduction. We define a well posed problem that may be solved in finite time with guarantees which leads to the recovery of such measures. Before we define the problem, we succinctly formulate the assumptions on the measure.

\begin{definition}[Support of a measure]
Let $\mu$ be a Borel measure
on $\R$, then its support is defined as
\begin{align*}
    \supp \mu = \{ x \in \R : \mu(U) >0 \text{ for every open } U \ni x \}.
\end{align*}
\end{definition}

\begin{assumption}\label{as:measure_base}
Let $\mu$ be the Borel probability measure on $\R$ one wishes to recover.
The measure $\mu$ has a canonical decomposition by the Lebesgue decomposition theorem which reads $\mu = \mu_{pp} + \mu_{sc} + \mu_{ac}$, where $\mu_{pp}$, $\mu_{sc}$ and $\mu_{ac}$ are the pure point (or atomic), singularly continuous and absolutely continuous parts of the measure, respectively. We assume $\mu$ has the following properties:
\begin{itemize}
    \item $\mu$ is supported on a compact set classifying the recovery problem as a Hausdorff moment problem. The bound on the support of $\mu$ will be labeled $B > 0$.
    \item $\mu_{sc} = 0$, which implies that
    \begin{equation}\label{eq:measure-form}
        \mu = \mu_{pp} + \mu_{ac},
    \end{equation}

\item 
the support of the continuous part consists of $m$ disjoint intervals
\begin{align}
\label{eq:connected_component_interval}
    \supp \mu_{ac} = [a_1,b_1] \cup [a_2,b_2] \cup ... \cup [a_m,b_m],
\end{align}

\item 
the support of the pure point (atomic) part consists of $r \in \mathbb{N}_0$ points
\begin{align*}
\label{eq:connected_component_atom}
    \supp \mu_{pp} = \{x_1,x_2,...,x_r\},
\end{align*}

\item the supports of $\mu_{pp}$ and $\mu_{ac}$ are disjoint,
\item 
the density satisfies
\begin{align*}
    d \mu_{ac}(x) \geq c_0 >0, \quad x \in \supp \mu_{ac}.
\end{align*}
\end{itemize}
\end{assumption}

The following, additional assumptions on the measure $\mu$ significantly simplify our approach to the moment problem as we will see later.

\begin{assumption}
\label{as:atoms_outside}
    Let $\mu$ satisfy Assumption \ref{as:measure_base}.
    We additionally assume that the $r$ atoms lie outside the convex hull of the set supporting the continuous part, i.e.
    \begin{align*}
        \supp \mu_{pp} \cap \mathrm{conv hull}(\supp \mu_{ac}) = \emptyset.
    \end{align*}
\end{assumption}

\begin{assumption}
\label{as:single_interval}
    Let $\mu$ satisfy Assumption \ref{as:measure_base}.
    We additionally assume that the continuous part $\mu_{ac}$ is supported on a single interval,
    \begin{align*}
        \supp \mu_{ac} = [a,b], \quad -\infty < a < b < +\infty.
    \end{align*}
\end{assumption}

\begin{remark}
Since the measure has compact support, Carleman's condition is satisfied apriori. This ensures that the moment sequence generated by $\mu$ which satisfies \ref{as:measure_base} is associated with a unique measure.
\end{remark}

\begin{definition}\label{def:paddedSpacing}
Let $\mu$ be a measure satisfying Assumption \ref{as:measure_base}.
The characteristic separation distance $\Delta>0$ of $\mu$ is the minimum of mutual distances between all the connected components (\ref{eq:connected_component_interval}), (\ref{eq:connected_component_atom}) of $\supp \mu$.
\end{definition}
Notice in particular, that $\Delta$ is always well defined for measures satisfying Assumption \ref{as:measure_base}. The problem that we propose is the following. 
\begin{problem}\label{prob:reconstr}
Let a measure $\mu$ be such that:
\begin{itemize}
    \item Assumption \ref{as:measure_base} holds with support bound $B > 0$,
    \item $\mu$ has a characteristic separation distance $\Delta > 0$.
\end{itemize}
Let $\varepsilon > 0$, $2\varepsilon<\Delta$, be desired maximal error of reconstruction and let $d_H$ denote Hausdorff distance. 
We say that the triple $(A, A_{pp}, A_{ac})$ solves the reconstruction problem up to  $\varepsilon$ precision if
\begin{itemize}
    \item $A_{pp}, A_{ac}$ were obtained using only finite, although arbitrarily large amount of moments.
    \item $A=A_{pp}\cup A_{ac}$,
    \item (precision condition) $d_H(\supp \mu,A) < \varepsilon$, where $d_H$ is Hausdorff distance,
    \item (localization condition) for any atom $a$ of the measure $\mu$, $A_{pp}\cap B(a,\varepsilon)$ has exactly one element and $A_{ac}\subset \supp \mu_{ac} + B(0, \varepsilon)$.
\end{itemize}

\begin{remark}
    Let us note that in Problem \ref{prob:reconstr}, we assume only the knowledge of the moments and whether Assumptions \ref{as:measure_base}, \ref{as:atoms_outside} or \ref{as:single_interval} hold.
    Generally, we do not need to know the number $m$ of continuous parts and the number of roots $r$ in advance.
    Although, practically, having this information in advance might simplify the problem.
\end{remark}

\end{problem}
The reconstruction problem, as defined above has several key features. First of all it allows us to define conditions under which we can reconstruct the measure without assuming anything about the structure of the support (number of continuous parts or atoms). Second, the solution of the problem is not only a discrete set of points that approximate the support, but also information that allows us to localize the discrete and continuous parts of the support.


\section{Our results}
\label{sec:results}
All of our efforts culminate in a polynomial-time algorithm for solving
Problem~\ref{prob:reconstr}. Assume that we are given a sequence of moment
matrices \(\{M_n(\mathbf{y})\}_n\) generated by a measure \(\mu\) satisfying
Assumption~\ref{as:measure_base}, with \(\mu_{ac}\neq 0\). Since such a
measure is rather general and not finitely atomic, we adopt an
asymptotic approach.

We consider three levels of assumptions on \(\mu\), ranging from the most
general to the most restrictive (Assumptions \ref{as:measure_base} - \ref{as:single_interval}). The more general the measure is, the more
difficult the extraction problem becomes. 
In each case, we apply the same basic algorithmic framework, augmented by additional steps needed to handle the extra generality of the measure.

In Section~\ref{sec:our_results_OGPoly}, we introduce our results on orthogonal
polynomials. In Section~\ref{sec:deriving_algorithm}, we explain how these theoretical results lead
to a construction that solves Problem~\ref{prob:reconstr} under the respective
assumptions on \(\mu\). Finally, in Section~\ref{Sec:Algorithms}, we present the resulting
algorithm and prove that, under the stated assumptions, its runtime is polynomial.

\subsection{Our results on orthogonal polynomials}
\label{sec:our_results_OGPoly}
\begin{definition}[Monic orthogonal polynomials]
\label{def:OGPoly}
    Suppose $\mu$ satisfies Assumption \ref{as:measure_base}.
    Then, we define the set of monic  polynomials $\{P_n\}_{n\geq0}$ orthogonal with respect to the measure $\mu$, which is uniquely given by the conditions:
    \begin{align*}
        \deg P_n= n, \\
        \text{the coefficient of } x^n \text{ in } P_n \text{ is } 1, \\
        \int P_n(x)P_m(x) d\mu(x) = K_n \delta_{m,n}.
    \end{align*}
    \end{definition}
\begin{definition}[Root sequence] \label{def:root_sequence}
    Suppose $\mu$ is a measure $\mu$ as specified above.
    Define the set of roots of the orthogonal polynomials with respect to  $\mu$ as $\mathcal{R}_{n}=\left\{ x\in\mathbb{R}:P_{n}\left(x\right)=0\right\}$.
Then, we call any sequence $(x_{n})_{n\geq n_{0}}$, where $n_0 \in \mathbb{N}$ such that $x_{n}\in \mathcal{R}_{n}$ for all $n \geq n_0$ a root sequence.
\end{definition}

To analyze the structure of the measure $\mu$ and extract information about its support, it is natural to consider the associated monic orthogonal polynomials. 
Assuming that $\mu$ has infinite support, the orthogonalization procedure does not terminate, and thus a full sequence of orthogonal polynomials is available.
We propose to  analyze the zeros of these polynomials (i.e. root sequences \ref{def:root_sequence}).
We show that the zeros exhibit convenient asymptotic behavior which can reveal the atoms of $\mu_{pp}$ and the support of the continuous part $\supp \mu_{ac}$.
The following result is an informal version of Theorems \ref{lemma:atom_convergence}, \ref{thm:distinguish}, \ref{thm:distinguish_outside}, \ref{thm:distinguish_general}, which encapsulate the ideas of how we treat the atomic and the continuous part, respectively.

\begin{theorem}[Asymptotic behavior of roots, informal] 
\label{thm:informal_spectral_inclusiveness}

Suppose $\mu$  is a measure satisfying Assumption \ref{as:measure_base}, 
\begin{align*}
    \supp \mu_{ac} = \cup_{j=1}^m  [a_j,b_j] \not= \emptyset, \quad \supp \mu_{pp} = \{x_1,x_2,...,x_r\}, \quad r \in \mathbb{N}_0.
\end{align*}
Then the following statements hold:
\begin{itemize}
    \item The flatness condition (\ref{eq:flat-extension-prop}) is never satisfied.
    \item Given $\varepsilon>0$, there exists a sufficiently large $n_0$ such that for arbitrary $n\geq n_0$, the roots of $P_n$ have the following properties:
    \begin{itemize}
        \item We observe precisely $m$ bulks of at least three $\varepsilon$-close roots $\varepsilon$-accurately approximating  $\supp \mu_{ac}$.
        \item We observe anywhere from $r$ up to $r+m-1$ $\varepsilon$-isolated roots.
        \item From the isolated roots, we can identify the $r$ atoms $\varepsilon$-accurately approximating $\supp \mu_{pp}$ by inspecting $P_{n+1}$.
    \end{itemize}
\end{itemize}
If $\mu$ additionally satisfies Assumption \ref{as:atoms_outside}:
\begin{itemize}
    \item If $n \geq n_0$, we can decide whether an isolated root is a polluting root by mere inspection of $P_n$ (there is no need to analyze $P_{n+1}$).
\end{itemize}
Moreover, if $\mu$ in addition satisfies Assumption \ref{as:single_interval}:
\begin{itemize}
    \item 
    Given $\varepsilon>0$, for $n \geq n_0 = O(1/\varepsilon)$, within the roots of $P_n$, we have one bulk of $\varepsilon$-close roots, which $\varepsilon$ accurately approximates $\supp \mu_{ac}$.
    Moreover, there are $r$   $\varepsilon$-isolated root and they   $\varepsilon$ accurately approximate their corresponding atoms.
    \item For large enough $n=O(1/\varepsilon)$, all roots of $P_n$ fall in $\supp \mu + B(0,\varepsilon)$.
    \item  Every atom has a unique root sequence associated with it.
    This root sequence converges to the atom exponentially.
\end{itemize}
\end{theorem}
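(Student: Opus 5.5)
The plan is to treat the informal theorem as a collection of separate claims and prove each with a standard tool from orthogonal polynomial theory, working through the Jacobi matrix $J_n$ (the compression of $M_x$ to polynomials of degree $<n$), whose eigenvalues are exactly $\mathcal{R}_n$.

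\emph{Flatness never holds.} Since $\mu_{ac}\neq 0$, the support of $\mu$ is infinite. Hence no nonzero polynomial vanishes $\mu$-a.e., and $M_n(\mathbf{y})$ is positive definite of rank $n+1$ for every $n$. So $\rank M_n<\rank M_{n+1}$, and Theorem \ref{thm:flat-extension} gives the claim directly.

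\emph{Zero counting in the general case.} I would use three classical facts.
\begin{enumerate}
\item The roots of $P_n$ are simple, lie in $\mathrm{conv\,hull}(\supp\mu)$, and interlace with those of $P_{n+1}$.
\item Every gap $(c,d)$ of $\supp\mu$ contains at most one root of $P_n$. This follows from Gauss quadrature: if two roots lay in the gap, the polynomial $P_n(x)^2/((x-\xi_1)(x-\xi_2))$ would be nonnegative on $\supp\mu$ yet integrate against $P_n$ to zero, a contradiction.
\item Every atom $x_k$ attracts a root. The Christoffel function $\lambda_n(x_k)\to\mu(\{x_k\})>0$, and the Gauss weights are at most $\mu$ of a neighbourhood. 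So a neighbourhood $B(x_k,\varepsilon)$ of positive mass must contain a node for large $n$, and by fact 2 it contains only one.
\end{enumerate}
For the continuous bulks, the density bound $d\mu_{ac}\ge c_0$ on $[a_j,b_j]$ makes $\mu$ regular in the sense of Stahl--Totik (Erd\H{o}s--Tur\'an type). The zero counting measures then converge weakly to the equilibrium measure of $\supp\mu$, which charges every subinterval of every $[a_j,b_j]$. Consequently each interval is $\varepsilon$-densely filled, giving $m$ bulks of at least three close roots.

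The isolated roots are then the $r$ roots near atoms plus at most one root per gap. Only $m-1$ gaps separate intervals without an atom between them, which gives the count $r$ to $r+m-1$. To separate atom roots from polluting ones via $P_{n+1}$, note that atom roots converge, so $P_{n+1}$ again has a root within $\varepsilon$. A polluting root in an interval gap, by contrast, generically moves, following the quasi-periodic (Widom) behaviour. Here I would use that the Christoffel weight at a polluting root tends to $0$ while at an atom it stays near $\mu(\{x_k\})$; this weight is computable from $P_n$, $P_{n+1}$ and the recurrence. This step, making pollution detection rigorous, is the main obstacle. I would prove it via the weight criterion rather than via root motion.

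\emph{Under Assumption \ref{as:atoms_outside}.} Gaps between intervals contain no atoms, so any isolated root lying between two bulks is polluting, and roots outside the convex hull of the bulks are atom roots. This is decidable from $P_n$ alone.

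\emph{Under Assumption \ref{as:single_interval}.} There are no interior gaps of $\supp\mu_{ac}$, so there are no polluting roots. For the rate, I would use that $\mu$ is then a finite-rank perturbation of a Szeg\H{o}-class measure on $[a,b]$ in the sense of the Nevai class. Each atom outside $[a,b]$ gives a discrete eigenvalue of the limiting Jacobi operator. The eigenvector decays geometrically, with ratio $\rho=|\phi(x_k)|^{-1}<1$, where $\phi$ is the conformal map of the exterior of $[a,b]$. This gives $|x_k-\xi_n|=O(\rho^{2n})$ for the unique root sequence near $x_k$. Uniqueness follows from the at-most-one-root-per-gap fact above.

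The $O(1/\varepsilon)$ bound on $n_0$ for the bulk and for the claim that all roots lie in $\supp\mu+B(0,\varepsilon)$ comes from two ingredients. First, the zero spacing inside $[a,b]$ is $O(1/n)$, by Markov--Stieltjes inequalities using $c_0$. Second, spurious roots are excluded from $(x_k-\text{gap})$ regions beyond the exponentially converging ones.
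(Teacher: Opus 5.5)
Your flatness argument (every $M_n$ is positive definite) is a legitimate alternative to the paper's. So is filling each $[a_j,b_j]$ via regularity and weak convergence of the zero-counting measures: it is more elementary than the paper's Theorem~\ref{thm:linear_spacing}, though it gives no rate. Your single-interval route to exponential convergence (Nevai class via Denisov--Rakhmanov, not a finite-rank perturbation; geometric decay of $(\tilde P_k(x_k))_k$; then a Kato--Temple bound) is also a valid replacement for Gonchar's Pad\'e theorem.

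\textbf{The uniqueness half of fact~3 fails.} $B(x_k,\varepsilon)$ is not a gap of $\supp\mu$, since it contains $x_k$. So fact~2 only gives at most one zero on \emph{each side} of the atom, and two zeros can straddle it. This does happen. Take constant density on $[-1,-\tfrac12]\cup[\tfrac12,1]$ plus an atom at $0$. By symmetry $P_{2k}(x)=Q_k(x^2)$, where the $Q_k$ are orthogonal for the push-forward under $x\mapsto x^2$; that measure has an atom at $0$ and a.c.\ part on $[\tfrac14,1]$. By Theorem~\ref{lemma:atom_convergence}, $Q_k$ has a single zero $t_k\in(0,\tfrac14)$ with $t_k\to 0$. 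So $P_{2k}$ has the $\varepsilon$-close pair $\pm\sqrt{t_k}$, and for large even $n$ there is no $\varepsilon$-isolated root at all.

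Hence the lower bound $r$ in the general-case count is false, not merely unproved. The paper's formal Theorem~\ref{thm:distinguish_general} accordingly has a separate clause: a root with exactly one $\delta$-close neighbour signals an atom near the midpoint.

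Under Assumptions~\ref{as:atoms_outside} and \ref{as:single_interval} uniqueness is true, but it needs a sharper count than fact~2. The number of zeros beyond $\max\supp\mu_{ac}$ is at most the number of atoms there. You can get this by testing orthogonality against $P_n\prod_i(x-x_i)/\prod_j(x-\xi_j)$, or by min--max for the compression $J_n$ of $J$. For one interval the paper gets it from the counting part of Theorem~\ref{thm:Gonchar_stuff}. Your Kato--Temple step needs the same count, to have a spectral gap of $J_n$ at $x_k$. Likewise, the correct pollution count is that each bounded component of $\R\setminus\supp\mu_{ac}$ holds at most one more zero than it has atoms, not ``one per atom-free gap''.

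\textbf{The Christoffel-weight criterion does not deliver identification.} The dichotomy is true asymptotically. On a compact $K$ at positive distance from $\supp\mu$, $\lambda_n\downarrow 0$ uniformly by Dini, while a lone node near $x_k$ carries weight tending to $\mu(\{x_k\})$. But classifying a root at a fixed $n$ needs a threshold between $\sup_K\lambda_n$ and $\min_k\mu(\{x_k\})$, and neither is known or quantified.

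The paper's key lemma here is Denisov--Simon (Theorem~\ref{thm:pollution}). If $\dist(x_0,\supp\mu)=d$, then $\tilde P_n$ and $\tilde P_{n+1}$ cannot both have zeros in $(x_0-\delta_\infty,x_0+\delta_\infty)$, where $\delta_\infty=d^2/(d+\sqrt{2}\,a_\infty)$. This turns your ``root motion'' intuition into a test with the explicit radius $\rho=\delta^2/(\delta+\sqrt{2}\,a_\infty)$, which depends only on $\delta$ and the recurrence-coefficient bound. It is exactly what ``inspecting $P_{n+1}$'' means: an atom root has a $\rho$-close zero of $P_{n+1}$, and a polluting root does not.

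\textbf{The $O(1/n)$ spacing does not follow from $c_0$ via Markov--Stieltjes.} That inequality gives $c_0(x_{k+1,n}-x_{k,n})\le\lambda_{k,n}+\lambda_{k+1,n}$. You would then need $\lambda_{k,n}=O(1/n)$, i.e.\ an upper bound on $\mu$ near the nodes, which Assumption~\ref{as:measure_base} does not provide. The paper imports the $1/n$ spacing from Theorem~\ref{thm:linear_spacing} instead.
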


\subsection{Arriving at an algorithm}
\label{sec:deriving_algorithm}
Our aim is to develop an algorithm that solves Problem \ref{prob:reconstr}.
We expect that the structure of the underlying measure $\mu$ dramatically affects the difficulty of the problem.
Theorem \ref{thm:informal_spectral_inclusiveness} confirms this expectation - in the case of Assumption \ref{as:single_interval}, when we only have one continuous component in $\mu_{ac}$, we have the strongest theoretical result.
Given $\varepsilon>0$, we have two types of roots of $P_n$ if $n\geq n_0 = O(1/\varepsilon)$:
\begin{itemize}
    \item one bulk of $\varepsilon$-close roots $(y_j)_{j=1}^{n-r}$,
    \item precisely $r$ $\varepsilon$-isolated roots $(z_i)_{i=1}^r$.
\end{itemize}  
These two parts approximate $\supp \mu$ $\varepsilon$-accurately.
Then, we can solve Problem \ref{prob:reconstr} by setting \begin{align*}
    A_{ac} &= [ \min_j(y_j)_{j=1}^{n-r}, \max_j(y_j)_{j=1}^{n-r}] , 
    \\
    A_{pp} &= (z_i)_{i=1}^r.
\end{align*} 

If we weaken the assumptions on $\mu$ to satisfy Assumption \ref{as:atoms_outside} (atoms lie outside convex hull of $\supp \mu_{ac}$), it still holds that we can solve Problem \ref{prob:reconstr} by computing roots of $P_n$ for some large $n$.
However, in this case, we do not have a provable bound for $n$ in terms of $\varepsilon$.
Nevertheless, we observe $m$ bulks or clusters of  roots that are $\varepsilon>0$ close.
These must be associated with the continuous part.
Moreover, an $\varepsilon$-isolated root either lies outside of $\mathrm{convhull } \ \supp \mu_{ac}$ and we are guaranteed it is an $\varepsilon$-accurate approximation of an atom or the isolated root lies inside $\mathrm{convhull } \ \supp \mu_{ac}$ and is necessary a pollution since we assume no atoms inside $\mathrm{convhull } \ \supp \mu_{ac}$.
We can then solve Problem \ref{prob:reconstr} in a similar manner as above. 
Computing the minimum and maximum of the $m$ bulks will give us a valid approximation of $\supp\mu_{ac}$ and the $r$ roots outside these bulks will be a valid approximation of $\supp \mu_{pp}$.

If we further weaken the assumptions to $\mu$ satisfying only Assumption \ref{as:measure_base},
we still observe $m$ bulks of $\varepsilon$-close roots but the ruling out of pollution is more delicate.
We have at most $r+m-1$ candidates for $r$ atoms and in order to be able to distinguish the pollution from the atomic roots, we need to inspect roots of two consecutive orthogonal polynomials.

\subsection{Algorithms leveraging the theory}
\label{Sec:Algorithms}

Let $\mu$ satisfy Assumption \ref{as:measure_base}.
Suppose we have a sequence of moments $\{M_n(\mathbf{y}))\}_{n}$ generated by $\mu$.
Our goal now is to locate the atomic part of the measure $\mu_{pp}$ and the support of the continuous part $\mu_{ac}$.

Algorithm \ref{alg:atom_extraction_theoretic}  produces the desired approximation of $\supp \mu$.
The key steps are  building the monic polynomials orthogonal \ref{def:OGPoly} with respect to the measure $\mu$, computing their zeros and then analyzing them.

First, we describe the algorithm $\texttt{roots}$ that ensures the computation of the zeros of orthogonal polynomials from the moment sequence. The procedure is known in the literature as discretized Stieltjes procedure \cite{gautschi_orthogonal_1985}, which is based on the observation that the three term recurrence relation (see equation (\ref{eq:three_term}) in Section \ref{Sec:orthogonal_polynomials}) can be computed directly if the integration oracle of a measure $\mu$ is available (as explained below). 
\begin{algorithm}
\caption{ $\texttt{roots}$: Computation of roots of orthogonal polynomials from an integration oracle}
\label{alg:roots_from_moments}
\begin{algorithmic}[1]
\Require A maximum degree $n$, an integration oracle accurate up to degree $2n-1$.
\Ensure List $ \mathcal{R}$ of roots of the $n$-th monic orthogonal polynomial $P_n$. (\ref{def:OGPoly}).
\State Initialize polynomial $P=1,Q=0$.
\For{$j = 0$ to $n-1$}
    \State $\zeta_j = \langle P,P\rangle_\mu$, computed with oracle.
    \State $\alpha_j = \frac{\langle xP,P\rangle_\mu}{\zeta_j}$, computed with oracle.
    \If{$j\geq 1$}
        \State $\beta_j = \frac{\zeta_j}{\zeta_{j-1}}$.
    \EndIf
    \State Update polynomials $(P,Q)\leftarrow ((x-\alpha_j)P-\beta_j Q,P)$.
    \State Build the tridiagonal Jacobi matrix $J_n$ (see Definition \ref{def:Jacobi_matrix}).
\EndFor
\State compute and \textbf{return} the list of eigenvalues of $J_n$, i.e. the roots of polynomial $P_n$.
\end{algorithmic}
\end{algorithm}

Let $\langle p,q\rangle_{\mu}=\int p(x)q(x)\mathrm{d}\mu$ denote the inner product induced by the measure $\mu$, we compute the three term recurrence coefficients by
\begin{align}
\label{eq:recurrence_alpha}
    \alpha_i&=\frac{\langle xP_i, P_i\rangle_{\mu}}{\langle P_i, P_i\rangle_{\mu}},\forall i=0,\dots,n-1,\\
\label{eq:recurrence_beta}
    \beta_i&=\frac{\langle P_i,P_i\rangle_{\mu}}{\langle P_{i-1},P_{i-1}\rangle_{\mu}},\forall i=1,\dots,n-1,
\end{align}
where the integration of polynomials is allowed by the moment matrix $M_n$ as
\begin{align*}
    \langle p,q\rangle_{\mu} = \vec{p}^T M_n \vec{q}, \quad \forall p,q \in \R[x]_n.
\end{align*}
Since by convention $P_{-1}=0$, the value of $\beta_0$ is unimportant.
Once the sequences $\{\alpha_i\}_{i=0,\dots,n-1}$ and $\{\beta_i\}_{i=1,\dots,n-1}$ are built, they determine the tridiagonal Jacobi matrix $J_n$ corresponding to $\mu$ (see Definition (\ref{def:Jacobi_matrix})).
The eigenvalues of this matrix $J_n$ are exactly the roots of the polynomial $P_n$ (see Theorem \ref{thm:spectrum+zeros}).

Below, we present the algorithm 
$\texttt{SupLoc}$, which is precisely the aforementioned step of analyzing the computed zeros.
The guiding idea is based on Theorem \ref{thm:informal_spectral_inclusiveness} (which is an informal version of Theorems \ref{thm:distinguish}, \ref{thm:distinguish_outside}, \ref{thm:distinguish_general}) and explained in Section \ref{sec:deriving_algorithm}.
The theoretical guarantees for Algorithm \ref{alg:atom_extraction_theoretic} are encapsulated in Theorem \ref{thm:algorithm_proof}.

\begin{algorithm}
\caption{$\texttt{SupLoc}$: Estimate atoms and the interval supporting the continuous part of the measure from its moments }
\label{alg:atom_extraction_theoretic}
\begin{algorithmic}[1]

\Require Desired precision $\varepsilon>0$.
\Require Moment matrix $ M_{N}$, $N$ depends on the assumptions.

\Assumption
Let the moments come from a measure $\mu$ satisfying Assumption \ref{as:measure_base}.
\Assumption \textbf{Additionally,} the measure $\mu$ also satisfies Assumption \ref{as:single_interval} or Assumption \ref{as:atoms_outside}.

\Ensure The location of the discrete part $\mu_{pp}$ and inner approximation of the interval supporting $\mu_{ac}$.

\For{$n=1,2,...,N$}
    \If{Flatness condition $\rank M_n(\mathbf{y})
=
\rank M_{n+1}(\mathbf{y})$ holds}
        \State 
        Necessarily, $\supp \mu_{ac} = \emptyset$.
        \State Perform the truncated GNS construction to obtain the atoms $\supp \mu_{pp}$.
        \State
        \Return $\supp \mu_{pp}$
    \EndIf
\EndFor

\State
Initialize containers:
\begin{align*}
\mathcal A_N \gets \varnothing, \qquad 
\mathcal I_N \gets \varnothing \qquad \text{(corresponding to atoms and intervals, respectively)}.
\end{align*}

\State Use algorithm \ref{alg:roots_from_moments} $\texttt{roots}$  to obtain the zeros $\mathcal{R}_{N}$ of the orthogonal polynomial $P_{N}$ from the moment matrix $M_{N}$.

\For{$r \in \mathcal{R}_{N}$}
    \If{$\exists \tilde{r} \in \mathcal{R}_N : |r - \tilde{r}| < \varepsilon$}
        \State Append $r$ to $\mathcal{I}_N$
    \Else
        \State Append $r$ to $\mathcal{A}_N$
    \EndIf
\EndFor

\State
Sort the roots in $\mathcal{I}_N$ to be $y_1 <y_2 <...<y_{m-1}<y_m $.

\State
Set the number of continuous parts $k\gets1$
and
$a_{1} \gets y_1$.

\If{Assumption \ref{as:single_interval} holds}
\State
Set  $\ b_1 \gets y_m$
\State
\Return the atoms $\mathcal{A}_N$ with the interval $[a_1,b_1]$

\Else{ (Assumption \ref{as:single_interval} does not hold)}

\For{$j \in \{2,3,...,m-1\}$}
    \If{$|y_{j+1} - y_{j}| \geq \varepsilon$}
    \State 
    $b_{N_k} \gets y_j$.
    
        \State $k \gets k+1$.
        \State
        $a_{N_{k}} \gets y_{j+1}$.
    \EndIf
\EndFor
\State 
$b_{N_k} \gets y_m$.

\If
{Assumption \ref{as:atoms_outside} holds}

\State \Return the atoms $\mathcal{A}_N$ together with the $k$ intervals $[a_{N_1},b_{N_1}], [a_{N_2},b_{N_2}],...,[a_{N_k},b_{N_k}]$ approximating the $\supp \mu_{ac}$.

\Else{ (none of Assumptions \ref{as:atoms_outside}, \ref{as:single_interval} hold)}
\State
Run steps 7-24 for $ M_{N+1}$
to obtain $\mathcal{A}_{N+1}$.

\State
Use the $\rho = \varepsilon^2/(\varepsilon+\sqrt2 a_\infty)$ separation criterion from Theorem~\ref{thm:distinguish_general} to filter out the pollution.
\For{$r \in \mathcal{A}_{N}$}
    \If{$\exists \tilde{r} \in \mathcal{A}_{N+1} : |r - \tilde{r}| < \rho$}
        \State Keep $r$ in $ \mathcal{A}_N$
    \Else
        \State Discard $r$.
    \EndIf
\EndFor

\State
\Return $\ \mathcal{A}_N, [a_{N_1},b_{N_1}], [a_{N_2},b_{N_2}],...,[a_{N_k},b_{N_k}]$ approximating the $\supp \mu$.

\EndIf
\EndIf
\end{algorithmic}
\end{algorithm}

\begin{theorem} \label{thm:algorithm_proof}
    Suppose we have a measure $\mu$ satisfying Assumption \ref{as:measure_base} with the characteristic separation distance $\Delta >0$.
    Let us assume the BSS with a square root model of computation.
    Then, for any $\varepsilon>0 $, $0<2\varepsilon < \Delta $, 
    we can solve Problem \ref{prob:reconstr} using Algorithm \ref{alg:atom_extraction_theoretic}: 
    \begin{enumerate}
        \item \label{item:1interval} in a polynomial amount of operations $O(1/\varepsilon^3)$ from $M_N(\mathbf{y})$, where 
    $N= O( 1 / \varepsilon )$
    if $\mu $ additionaly satisfies Assumption \ref{as:single_interval}.
        \item  \label{item:atoms_outside}
        in a finite amount of  operations from $M_N(\mathbf{y})$, where 
    $N$ is large enough
    if $\mu $ additionaly satisfies Assumption \ref{as:atoms_outside}.
    
        \item \label{item:last}
         in a finite amount of  operations  from $M_N(\mathbf{y})$, $M_{N+1}(\mathbf{y})$ where 
    $N$ is large enough if neither Assumption
    \ref{as:atoms_outside} nor \ref{as:single_interval} are satisfied.
    \end{enumerate}
\end{theorem}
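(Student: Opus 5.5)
The proof is organized around the three items of the statement. In each case we check that Algorithm~\ref{alg:atom_extraction_theoretic} outputs a triple $(A,A_{pp},A_{ac})$ meeting the precision and localization conditions of Problem~\ref{prob:reconstr}, and then count operations in the BSS model with square roots. Two observations are shared by all cases.

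First, the initial flatness loop never returns when $\mu_{ac}\neq 0$, because flatness fails for such measures (first bullet of Theorem~\ref{thm:informal_spectral_inclusiveness}, made precise in the later formal theorems). If instead $\mu_{ac}=0$, Theorem~\ref{thm:flat-extension} together with the GNS extraction returns $\supp\mu_{pp}$ exactly, which trivially solves the problem.

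Second, Algorithm~\ref{alg:roots_from_moments} computes $J_N$ from $M_N$ with $O(N^2)$ operations per recurrence step, so $O(N^3)$ in total. By Theorem~\ref{thm:spectrum+zeros}, its eigenvalues are the roots of $P_N$. We treat the eigenvalue computation of a symmetric tridiagonal matrix as available in the BSS-with-square-root model to any fixed accuracy in $O(N^2)$ further operations. The clustering loop costs $O(N^2)$ comparisons and the sort $O(N\log N)$.

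\textbf{Item~\ref{item:1interval}.} Invoke the formal single-interval result, Theorem~\ref{thm:distinguish} together with Theorem~\ref{lemma:atom_convergence}. These give some $N=O(1/\varepsilon)$ with three properties:
\begin{itemize}
\item all roots of $P_N$ lie in $\supp\mu+B(0,\varepsilon)$;
\item the roots near $[a,b]$ form a single bulk of mutually $\varepsilon$-close roots whose extreme roots lie within $\varepsilon$ of $a$ and $b$, with consecutive gaps below $\varepsilon$;
\item each atom has exactly one $\varepsilon$-isolated root within $\varepsilon$ of it.
\end{itemize}
Since $2\varepsilon<\Delta$, the classification step places the bulk roots in $\mathcal I_N$ and the atom roots in $\mathcal A_N$, with no crossover. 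Then $A_{ac}=[y_1,y_m]$ lies in $[a,b]+B(0,\varepsilon)$ and is $\varepsilon$-Hausdorff close to $[a,b]$, and $A_{pp}=\mathcal A_N$ has exactly one point in each $B(x_i,\varepsilon)$. This gives both the precision and localization conditions. Substituting $N=O(1/\varepsilon)$ into the $O(N^3)$ cost gives $O(1/\varepsilon^3)$ operations.

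\textbf{Item~\ref{item:atoms_outside}.} Apply Theorem~\ref{thm:distinguish_outside} to obtain an $N$ with the following properties:
\begin{itemize}
\item there are exactly $m$ bulks, each $\varepsilon$-approximating some $[a_j,b_j]$ with internal gaps below $\varepsilon$, while gaps between bulks are at least $\Delta-2\varepsilon>0$;
\item the isolated roots outside $\mathrm{convhull}\,\supp\mu_{ac}$ correspond one-to-one to the atoms;
\item the at most $m-1$ polluting isolated roots lie inside the convex hull.
\end{itemize}
The polluting roots must be discarded; this can be done because Assumption~\ref{as:atoms_outside} guarantees that no atom lies in $[y_1,y_m]$, so any root of $\mathcal A_N$ in that range can be removed. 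The gap-splitting loop then recovers exactly the $k=m$ intervals, because between-bulk gaps exceed $\varepsilon$ once $\varepsilon<\Delta/2$, adjusted so that $\Delta-2\varepsilon\ge\varepsilon$ by shrinking $\varepsilon$ if needed. The Hausdorff and localization checks then go as in Item~\ref{item:1interval}. Since no rate is available, we only obtain finiteness of $N$, and hence finitely many operations.

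\textbf{Item~\ref{item:last}.} Theorem~\ref{thm:distinguish_general} supplies $N$ with the same bulk structure, but now with up to $r+m-1$ isolated roots in $\mathcal A_N$. It also provides the separation criterion: true atomic roots of $P_N$ and $P_{N+1}$ are within $\rho=\varepsilon^2/(\varepsilon+\sqrt2 a_\infty)$ of each other, while polluting roots have no $\rho$-close partner in $\mathcal A_{N+1}$. The filtering loop therefore keeps exactly one root per atom, and we conclude as before, with the cost of two runs of \texttt{roots}.

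The main obstacle is the interface between the asymptotic theorems and the algorithm's thresholds. The issue is whether every gap inside a bulk is $<\varepsilon$ while every atom root is $\varepsilon$-isolated, including from a nearby pollution root. We will handle this by running the theorems with $\varepsilon/3$ in place of $\varepsilon$ and using $2\varepsilon<\Delta$ to separate the classes.
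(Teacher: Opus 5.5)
Your Items~1 and~2 and the $O(N^3)$ operation count follow the paper's proof. You use the same three-case split and appeal to Theorems~\ref{thm:distinguish} and~\ref{thm:distinguish_outside}. Your explicit removal of roots of $\mathcal{A}_N$ lying in $[y_1,y_m]$ is what the paper does implicitly when it says such roots can be ruled out as pollution.

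\textbf{The gap is in Item~3.} You assume that every root of $P_N$ (and of $P_{N+1}$) approximating an atom is $\varepsilon$-isolated. You then conclude that it lands in $\mathcal{A}_N$ (resp.\ $\mathcal{A}_{N+1}$) and that the $\rho$-filter keeps exactly one root per atom. Theorem~\ref{thm:distinguish_general} does not give this. Its second clause explicitly covers an atom root with exactly one $\delta$-close neighbor, namely a polluting root in an adjacent gap. Your remedy (running the theorems at scale $\varepsilon/3$ and using $2\varepsilon<\Delta$) cannot exclude this case:
\begin{itemize}
\item $\Delta$ only separates components of $\supp\mu$;
\item Theorem~\ref{thm:pollution} says nothing at points of $\supp\mu$;
\item point~\ref{thm:measure_0_sets} of Theorem~\ref{thm:properties_og_poly} permits one zero on each side of an atom.
\end{itemize}

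\textbf{Counterexample.} Let $\mu$ have density $1/4$ on $[-2,-1]\cup[1,2]$ and mass $1/2$ at $0$, so $\Delta=1$, and take $\varepsilon<1/2$.
\begin{itemize}
\item \emph{Even $N$.} By symmetry $P_N$ is even, and since its zeros are simple, $P_N(0)\neq 0$. Point~\ref{thm:measure_0_sets} allows at most one zero in $(0,1)$, and point~\ref{thm:density_of_zeros} forces a zero in $[-\eta,\eta]$ for large $N$. Hence the only zeros in $(-1,1)$ are $\pm z_N$ with $z_N\to 0$. These two are $\varepsilon$-close to each other and far from all other roots. So neither enters $\mathcal{A}_N$, and the gap-splitting loop outputs the spurious interval $[-z_N,z_N]\not\subset\supp\mu_{ac}+B(0,\varepsilon)$. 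This violates the localization condition, and $A_{pp}\cap B(0,\varepsilon)=\varnothing$.
\item \emph{Odd $N$.} Candidates near $0$ are compared with $\mathcal{A}_{N+1}$. By the even case, $\mathcal{A}_{N+1}$ contains no point near $0$, so these candidates are discarded and again no atom is returned.
\end{itemize}

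The paper's proof of Item~3 avoids this by using the full classification in Theorem~\ref{thm:distinguish_general}:
\begin{itemize}
\item roots with at least two $\delta$-close neighbors belong to a bulk;
\item a mutually close pair with exactly one neighbor each signals an atom at its midpoint;
\item the $\rho$-test for isolated roots is made against all roots of $P_{N+1}$, not only $\mathcal{A}_{N+1}$.
\end{itemize}
To make Item~3 go through, you need to build this classification into your argument, and into the Item~3 branch of the algorithm.
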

\begin{proof}  
 We analyze the complexity in the unit-cost algebraic model over $\mathbb{R}$ (the Blum--Shub--Smale model).
    In this model, each arithmetic operation has unit cost.
    Let us note that when the measure is finitely atomic and the flat extension (\ref{eq:flat-extension-prop}) holds, the GNS gives us an exact location of the atoms with a number of operations that is cubic in the number of atoms.
    In what follows, we analyze the $\mu_{ac}\not = 0$ case, in which  (\ref{eq:flat-extension-prop}) never holds.

    The general approach is the same. Generate orthogonal polynomials from the moment matrices and compute their roots.
    Given a moment matrix $M_N$, we provide an overview of the main algebraic operations involved and their computational cost (see \cite{Demmel1997} for details).
    \begin{enumerate}
        \item Cholesky factorization of a $(N+1)\times(N+1)$ matrix: $O(N^3)$.
        \item Inversion of a triangular $(N+1)\times(N+1)$ matrix: $O(N^3)$.
        \item Computing $N$ recurrence coefficients 
        $\alpha_i, \beta_i, i =0,1,...,N-1$
        \ref{eq:recurrence_alpha}, 
        \ref{eq:recurrence_beta}: $O(N^2)$
        \item 
        Solve the eigenvalue problem for a $(N+1)\times(N+1)$ matrix $J_N$ \ref{eq:Jacobi_matrix}: $O(N^3)$.
        \item 
        Sort and compute the distance of neighboring roots in $\mathcal{R}_N$: $O(N \log N)$.
    \end{enumerate}
    Together, this yields $O(N^3)$ operations.
    Since $N= O(1/\varepsilon),$
    the total complexity is $O(1/\varepsilon^3)$.

    Ad \ref{item:1interval}:
    Due to Theorem \ref{thm:distinguish}, we know how large $N$ we need in order to identify the $\supp \mu_{ac}$ and $\supp \mu_{pp}$ from the roots of $P_N$ up to $\varepsilon$ precision.
    To satisfy the assumptions of Theorem \ref{thm:distinguish}, we put $\delta = \varepsilon$ and assume large enough $N= O(1/\varepsilon)$.
    We then proceed with the algorithm for such $M_N$.
    This yields $O(N^3)=O(1/\varepsilon^3)$ operations to solve Problem \ref{prob:reconstr}.

    Ad \ref{item:atoms_outside}:
    We use Theorem \ref{thm:distinguish_outside},
    which distinguishes two types of roots.
    First, those that have a $\delta$ close neighbor within the roots and the endpoints of these $\delta$ close bulks work as a valid $\delta$ accurate approximation of the corresponding interval.
    Or second, they do not have a $\delta$ close neighbor.
    Due to Assumption \ref{as:atoms_outside}, we can argue that 
    the roots without a neighbor
    either correspond to an atom or we can rule them out as pollution as we expect no atoms in between the continuous parts (and that is the only place where pollution can occur).  
    The complexity is still $O(N^3)$.
    However, now, we do not have a provable bound for $N$.
    We only know $N$ is finite.

    Ad \ref{item:last}: In this case, we use the criteria from Theorem \ref{thm:distinguish_general} to classify the roots of $P_N$ and to guarantee that the result solves Problem \ref{prob:reconstr} in $O((N+1)^3)=O(N^3)$ steps as well.
    We do not however have any provable bounds for the degree $N$ needed.
    We can only note that in this situation, the $N$ needed is significantly larger than if we added Assumption \ref{as:atoms_outside} or \ref{as:single_interval}.
\end{proof}

We benchmark this algorithm in Section \ref{sec:num}.

\section{New results on orthogonal polynomials}
\label{Sec:orthogonal_polynomials}

This section is devoted to studying the behavior of zeros of monic orthogonal polynomials (Definition \ref{def:OGPoly}) corresponding to a measure $\mu$ satisfying Assumption \ref{as:measure_base} and optionally Assumptions 
\ref{as:atoms_outside} or \ref{as:single_interval}. First, we introduce some standard results that play a key part in our derivations (Sections \ref{sec:three_term}, \ref{sec:standard_zeros_ogpoly}), then we provide the exact statement of our novel results in Section \ref{sec:our_asymptotic_results}.

\subsection{Three term recurrence}
\label{sec:three_term}
Let $\{P_n\}_{n\in \mathbb{N}}$ be the sequence of monic orthogonal polynomials \ref{def:OGPoly} of a measure $\mu$, then it satisfies the three term recurrence relation \cite{Szego}:
\begin{align} \label{eq:three_term}
    xP_{j}(x) =
    P_{j+1}(x) +
     \alpha_j P_j(x) + \beta_{j} P_{j-1}(x),\forall j
\end{align}
with the convention $P_{-1} =0$. Let us note that the coefficients $\alpha_j, \beta_j$ contain sufficient amount of information about the underlying measure $\mu$.

\begin{definition}
\label{def:Jacobi_matrix}
    Given $\mu$ and (\ref{eq:three_term}), we define the $n$-th tridiagonal Jacobi matrix associated with $\mu$ \cite{gautschi2004} as
\begin{align}
\label{eq:Jacobi_matrix}
J_n =
\begin{pmatrix}
\alpha_0 & \sqrt\beta_1 & 0 & \cdots & 0 \\
\sqrt\beta_1 & \alpha_1 & \sqrt\beta_2 & \ddots & \vdots \\
0 & \sqrt\beta_2 & \alpha_2 & \ddots & 0 \\
\vdots & \ddots & \ddots & \ddots & \sqrt\beta_{n-1} \\
0 & \cdots & 0 & \sqrt\beta_{n-1} & \alpha_{n-1}
\end{pmatrix}.
\end{align}
\end{definition}

\begin{theorem}[\cite{simon2005opuc1}] \label{thm:spectrum+zeros}
    Suppose $\mu$ is a Borel measure  on $\R$.
    Let $(P_n)_{n=0}^{\infty}$ be the sequence of the polynomials orthogonal with respect to $\mu$.
    Let $(J_n)_{n=1}^{\infty}$ be the sequence of tridiagonal Jacobi matrices associated with $\mu$.
    Then
    \begin{align*}
        \det (xI - J_n) = P_n(x), \quad n \in \mathbb{N},
    \end{align*}
    and therefore, the spectrum of $J_n$ is equal to the set of zeros of $P_n$.
\end{theorem}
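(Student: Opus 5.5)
We will show $\det(xI - J_n) = P_n(x)$ by induction on $n$, using only the three term recurrence (\ref{eq:three_term}). The zero statement then follows because $J_n$ is real symmetric and its eigenvalues are exactly the roots of its characteristic polynomial.

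Before starting we record what the coefficients look like. For a measure with infinite support (as here, since $\mu_{ac}\neq 0$), the inner product $\langle\cdot,\cdot\rangle_\mu$ is positive definite on polynomials. Hence every $K_j=\langle P_j,P_j\rangle_\mu>0$, and
\begin{align*}
\beta_j = \frac{K_j}{K_{j-1}} > 0 .
\end{align*}
So $\sqrt{\beta_j}$ is real and $J_n$ is a genuine real symmetric tridiagonal matrix. In the finitely supported case one restricts to $n$ at most the size of the support, where the same reasoning applies. Comparing coefficients in (\ref{eq:three_term}) via $\langle xP_j,P_{j-1}\rangle=\langle P_j,xP_{j-1}\rangle=K_j$ recovers the formulas (\ref{eq:recurrence_alpha})--(\ref{eq:recurrence_beta}), so the recurrence coefficients are exactly those appearing in $J_n$.

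Set $D_n(x)=\det(xI-J_n)$. The base cases are immediate:
\begin{align*}
D_1(x) &= x-\alpha_0 = P_1(x),\\
D_2(x) &= (x-\alpha_0)(x-\alpha_1)-\beta_1 = P_2(x).
\end{align*}
For the inductive step, expand $D_{n+1}$ along its last row. That row of $xI-J_{n+1}$ has only two nonzero entries, namely $x-\alpha_n$ and $-\sqrt{\beta_n}$. The cofactor of $-\sqrt{\beta_n}$ is itself a matrix whose last column has the single entry $-\sqrt{\beta_n}$, and expanding that reduces it to $D_{n-1}$. Together this gives the continuant identity
\begin{align*}
D_{n+1}(x)=(x-\alpha_n)D_n(x)-\beta_n D_{n-1}(x).
\end{align*}
This is exactly (\ref{eq:three_term}) rewritten as $P_{n+1}=(x-\alpha_n)P_n-\beta_nP_{n-1}$. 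With the matching initial data $D_0=1=P_0$ (and $D_{-1}=0=P_{-1}$), induction yields $D_n=P_n$ for all $n$.

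Finally, $J_n$ is real symmetric, so it is diagonalizable with real eigenvalues. Those eigenvalues are precisely the roots of $D_n=P_n$, so $\operatorname{spec}J_n=\mathcal R_n$.

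The only delicate point is the cofactor bookkeeping in the expansion: the sign of the off-diagonal term and the fact that the two factors of $\sqrt{\beta_n}$ combine to $\beta_n$. Writing the $2\times2$ trailing block explicitly makes both checks routine. No other earlier result of the paper is needed.
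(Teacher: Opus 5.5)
Your proof is correct. Expanding $\det(xI-J_{n+1})$ along the last row gives $D_{n+1}=(x-\alpha_n)D_n-\beta_n D_{n-1}$, which is exactly the recurrence (\ref{eq:three_term}) with the same initial data, and the symmetry of $J_n$ then identifies its spectrum with the real zero set of $P_n$. The paper gives no proof of its own and only cites the reference; your expansion-in-minors argument is the standard proof of this classical fact, and your remarks that $\beta_j>0$ and that $n$ should not exceed the support size in the finitely atomic case supply hypotheses that the paper's phrase ``Borel measure'' leaves implicit.
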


\subsection{Zeros of orthogonal polynomials}
\label{sec:standard_zeros_ogpoly}
We follow with several standard results from the theory of orthogonal polynomials  \cite[Theorem 3.3.1., Theorem 3.3.2., Theorem 6.1.1.]{Szego} and \cite[Theorem 2.4.]{Freud1971}. These illustrate the practical properties of orthogonal polynomials and will be leveraged in our own theorems and algorithm.

\begin{theorem}  [Properties of OG polynomials] \label{thm:properties_og_poly}   
    Let $\mu$ be a positive Borel measure on $\mathbb{R}$ with infinite support and finite moments of all orders.
Let $\{P_n\}_{n\ge 0}$ be the corresponding orthonormal polynomials with respect to $\mu$.
 Let $[a',b']$ be a subinterval of $[a,b]$.
Then the orthogonal polynomials have the following properties:
\begin{enumerate}
    \item
    \label{thm:roots_and_support}
    \textbf{Roots and support of the measure:}  For every $n\ge 1$, the polynomial $P_n$ has exactly $n$ real, simple zeros, and all of them lie in the
closed convex hull of the support of $\mu$.
    \item
    \label{thm:density_of_zeros}
    \textbf{Density  of zeros:} If $\int_{a'}^{b'} d\mu(x)>0$ and $n$ is sufficiently large, every polynomial $P_n(x)$ has at least one zero in $[a',b']$.
    \item 
    \label{thm:interlacing}
    \textbf{Interlacing property:}
    Between two zeros of $P_n(x)$, there is at least one zero of $P_m(x)$, $\forall m>n$.
    \item 
    \label{thm:measure_0_sets}
    \textbf{Roots and measure 0 sets:}
    If $\int_{a'}^{b'} d\mu(x)=0$,  every polynomial $P_n(x)$ has at most one zero in $[a',b']$.
\end{enumerate} 
    
\end{theorem}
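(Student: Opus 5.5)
The four items of Theorem~\ref{thm:properties_og_poly} are classical, and I would prove them in order, each one resting on the previous. The common tool is the orthogonality of $P_n$ to every polynomial of lower degree, together with positivity of $\mu$ on infinite support, which gives $\int q^2\,d\mu>0$ for every nonzero polynomial $q$.

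\emph{Item~\ref{thm:roots_and_support}.} Let $x_1<\dots<x_k$ be the distinct real points in the interior of $\mathrm{conv\,hull}(\supp\mu)$ at which $P_n$ changes sign, and set $q(x)=\prod_{i=1}^k (x-x_i)$. Then $P_n q$ has constant sign on $\supp\mu$. It is not identically zero there, because $\supp\mu$ is infinite. Hence $\int P_n q\,d\mu\neq 0$. If $k<n$, orthogonality forces this integral to vanish, a contradiction. So $k=n$: all $n$ zeros are real, simple, and lie in the convex hull. As a cross-check, Theorem~\ref{thm:spectrum+zeros} also gives realness and simplicity, since $J_n$ is symmetric with nonzero off-diagonal entries.

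\emph{Item~\ref{thm:measure_0_sets}.} Suppose $\mu((a',b'))=0$ but $P_n$ has two zeros $\xi_1,\xi_2$ in $[a',b']$. Write $P_n=(x-\xi_1)(x-\xi_2)\,r(x)$ with $\deg r=n-2$, and test against $r$:
\[
\int P_n\, r\,d\mu=\int (x-\xi_1)(x-\xi_2)\,r^2\,d\mu .
\]
The left side is $0$ by orthogonality. On the right, $(x-\xi_1)(x-\xi_2)\ge 0$ outside $(\xi_1,\xi_2)$, and that interval is $\mu$-null, so the integrand is nonnegative $\mu$-a.e. It is also positive on a set of positive measure, because $\supp\mu$ is infinite and $r$ has finitely many zeros. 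Hence the right side is $>0$, a contradiction. The endpoint cases, where $\xi_i$ coincides with an atom at $a'$ or $b'$, need care and are handled by the same sign argument.

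\emph{Item~\ref{thm:interlacing}.} I would first prove it for $m=n+1$ via the Christoffel--Darboux identity
\[
P_{n+1}'P_n-P_{n+1}P_n'>0 .
\]
Evaluated at consecutive zeros of $P_{n+1}$, this shows $P_n$ alternates sign there. Then, by induction on $m$, I would use Gauss quadrature: the zeros of $P_m$ are nodes of a positive quadrature rule that is exact to degree $2m-1$. Suppose $P_m$ had no zero strictly between two consecutive zeros $\xi<\eta$ of $P_n$. Build $q = P_n^2/((x-\xi)(x-\eta))$, which has degree $2n-2\le 2m-1$ and is negative exactly on $(\xi,\eta)$ and nonnegative elsewhere. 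The quadrature then gives $\int q\,d\mu\ge 0$ on the one hand, while orthogonality and positivity considerations give a contradiction with Item~\ref{thm:measure_0_sets}-type reasoning.

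\emph{Item~\ref{thm:density_of_zeros}.} Again use Gauss quadrature: $\sum_i \lambda_{n,i} f(x_{n,i}) \to \int f\,d\mu$ for continuous $f$ on the compact convex hull. Take $f\ge 0$ continuous, supported in $[a',b']$, with $\int f\,d\mu>0$; such an $f$ exists since $\mu([a',b'])>0$. For large $n$ the quadrature sum must be positive, so some node $x_{n,i}$ lies in $[a',b']$. Convergence of the quadrature follows from exactness on polynomials and Weierstrass approximation, using $\sum_i\lambda_{n,i}=\mu(\mathbb{R})$.

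I expect the general-$m$ interlacing step to be the main obstacle. Strictly, only the statement ``between two zeros of $P_n$ there is a zero of $P_m$'' is true, and I would prove exactly that form, citing Szeg\H{o} for the details.
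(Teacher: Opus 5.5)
Your arguments are the classical ones behind the references the paper gives in place of a proof (Szeg\H{o}, Thms.~3.3.1, 3.3.2, 6.1.1; Freud, Thm.~2.4). Items~1 and~4 are fine as written. In Item~4 the endpoint cases need no separate care: any point of $\supp\mu$ outside $[\xi_1,\xi_2]$ that is not a zero of $r$ already makes the integral positive.

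The genuine gap is the last step of the general-$m$ interlacing argument. Orthogonality gives $\int q\,d\mu=\int P_n\cdot P_n/((x-\xi)(x-\eta))\,d\mu=0$. That is perfectly compatible with the quadrature bound $\int q\,d\mu\ge 0$, so no contradiction has been reached. Appealing to ``Item~4-type reasoning'' does not supply one unless you say what replaces the infinite support there. The missing idea is where $m>n$ enters:
the $m$ Gauss nodes are distinct, so at least one of them is not a zero of $P_n$. That node lies outside $(\xi,\eta)$ by hypothesis, so $q>0$ there. Positivity of the Christoffel numbers then makes the quadrature sum, hence $\int q\,d\mu$, strictly positive, contradicting orthogonality. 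For $m=n$ the argument must break down, which confirms this is the essential point. No induction on $m$ is involved; chaining the $m=n+1$ case would not yield general $m$ anyway, and the Christoffel--Darboux step becomes redundant.

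In Item~2, a continuous $f\ge 0$ vanishing outside $[a',b']$ also vanishes at interior endpoints. So your test function exists only when $\mu((a',b'))>0$, not merely $\mu([a',b'])>0$. This is a defect of the statement rather than of your proof. Take $\mu$ equal to Lebesgue measure on $[-1,0]$ plus $\delta_1+\delta_2$, and $[a',b']=[1,3/2]$. Apply your argument to a neighbourhood of the atom $2$, and use that all zeros lie in the open hull $(-1,2)$: for large $n$, $P_n$ has a zero in $(3/2,2)$. Since $\mu((1,2))=0$, Item~4 says this is its only zero in $[1,2]$. Hence $P_n$ has no zero in $[1,3/2]$, although $\mu([1,3/2])=1$. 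So Item~2 should be stated and proved with $\mu((a',b'))>0$, which is how the paper actually uses it (open neighbourhoods of atoms).

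Likewise, your Weierstrass step uses compact support. That is the paper's standing Assumption~\ref{as:measure_base}, but it must be an explicit hypothesis, since the item can fail for measures whose moment problem is indeterminate.
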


\subsection{Asymptotic behavior of zeros of orthogonal polynomials}
\label{sec:our_asymptotic_results}
This subsection is devoted to novel results on the zeros of orthogonal polynomials that lead to provably tractable algorithms for the extraction of the support of the measure (\ref{Sec:Algorithms}).
Namely, we show that for a measure $\mu$ satisfying Assumption \ref{as:measure_base}, for each atom,  
there is a unique root sequence \ref{def:root_sequence} that converges to the atom  exponentially quickly.
Moreover, we leverage results by \cite{Simon2007_equilibrium}, \cite{Totik2009} implying that for such $\mu$, the spacing of zeros on the interval supporting $\mu_{ac}$ is asymptotically linear, which results into bulks of roots that densely populate $\supp \mu_{ac}$.

For our first result, we need to  restrict ourselves to $\mu$ satisfying Assumption \ref{as:single_interval}, without loss of generality we can assume that $\supp\mu_{ac}=[-1,1]$. Since $\mu_{ac}$ is assumed to be absolutely continuous with respect to the Lebesgue measure, it admits a density function $\alpha\in C(-1,1)$ which is given by the Radon-Nikodym derivative. 
In order to prove the following Theorem, we need to assume that $\alpha$ satisfies some mild regularity condition (see \cite{goncar_convergence_1975} for the precise statement). 
In particular, the existence of a positive constant $c_0>0$ such that $\alpha(t)\geq c_0,\forall t$ a.e. is sufficient, and this is in fact guaranteed by Assumption \ref{as:measure_base}.

\begin{theorem}[Exponential convergence to atoms] \label{lemma:atom_convergence}
    Let $\mu$ satisfy Assumption \ref{as:measure_base} and Assumption \ref{as:single_interval}. Without loss of generality assume that $\supp \mu_{ac} = [-1,1]$. Let $\{P_n\}_{n\in\mathbb{N}}$ be a sequence of orthogonal polynomials of $\mu$, let
     \begin{align*}
        x_{1,n}<x_{2,n}<...<x_{n,n}
    \end{align*}
    be the zeros of $P_n$.
    Then the following statements hold:\begin{itemize}
        \item for all $n$ large enough, $P_n$ has exactly $r$ roots outside of $[-1,1]$,
        \item for all $n$ large enough, one may enumerate the roots outside of the continuous part of the support $[-1,1]$ 
        in such a way that $x_{j_n,n} \overset{n\to\infty}{\rightarrow} x_j$, where $x_j$ is an atom.
        \item fixing $j$ and considering the root sequence $(x_{j_n,n})_{n}$ that converges to $x_j$ as $n\rightarrow\infty$, there exists some $c_j< 1$ such that $$
            \limsup_{n\rightarrow\infty}|x_{j_n,n}-x_j|^{1/n}\leq c_j.
        $$
    \end{itemize}
\end{theorem}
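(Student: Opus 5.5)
We will prove Theorem~\ref{lemma:atom_convergence} by separating the counting part (exactly $r$ roots outside $[-1,1]$) from the convergence part. The convergence part will use the strong (ratio/$n$-th root) asymptotics of orthogonal polynomials for measures in the Nevai class, as in Gonchar's work \cite{goncar_convergence_1975}. The basic tool is Theorem~\ref{thm:properties_og_poly}.

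\textbf{Counting step.} Write $E=\{x_1,\dots,x_r\}$ and $S=[-1,1]\cup E=\supp\mu$. Fix $\delta<\Delta/2$. By item~\ref{thm:measure_0_sets} of Theorem~\ref{thm:properties_og_poly}, each gap interval of $\mathbb{R}\setminus S$ with $\mu$-measure zero contains at most one zero of $P_n$. The two unbounded pieces contain none, by item~\ref{thm:roots_and_support}. Hence $P_n$ has at most one zero in each bounded gap between consecutive points of $E\cup\{\pm1\}$ lying outside $[-1,1]$, and no zeros elsewhere off $S\cup\{$gaps$\}$. By item~\ref{thm:density_of_zeros} applied to $[x_j-\delta,x_j+\delta]$, for $n$ large there is at least one zero in each such window.

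To get exactly one zero near each atom, we argue via the Christoffel function. Consider $\lambda_n(x_j)=\min\{\int |q|^2\,d\mu : q(x_j)=1,\ \deg q<n\}\ge \mu(\{x_j\})>0$. Combined with the Gauss quadrature identity (the Christoffel numbers at the zeros sum to $1$), this shows the zeros cluster near $x_j$ but cannot have two in a window of measure $\mu(\{x_j\})$ plus a vanishing gap contribution. More directly, between two zeros in $(x_j-\delta,x_j+\delta)\setminus\{x_j\}$ on the same side there would be a gap of measure zero containing two zeros, which is impossible. So at most one zero lies on each side of $x_j$. The remaining configuration, one zero just left and one just right of $x_j$, will be ruled out using the convergence step below: every zero outside $[-1-\delta,1+\delta]$ converges to $E$ at a geometric rate governed by $|\Phi(x)|$. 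Two zeros straddling $x_j$ would yield a polynomial $P_n/((x-z)(x-z'))$ violating the $n$-th root asymptotics at $x_j$.

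\textbf{Convergence step.} Since $\alpha\ge c_0>0$ on $[-1,1]$, Rakhmanov's theorem gives $\beta_n\to1/4$ and $\alpha_n\to0$, and adding finitely many atoms preserves this. Moreover, the $\mu_{ac}$-polynomials $Q_n$ satisfy the exterior asymptotics $Q_n(x)/Q_{n+1}(x)\to 1/\Phi(x)$ uniformly on compacts off $[-1,1]$, where $\Phi(x)=x+\sqrt{x^2-1}$ with $|\Phi|>1$. We use the Christoffel/Uvarov formula for adding masses: $P_n(x)$ equals $\det$ of an $(r+1)\times(r+1)$ matrix built from $Q_n,\dots,Q_{n+r}$ (or kernel polynomials $K_n(x,x_j)$) evaluated at $x$ and at the $x_j$. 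Dividing by the dominant term and using the ratio asymptotics, we obtain
\[
\frac{P_n(x)}{Q_n(x)}=\prod_{j=1}^r\frac{(x-x_j)\,g(x,x_j)}{\cdots}+O(\rho^n),
\]
uniformly near $x_j$. Here $\rho=\max_j|\Phi(x_j)|^{-1}\cdot(\text{nearby ratio})<1$, coming from $K_n(x,x_j)/K_n(x_j,x_j)$, which decays like $|\Phi(x)\Phi(x_j)|^{-n}\cdot|\Phi(x_j)|^{2n}$ off the diagonal. Hurwitz's theorem then gives exactly one zero near each $x_j$, which also finishes the counting step. The quantitative form, a simple zero of the limit perturbed by $O(\rho^n)$, gives $|x_{j_n,n}-x_j|=O(c_j^n)$ with $c_j\approx|\Phi(x_j)|^{-2}<1$, hence the $\limsup$ bound.

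\textbf{Main obstacle.} The hard step is making the Uvarov-type perturbation uniform and locating the exponentially small correction precisely. In the limit the zero at $x_j$ becomes a genuine factor $(x-x_j)$, so we must show the remainder is $O(c_j^n)$ with $c_j<1$ rather than merely $o(1)$. I would first try the case $r=1$ explicitly via
\[
P_n(x)=Q_n(x)-\frac{m\,Q_n(x_1)\,K_{n}^{Q}(x,x_1)}{1+m\,K_{n}^{Q}(x_1,x_1)},
\]
whose zero near $x_1$ is controlled by $Q_n(x_1)/(K_n(x_1,x_1)Q_n(x))\sim|\Phi(x_1)|^{-n}$ times bounded factors. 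I would then induct on $r$, adding one atom at a time, since each addition preserves Nevai-class asymptotics off the support.
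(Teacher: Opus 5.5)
Your route is genuinely different from the paper's. The paper never uses recurrence coefficients. It regards $P_n$ as the Pad\'e denominators of the Stieltjes transform $f$ of $\mu$ and takes the zero count from Gonchar's theorem. It gets the rate by evaluating $(f-p_n/q_n)(z-x_j)(z-x_{j_n,n})$, which is holomorphic on $B(x_j,\delta)$, at $z=x_j$, and bounding it via the maximum principle on $\partial B(x_j,\delta)$, where Gonchar gives geometric smallness of $f-p_n/q_n$.

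Your Denisov--Rakhmanov/ratio-asymptotics/Uvarov route can be completed. It needs only $\mu'>0$ a.e.\ on $[-1,1]$ and yields $c_j=|\Phi(x_j)|^{-2}$ explicitly. However, the step meant to produce the rate fails as written. There is no uniform $O(\rho^n)$ remainder for $P_n/Q_n$ near $x_j$. Ratio asymptotics converge only as fast as $\alpha_n\to0$, $\beta_n\to1/4$, which need not be geometric (for the Legendre weight $\beta_n-1/4$ is of order $n^{-2}$). So Hurwitz plus a uniform error gives $x_{j_n,n}\to x_j$ and nothing more.

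The rate has to come from the exact value at the atom. Take as base measure $\nu_j=\mu-m_j\delta_{x_j}$ with $m_j=\mu(\{x_j\})$. It is still in the Nevai class and $x_j\notin\supp\nu_j$, so its monic orthogonal polynomials $Q_n$ are zero-free and have ratio asymptotics on a disc $D$ about $x_j$. The one-mass Uvarov formula gives, for $F_n=P_n/Q_n$,
\[
F_n(x)=1-\frac{Q_n(x_j)}{Q_n(x)}\cdot\frac{m_jK_n(x,x_j)}{1+m_jK_n(x_j,x_j)},\qquad F_n(x_j)=\frac{1}{1+m_jK_n(x_j,x_j)}.
\]
On $D$ the subtracted term converges to $h(x)=\frac{\Phi(x_j)}{\Phi(x)}\cdot\frac{1-\Phi(x_j)^{-2}}{1-(\Phi(x)\Phi(x_j))^{-1}}$. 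This limit is of order one, equals $1$ only at $x_j$, and has $h'(x_j)\neq0$. Hurwitz then gives a unique zero $x_{j_n,n}$ near $x_j$, and Cauchy estimates give $|F_n'|\ge c>0$ on a real interval around $x_j$. The mean value theorem then gives $|x_{j_n,n}-x_j|\le F_n(x_j)/c$, with $K_n(x_j,x_j)^{1/n}\to|\Phi(x_j)|^2$. Doing this atom by atom makes your induction on $r$ and the product expansion unnecessary. Note also that a determinant built from $Q_n,\dots,Q_{n+r}$ is Christoffel's formula for multiplying the measure by a polynomial, not the formula for adding masses.

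Your counting step has a further hole. Hurwitz on compact subsets of $\mathbb{C}\setminus([-1,1]\cup E)$ does not exclude a zero in $(1,1+\epsilon_n)$ or $(-1-\epsilon_n,-1)$ with $\epsilon_n\to0$. Your gap-by-gap bound allows one: with a single atom $x_1<-1$, for example, a zero exactly at $x_1$ together with a zero in $(x_1,-1)$ drifting to $-1$. So the first bullet is not established.

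Both this and the straddling case are settled before any asymptotics by the sharper form of the measure-zero-gap property: the open interval between two consecutive zeros $z<z'$ of $P_n$ meets $\supp\mu$ (integrate $P_n^2/((x-z)(x-z'))$ against $\mu$). If $z_1<\dots<z_m$ are the zeros in $(1,\infty)$, each $(z_i,z_{i+1})$ contains its own atom, never the largest one. Hence $m$ is at most the number of atoms in $(1,\infty)$, and likewise on the left. Combine this with the density-of-zeros property on the disjoint windows $(x_j-\delta,x_j+\delta)$, $\delta<\Delta/2$. For $n$ large you get exactly $r$ zeros outside $[-1,1]$, one per window, which is the first two bullets without any asymptotics.
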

\begin{proof}
    We prove the statement in section \ref{appendix:roots_convergence}.
\end{proof}

We follow with three theorems that work as a criterion for labeling roots as roots approximating atoms, roots approximating the continuous part or pollution.
The conditions that distinguish the roots become more complicated for general measures.

\begin{theorem} [Continuous and atomic part separation]\label{thm:distinguish}
    Suppose we have a measure $\mu$ satisfying Assumption \ref{as:measure_base} and Assumption \ref{as:single_interval}.
    Let $P_n$ be the monic polynomials orthogonal with respect to $\mu$.
    Let 
    \begin{align*}
        x_{1,n}<x_{2,n}<...<x_{n,n}
    \end{align*}
    be the zeros of $P_n$.
    Then, for any $\delta$ such that $0<2\delta <\Delta$, there exists $n_0 \in \mathbb{N}$, $n_0 = O(1/\delta)$ as $\delta \to 0+$, such that for all $n \geq n_0$, for every root $x_{i,n}$ the following holds:
    \begin{align*}
        \exists k \in \{ 1,2,...,n\}, k\not= i :  | x_{i,n} - x_{k,n} | < \delta&\Longleftrightarrow x_{i,n} \in \supp \mu_{ac} , 
        \\
        \forall j \in \{1,2,...,n\}, \ j \not= i : |x_{i,n} - x_{j,n}| \geq \delta
         &\Longleftrightarrow      x_{i,n} \in \supp \mu_{pp} + B(0,\delta).
    \end{align*}
    Moreover, for every point $y$ in $\supp \mu$, there exists $x_k,n$ such that
    \begin{align*}
        |y-x_{k,n}|<\delta.
    \end{align*}
\end{theorem}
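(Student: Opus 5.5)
We prove the theorem by splitting the zeros of $P_n$ into those outside $[a,b]=\supp\mu_{ac}$ (normalized to $[-1,1]$) and those inside, and controlling each group separately.

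\emph{Zeros outside $[-1,1]$.} Theorem~\ref{lemma:atom_convergence} says that for $n$ large, $P_n$ has exactly $r$ zeros outside $[-1,1]$, one for each atom $x_j$, and these converge to $x_j$ exponentially. Solving $c_j^n\lesssim\delta$ gives $n\gtrsim\log(1/\delta)$, which is well inside $O(1/\delta)$. So for such $n$ each atom $x_j$ carries exactly one zero within distance $\delta$ (say $\delta/2$).

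\emph{Zeros inside $[-1,1]$.} We need an upper bound on the gap between consecutive zeros in $[-1,1]$. Here the lower bound $d\mu_{ac}\ge c_0$ is used. The plan is to invoke the Totik/Simon results cited before Theorem~\ref{lemma:atom_convergence}: for a regular measure whose density is bounded below on an interval, consecutive zeros in the interior satisfy $x_{k+1,n}-x_{k,n}\le C/n$. Near the endpoints the spacing is even $O(1/n^2)$, by comparison with the Chebyshev/equilibrium spacing $\pi\sqrt{1-x^2}/n$. A finite point perturbation does not break this. By Theorem~\ref{thm:properties_og_poly}(\ref{thm:interlacing}), the $r$ extra atoms can shift zeros only by a bounded number of positions. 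Alternatively, the Christoffel function bound $\lambda_n(x)\ge c/n$ on $[-1,1]$ together with the Markov–Stieltjes inequality bounds the gaps by $O(1/n)$ with constants depending on $c_0$ and $B$.

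The same argument shows that the zeros come within $O(1/n)$ of the endpoints $\pm1$, and that at most finitely many zeros (the $r$ atom-zeros) lie outside $[-1,1]$. Setting $n_0=\lceil 2C/\delta\rceil$ then makes every gap less than $\delta/2$, so for $n\ge n_0$:
\begin{itemize}
\item every zero in $[-1,1]$ has a neighbour closer than $\delta$ (for $n\ge 3$ there are many zeros inside);
\item every point of $\supp\mu_{ac}$ is within $\delta$ of a zero;
\item together with the first step, every atom is within $\delta$ of a zero, which gives the final covering statement.
\end{itemize}

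\emph{The equivalences.} A zero near an atom has no other zero within $\delta$. Its only competitors are other atom-zeros, which are at distance at least $\Delta-\delta>\delta$, and interval zeros, which are at distance at least $\Delta-\delta/2>\delta$, using $2\delta<\Delta$. Conversely, every zero lies either in $[-1,1]$, where it has a $\delta$-neighbour, or near an atom, where it does not. The two classes are disjoint and exhaust all zeros, so both "$\Leftrightarrow$" statements follow.

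\emph{Main obstacle.} The hard step is the uniform $O(1/n)$ gap bound on $[-1,1]$ for the mixed measure $\mu_{pp}+\mu_{ac}$ with only the lower bound $c_0$ on the density. The cited fine-zero-spacing results typically require continuity of the density or local Szeg\H{o}-type conditions. I would first try the Christoffel-function route: $\lambda_n(x,\mu)\ge\lambda_n(x,c_0\,dx|_{[-1,1]})\ge c'/n$, together with the bound $x_{k+1}-x_k\le$ const$\cdot\lambda_n/w$ via Markov–Stieltjes. This still needs an upper estimate on the Christoffel function, obtained by comparison with the Chebyshev measure using the compact support bound $B$.

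A further subtlety is that the statement asserts $x_{i,n}\in\supp\mu_{ac}$ exactly, whereas the argument delivers only $x_{i,n}\in\supp\mu_{ac}$ for non-atom zeros. This is still exact here, because Theorem~\ref{thm:properties_og_poly}(\ref{thm:measure_0_sets}) allows at most one zero in each gap between the interval and an atom, and the $r$ atom-zeros already account for those gaps.
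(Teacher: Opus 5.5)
Your overall architecture matches the paper's. Zeros off $[-1,1]$ are handled by Theorem~\ref{lemma:atom_convergence}: there are exactly $r$ of them, they converge exponentially, so $O(\log(1/\delta))$ extra degrees suffice. Once you know that consecutive zeros in $[-1,1]$ are less than $\delta$ apart and that $\pm1$ lie within $\delta$ of a zero, your derivation of both equivalences and of the covering statement from $2\delta<\Delta$ is correct.

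The genuine gap is the step you flag yourself and never close: a $C/n$ bound on the gaps between zeros in $[-1,1]$. The result you propose to invoke (uniform $C/n$ spacing for a regular measure whose density is merely bounded below) is not what Theorem~\ref{thm:linear_spacing} provides. That theorem needs a compact $S\subset\operatorname{Int}(\supp\mu)$ and a density continuous and positive on $S$, and its constant depends on $S$.

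Your fallback also fails under Assumption~\ref{as:measure_base}. In Markov--Stieltjes, $c_0(x_{k+1,n}-x_{k,n})\le\mu((x_{k,n},x_{k+1,n}))\le\lambda_{k,n}+\lambda_{k+1,n}$, you need an \emph{upper} bound $\lambda_{k,n}\le C/n$. Since $\lambda_n(x,\mu)$ is increasing in $\mu$, that requires an upper bound on the density at scale $1/n$. The assumption gives only $d\mu_{ac}\ge c_0$, and the support bound $B$ cannot supply one. For instance, a density $\asymp 1+|t-t_0|^{-1/2}$ near an interior point $t_0$ is admissible and has $\lambda_n\asymp n^{-1/2}$ at the zeros near $t_0$, so this route yields only $O(n^{-1/2})$ gaps there. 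The lower bound $\lambda_n\ge c'/n$ you quote points the wrong way for bounding gaps from above.

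Two further points do not hold up. The claimed $O(1/n^2)$ endpoint spacing is unsupported for such densities. And Theorem~\ref{thm:properties_og_poly}(\ref{thm:interlacing}) compares $P_n$ and $P_m$ of the \emph{same} measure, so it says nothing about how adding atoms moves zeros.

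The paper closes this step differently. It applies Theorem~\ref{thm:linear_spacing} directly to $\mu$, which is regular by Theorem~\ref{thm:regular_measure_sufficient} and absolutely continuous near $S$ because the atoms are $\Delta$-separated from $[a,b]$; so no perturbation argument is needed. It takes $S=[a+\delta/2,\,b-\delta/2]$ with $c/n<\delta/2$. It treats the end strips without endpoint asymptotics, by applying the covering estimate at $x=a+\delta/2$ and $x=b-\delta/2$. The zero found there lies within $\delta$ of every point of $[a,a+\delta/2]$ (resp.\ $[b-\delta/2,b]$) and so serves as a $\delta$-neighbour for the zeros in that strip.

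Your reservation about this citation is well founded. It tacitly uses continuity of the density on $S$, which Assumption~\ref{as:measure_base} does not contain. Also, because $S$ depends on $\delta$, so does $c$, which the claim $n_0=O(1/\delta)$ passes over. A uniform constant as in your $n_0=\lceil 2C/\delta\rceil$ would be cleaner. But as written your proposal puts no argument at the one nontrivial step, so it does not yet prove the theorem. You need either the extra regularity that makes Theorem~\ref{thm:linear_spacing} applicable or an actual uniform spacing estimate.

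A minor point: your closing appeal to Theorem~\ref{thm:properties_og_poly}(\ref{thm:measure_0_sets}) does not exclude a spurious zero between $[-1,1]$ and an atom, since the atom's zero may lie on the far side of the atom. What excludes it is the ``exactly $r$ zeros outside $[-1,1]$'' part of Theorem~\ref{lemma:atom_convergence}, which you already used.
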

\begin{proof}
    We prove the statement in section \ref{appendix:interval_atom_behavior}.
\end{proof}
We follow with a version of Theorem \ref{thm:distinguish} for measures with multiple continuous components as in Assumption \ref{as:atoms_outside}.
\begin{theorem}
\label{thm:distinguish_outside}
    Suppose we have a measure $\mu$ satisfying Assumption \ref{as:measure_base} and Assumption \ref{as:atoms_outside}.
    Let $\{P_n\}_n$ be the monic polynomials orthogonal with respect to $\mu$.
    Let 
    \begin{align*}
        x_{1,n}<x_{2,n}<...<x_{n,n}
    \end{align*}
    be the zeros of $P_n$.
    Then, for any $\delta$ such that $0<2\delta <\Delta$, there exists $n_0 \in \mathbb{N}$ (without a provable bound), such that for all $n \geq n_0$ and arbitrary $x_{i,n}$:
\begin{align*} 
     \exists k \in \{ 1,2,...,n\}, k\not= i :  | x_{i,n} - x_{k,n} | &< \delta
     \Longrightarrow 
     x_{i,n} \in \supp \mu_{ac} + B(0,\delta), 
        \\
        \forall j \in \{1,2,...,n\}, \ j\not = i : |x_{i,n} - x_{j,n}| \geq \delta
        \Longrightarrow
        &\left(
        x_{i,n} \in \supp \mu_{pp} + B(0,\delta) \right), \text{ or }
        \\
        &\left(x_{i,n} \in \mathrm{conv hull }\supp \mu_{ac} \setminus  \supp \mu_{ac}    \right).
\end{align*}
Moreover, for every point $y$ in $\supp \mu$, there exists $x_k,n$ such that
    \begin{align*}
        |y-x_{k,n}|<\delta.
    \end{align*}
\end{theorem}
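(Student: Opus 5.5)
We prove Theorem \ref{thm:distinguish_outside} by splitting the real line into three kinds of regions, each with its own root-counting argument, and then patching them together with the separation distance $\Delta$. The three regions are: (i) small neighbourhoods of the intervals $[a_j,b_j]$; (ii) small neighbourhoods of the atoms $x_1,\dots,x_r$; (iii) the remaining gaps. The gaps are either the bounded components of $\mathrm{conv hull}\supp\mu\setminus\supp\mu$, or the exterior of $\mathrm{conv hull}\supp\mu$, which contains no roots by Theorem \ref{thm:properties_og_poly}(\ref{thm:roots_and_support}). Fix $\delta$ with $2\delta<\Delta$. Since $\supp\mu$ has finitely many components, there are only finitely many gaps, and each has positive length. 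By Theorem \ref{thm:properties_og_poly}(\ref{thm:measure_0_sets}), each closed gap contains at most one root of $P_n$ for every $n$.

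\emph{Step 1 (the continuous part).} Cover each $[a_j,b_j]$ by finitely many subintervals $I_{j,1},\dots,I_{j,L}$ of length $<\delta/2$. Each has positive $\mu$-mass, since the density is bounded below by $c_0$. By Theorem \ref{thm:properties_og_poly}(\ref{thm:density_of_zeros}), applied to each of these finitely many subintervals, there is $n_1$ such that for $n\ge n_1$ every $I_{j,\ell}$ contains a zero of $P_n$. Two consequences follow. Every point of $\supp\mu_{ac}$ lies within $\delta$ of a root. Every root lying in $\supp\mu_{ac}+B(0,\delta/2)$ has another root within distance $<\delta$. For the latter, take the root in an adjacent subinterval; if the root itself is the one we found, use the neighbouring one, which requires each interval to contain at least two subintervals, true once the mesh is fine.

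\emph{Step 2 (the atoms).} Each atom $x_i$ has $\mu(\{x_i\})>0$. Applying Theorem \ref{thm:properties_og_poly}(\ref{thm:density_of_zeros}) to $[x_i-\delta/2,x_i+\delta/2]$ gives $n_2$ such that each such neighbourhood contains a root for $n\ge n_2$. This yields the density statement for $\supp\mu_{pp}$, and together with Step 1 the final "moreover" claim. It also shows each such neighbourhood holds at most one root: the neighbourhood minus $\{x_i\}$ consists of two measure-zero intervals (here $2\delta<\Delta$ is used), each containing at most one zero by Theorem \ref{thm:properties_og_poly}(\ref{thm:measure_0_sets}). So a priori there could be two roots, one on each side of $x_i$; I must exclude that. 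The route I would try first is interlacing, Theorem \ref{thm:properties_og_poly}(\ref{thm:interlacing}), combined with a Christoffel-function / Gauss-quadrature argument. Gauss quadrature with $n$ nodes integrates $\prod_{k\neq i}(x-x_{k,n})^2$ exactly. If two nodes sat in the punctured neighbourhood, the measure-zero gap between them forces the polynomial $(x-x_{k,n})(x-x_{k+1,n})$ to be negative only on a $\mu$-null set, and the standard argument (replace the pair by a double node) yields a contradiction as in the proof of (\ref{thm:measure_0_sets}). Showing two roots flanking $x_i$ closely is impossible is the main obstacle. If this fails, I would weaken the statement's use: the implication only needs \emph{isolated} roots to be near atoms, so two roots within $\delta$ of each other near an atom are harmless provided they lie in $\supp\mu_{pp}+B(0,\delta)$. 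Hence in the first implication one may need $\supp\mu+B(0,\delta)$, and I would check whether this weakening is acceptable for the stated form.

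\emph{Step 3 (assembling the implications).} Take $n_0=\max(n_1,n_2)$, so the first implication follows by contrapositive. A root with a $\delta$-close neighbour cannot lie in a gap at distance $\ge\delta$ from $\supp\mu_{ac}$, because of the following. If it lies in a gap adjacent to an atom, its neighbour would lie in the same gap, which holds at most one root, or in the atom's $\delta/2$ window. If it lies outside $\mathrm{conv hull}\supp\mu_{ac}$, then under Assumption \ref{as:atoms_outside} the gaps there border only atoms or the outermost intervals. The case analysis uses $2\delta<\Delta$ to keep windows disjoint. For the second implication, an isolated root is not in $\supp\mu_{ac}+B(0,\delta/2)$ by Step 1. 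So it lies in an atom window, giving the first alternative, or in a gap. The gaps outside $\mathrm{conv hull}\supp\mu_{ac}$ adjoin only atoms or the extreme intervals. Since every root there is within $\delta$ of an atom or would be non-isolated, it falls in $\supp\mu_{pp}+B(0,\delta)$; otherwise it is in an inner gap, i.e. $\mathrm{conv hull}\supp\mu_{ac}\setminus\supp\mu_{ac}$.
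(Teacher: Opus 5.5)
\textbf{There is a genuine gap in Steps 2--3.} Your Step~1 is sound. Covering $\supp\mu_{ac}$ by finitely many short subintervals and applying point~\ref{thm:density_of_zeros} of Theorem~\ref{thm:properties_og_poly} is a legitimate and more elementary substitute for the paper's appeal to Totik's spacing theorem, since no rate in $n$ is claimed here. The ``moreover'' claim also follows as you say.

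The gap is the one you flag in Step~2 and then carry unresolved into Step~3. You never prove that, for large $n$, each atom window contains exactly one zero and that no other zero lies outside $\mathrm{conv}\,\supp\mu_{ac}$. Both implications need this:
\begin{itemize}
\item Two zeros within $\delta/2$ of an atom would be $\delta$-close to each other while lying at distance $>3\delta/2$ from $\supp\mu_{ac}$. This contradicts the first implication.
\item Let $b_m=\max\supp\mu_{ac}$ and let $x_1$ be the nearest atom to its right. A lone zero in $(b_m+\delta,x_1-\delta)$ is not excluded by anything in Steps~2--3, and it would contradict the second implication.
\end{itemize}
Your Step~3 sentences (``its neighbour would lie \ldots\ or in the atom's $\delta/2$ window'', ``every root there is within $\delta$ of an atom or would be non-isolated'') list these possibilities but do not exclude them.

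The quadrature route you suggest cannot close this. For two zeros flanking $x_i$, the interval between them contains the atom, which has positive mass, so the positivity argument behind point~\ref{thm:measure_0_sets} gives nothing. In fact no argument that ignores Assumption~\ref{as:atoms_outside} can work. Take a symmetric $\mu$ with $\supp\mu_{ac}=[-2,-1]\cup[1,2]$ and an atom at $0$. For even $n$, $P_n$ is even with simple zeros, so $P_n(0)\neq 0$, and the zero forced near $0$ comes with its mirror image. Weakening the first implication to $\supp\mu+B(0,\delta)$ would prove a different statement.

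\textbf{The missing step.} It combines ingredients you already listed. It is exactly what the paper's proof invokes, via points~\ref{thm:density_of_zeros} and~\ref{thm:measure_0_sets}, when it asserts that there are precisely $r$ zeros outside $\mathrm{conv}\,\supp\mu_{ac}$, each within $\delta/2$ of its atom.

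By Assumption~\ref{as:atoms_outside}, every atom lies to the right of $b_m$ or to the left of $\min\supp\mu_{ac}$. Let $x_1<\dots<x_k$ be the atoms to the right of $b_m$. Set $G_1=[b_m,x_1]$, $G_j=[x_{j-1},x_j]$, and $W_j=(x_j-\delta/2,x_j+\delta/2)$. Then:
\begin{itemize}
\item by point~\ref{thm:roots_and_support}, $(x_k,\infty)$ contains no zeros;
\item each closed gap $G_j$ contains at most one zero;
\item for large $n$, each window $W_j$ contains at least one zero;
\item since $2\delta<\Delta$, the $W_j$ are pairwise disjoint, $W_j\subset G_j\cup G_{j+1}$ for $j<k$, and $W_k\subset G_k\cup(x_k,\infty)$.
\end{itemize}
Now descend from the outermost atom. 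The zero in $W_k$ must lie in $G_k$ and exhausts it. Hence the zero in $W_{k-1}$ cannot lie in $G_k$, so it lies in $G_{k-1}$ and exhausts it. Continuing down to $G_1$, the set $(b_m,\infty)$ contains exactly $k$ zeros, one in each $W_j$. The same holds symmetrically on the left.

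With this in hand, every zero at distance $\ge\delta$ from $\supp\mu_{ac}$ is either the unique zero of an atom window or the unique zero of an inner gap, and is therefore $\delta$-isolated. Conversely, an isolated zero lies outside $\supp\mu_{ac}+B(0,\delta/2)$ by your Step~1, so it lies in an atom window or in an inner gap. Both implications then follow as you intended.
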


\begin{proof}
    We prove the statement in section \ref{appendix:interval_atom_behavior}.
\end{proof}

Last is the most general situation, in which we only demand Assumption \ref{as:measure_base} and atoms can appear in between the intervals supporting the continuous part of the measure.
\begin{theorem}
\label{thm:distinguish_general}
    Suppose we have a measure $\mu$ satisfying Assumption \ref{as:measure_base}.
    Let $P_n$ be the monic polynomials orthogonal with respect to $\mu$.
    Let 
    \begin{align*}
        x_{1,n}<x_{2,n}<...<x_{n,n}
    \end{align*}
    be the zeros of $P_n$.
    Then, for any $\delta$ such that $0<2\delta <\Delta$, there exists $n_0 \in \mathbb{N}$  (without a provable bound) and $\rho>0$, such that for all $n \geq n_0$ and arbitrary $x_{i,n}$:
\begin{align*} 
     \exists k,l \in \{ 1,2,...,n\}, i\not=l\not=k\not= i :  (| x_{i,n} - x_{k,n}& | < \delta )\land
     (| x_{i,n} - x_{l,n} | < \delta )
     \Longrightarrow 
     x_{i,n} \in \supp \mu_{ac} + B(0,\delta), 
        \\
        \exists_1 k \in \{ 1,2,...,n\}, k\not= i :  (| x_{i,n} - x_{k,n} | < \delta )
     &\Longrightarrow 
     (x_{i,n} + x_{k,n})/2 \in \supp \mu_{pp} + B(0,\delta),
     \\
        \left(\forall j \in \{1,2,...,n\}, \ j\not = i : |x_{i,n} - x_{j,n}| \geq \delta \right)
        &\land 
        \left(\forall j \in \{1,...,n,n+1\}, \ j\not = i : |x_{i,n} - x_{j,n+1}| \geq \rho \right) \Longrightarrow
        \\
        &\Longrightarrow
        \left(
        x_{i,n}\not \in \supp \mu + B(0,\delta) \mathrm{, \ i.e. pollution}\right),
        \\
         \left(\forall j \in \{1,2,...,n\}, \ j\not = i : |x_{i,n} - x_{j,n}| \geq \delta \right)
        &\land 
         \exists k \in \{ 1,...,n,n+1\}, k\not= i :  (| x_{i,n} - x_{k,n+1} | < \rho )
         \Longrightarrow
         \\
         &\Longrightarrow
        \left(x_{i,n} \in \supp \mu_{pp}+ B(0,\delta)    \right).
\end{align*}
Moreover, for every point $y$ in $\supp \mu$, there exists $x_{k,n}$ such that
    \begin{align*}
        |y-x_{k,n}|<\delta.
    \end{align*}
\end{theorem}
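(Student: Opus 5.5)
We use the three standard facts of Theorem \ref{thm:properties_og_poly} (at most one zero in any gap of $\supp\mu$, density of zeros in every interval of positive mass, interlacing), together with the regularity of $\mu$ guaranteed by $d\mu_{ac}\geq c_0$ on each $[a_j,b_j]$. Regularity in the sense of Stahl--Totik, via \cite{Simon2007_equilibrium,Totik2009}, gives that the normalized zero counting measures converge weakly to the equilibrium measure $\omega_E$ of $E=\supp\mu$. Moreover, $\omega_E$ has density bounded below on compact subintervals of each $(a_j,b_j)$ and behaves like $|x-a_j|^{-1/2}$ near the endpoints.

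The first step is the \emph{bulk structure}. Fix $\delta$ and cover each $[a_j,b_j]$ by finitely many closed subintervals $I$ of length $<\delta/3$. Each such $I$ has $\omega_E(I)>0$, so for $n\ge n_0$ each $I$ contains at least $c\,n\,\omega_E(I)\geq 3$ zeros of $P_n$. This yields the final claim about $\supp\mu_{ac}$. It also shows that consecutive zeros inside $[a_j-\delta/3,b_j+\delta/3]$ are $<\delta$ apart, so every such zero has at least two $\delta$-neighbours.

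The second step is the \emph{gaps}. Each connected component $G$ of $\mathbb{R}\setminus\supp\mu$ lying between two consecutive components of $\supp\mu$ contains at most one zero of $P_n$, by item \ref{thm:measure_0_sets}. A zero with two $\delta$-close neighbours must therefore have at least one neighbour in $\supp\mu$. If it lay in $\supp\mu_{pp}$'s gap region away from $\supp\mu_{ac}+B(0,\delta)$, the only possibilities are one zero on each side of an atom in adjacent gaps, or zeros in the two gaps around an atom. Since $2\delta<\Delta$, a triple of mutually $\delta$-close zeros cannot straddle a single atom. This gives the first implication.

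Near an atom $x_j$, the gaps $(x_j-\Delta,x_j)$ and $(x_j,x_j+\Delta)$ each hold at most one zero. Conversely, by the Christoffel-function bound $\lambda_n(x_j)\leq\mu(\{x_j\})$ together with $\lambda_n(x_j)\to\mu(\{x_j\})>0$, at least one zero converges to $x_j$; this is the argument of Theorem \ref{lemma:atom_convergence}, localized via the Markov--Stieltjes inequalities. Hence near each atom there are either one zero or two $\delta$-close zeros. In the latter case both lie within $\delta$ of $x_j$, since they are separated by $x_j$ and lie in the adjacent gaps, so their midpoint is within $\delta$ of $x_j$. This gives the second implication.

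The third step, the main obstacle, is the \emph{pollution versus atom distinction} for isolated zeros. An isolated zero is either near an atom or is the single spurious zero in a gap between intervals of $\supp\mu_{ac}$. For an atom, the zeros of $P_n$ and $P_{n+1}$ both converge to $x_j$, so for $n$ large $|x_{i,n}-x_{k,n+1}|<\rho$ for any fixed $\rho$. For a pollution zero, we need a uniform positive lower bound on its distance to every zero of $P_{n+1}$. To obtain it, we would use the three-term recurrence and the Jacobi matrix. Via Theorem \ref{thm:spectrum+zeros}, a zero $x$ of $P_n$ in a gap satisfies $P_{n+1}(x)=-\beta_nP_{n-1}(x)$. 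Writing $P_{n+1}(x)=\prod_k(x-x_{k,n+1})$ and bounding $|P_{n-1}(x)|/|P_{n+1}|'$ from below through the Christoffel--Darboux identity $K_n(x,x)=\sum_k p_k(x)^2$ together with $\beta_n\le a_\infty^2$ (the recurrence coefficients are bounded by $a_\infty$ since $\supp\mu$ is bounded), we would aim for $\min_k|x-x_{k,n+1}|\geq \delta^2/(\delta+\sqrt2 a_\infty)$ whenever $x$ is $\delta$-far from $\supp\mu$ and from the other zeros of $P_n$. Such pollution zeros are $\delta$-far from $\supp\mu$ because $\delta$-close points of $\supp\mu$ are excluded by steps one and two. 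I expect the delicate part to be this quantitative estimate. If it fails, the fallback is a compactness argument: assume a subsequence of pollution zeros of $P_n$ and zeros of $P_{n+1}$ converge to a common gap point $z$, then derive a contradiction with the ratio asymptotics of $P_{n+1}/P_n$ off $\supp\mu$. This fallback yields some $\rho>0$ without an explicit value, which still suffices for the statement.
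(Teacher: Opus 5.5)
Your architecture matches the paper's proof: bulks of zeros on $\supp\mu_{ac}$ from regularity, at most one zero per gap (item \ref{thm:measure_0_sets} of Theorem \ref{thm:properties_og_poly}), density of zeros near each atom, and a comparison of $P_n$ with $P_{n+1}$ to expose pollution. For the bulks you use weak convergence of the zero-counting measures to the equilibrium measure instead of Totik's spacing result (Theorem \ref{thm:linear_spacing}). That substitute is legitimate, and arguably better suited, since Theorem \ref{thm:linear_spacing} asks for a continuous density, which Assumption \ref{as:measure_base} does not supply.

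The genuine gap is your third step, which carries the whole content of the third and fourth implications. You never prove that a polluting zero $y$ of $P_n$ with $\dist(y,\supp\mu)\ge\delta$ stays a uniform distance $\rho$ from every zero of $P_{n+1}$. You name the bound you want, $\delta^2/(\delta+\sqrt2 a_\infty)$, but the Christoffel--Darboux sketch is not carried out. Your fallback also rests on a false premise. Locally uniform ratio asymptotics of $P_{n+1}/P_n$ near infinity force $\alpha_n$ and $\beta_n$ to converge (read this off $P_{n+1}/P_n=z-\alpha_n-\beta_n z^{-1}+O(z^{-2})$). That in turn makes the essential spectrum a single interval. In the case where pollution actually arises ($m\ge2$ intervals), the coefficients are at best asymptotically almost periodic and $P_{n+1}/P_n$ has no limit.

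The missing ingredient is the Denisov--Simon theorem, which the paper quotes as Theorem \ref{thm:pollution}. Apply it with $x_0=y$ and $d=\dist(y,\supp\mu)\ge\delta$. Since $P_n(y)=0$, $P_{n+1}$ has no zero in $(y-\delta_\infty,y+\delta_\infty)$, where $\delta_\infty=d^2/(d+\sqrt2a_\infty)$. The map $d\mapsto d^2/(d+\sqrt2a_\infty)$ is increasing, so $\delta_\infty\ge\rho:=\delta^2/(\delta+\sqrt2a_\infty)$. Its core is what your sketch was reaching for: a normalized eigenvector $v$ of $J_n$ at an eigenvalue $x$ with $\dist(x,\supp\mu)=d$, extended by zero, satisfies $d\le\|(J-x)v\|=\sqrt{\beta_n}\,|v_{\mathrm{last}}|$. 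Hence its last component has size at least $d/a_\infty$. On the atom side, item \ref{thm:density_of_zeros} gives zeros of both $P_n$ and $P_{n+1}$ within $\rho/2$ of each atom for large $n$, and the fourth implication follows by contraposition. With this citation your argument becomes essentially the paper's.

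Three smaller points:
\begin{enumerate}
\item The Christoffel inequality is reversed: $\lambda_n(x_j)\ge\mu(\{x_j\})$. It is not needed anyway, since item \ref{thm:density_of_zeros} applied to $(x_j-\eta,x_j+\eta)$ already yields zeros converging to $x_j$.
\item In step two, $2\delta<\Delta$ only guarantees a single atom in the window. To exclude three zeros there (one in each adjacent gap and one exactly at the atom) you need the sharper fact $\mu\bigl((x_{k,n},x_{k+1,n})\bigr)>0$ for consecutive zeros. This follows from $\int P_n(x)^2\,[(x-x_{k,n})(x-x_{k+1,n})]^{-1}\,d\mu(x)=0$, whose polynomial integrand is nonnegative off $(x_{k,n},x_{k+1,n})$ and vanishes at the endpoints.
\item Steps one and two do not show that an isolated polluting zero is $\delta$-far from $\supp\mu$. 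Two configurations are not excluded: a gap zero at distance between $\delta/3$ and $\delta$ from an endpoint $b_j$, and one just under $\delta$ from an atom whose converging zero lies on the other side. The paper's proof passes over the same point.
\end{enumerate}
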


\begin{proof}
    We prove the statement in section \ref{appendix:interval_atom_behavior}.
\end{proof}

\section{Multiplication operator and the truncated GNS }\label{sec:gen-curto-fialkow}

Theorem \ref{thm:flat-extension} has been the backbone of many algorithmic recovery methods \citep{HeltonMcCullough2004NCPositivstellensatz, LopezQuijorna2021DetectingGNS}.
The key tool is the truncated GNS construction   \citep{LopezQuijorna2021DetectingGNS}, which builds a certain operator, whose spectrum reveals information about the underlying measure.
We further interpret this perspective through the lens of spectral theory, which allows us to derive additional insights.
Before establishing the connection between the GNS construction used to recover a finitely atomic measure and the spectral properties of the multiplication operator on $L^2(\R,\mu)$, we recall some relevant facts and notation.

Throughout the paper, we consider the Hilbert space
\begin{align*}
    \mathcal{H} = L^2(\R,\mu)
\end{align*} with $\mu$ satisfying Assumption \ref{as:measure_base}.
For such $\mu$, we define the multiplication operator by the coordinate $x$ as
$M_x \in \mathcal{B}(\mathcal{H})$ as
\begin{align*}
    M_x f(x) = xf(x) \in L^2(\R,\mu).
\end{align*}
It is a standard result \citep{Kato} that the operator $M_x$ is self-adjoint.
A key fact, central to our purposes, is that the spectrum of $M_x$  coincides with the support of the measure 
\begin{align*}
    \sigma(M_x)= \text{ess Range}_{\mu} \ (x\mapsto x) =  \text{supp} (\mu), \\  \text{ where } x\mapsto x \text{ is the identity on } \R.
\end{align*}
This observation shows that finding
$\supp \mu$ is equivalent to determining the spectrum of the multiplication operator $M_x$ on $L^2(\R,\mu)$.

Let us briefly describe the spectrum of the multiplication operator in a detail. A priori, depending on the measure $\mu$,  the space $L^2(\mathbb{R},\mu)$ may be infinite dimensional and thus one must assume that
this the spectrum has two parts: a point spectrum and a continuous spectrum \citep{Simon}.
Under mild separation assumptions (the continuous and atomic parts have disjoint supports), these precisely coincide with the atomic and the continuous part of the support of $\mu$, that is
\begin{align*}
    \sigma(M_x)
    =
    \sigma_p(M_x) 
    \cup
    \sigma_c(M_x) 
    &,\quad
    \supp(\mu) = \supp(\mu_{pp}) 
    \cup
    \supp(\mu_{ac}),
    \\
    \sigma_p(M_x) = \supp(\mu_{pp}) ,
    &\quad
    \sigma_c(M_x)
    =
    \supp(\mu_{ac}).
\end{align*}

\subsection{Spectral Analysis of Finite-Dimensional Approximations}
From a computational perspective, even when the underlying problem is infinite dimensional, it must ultimately be treated in finite-dimensional terms. Our goal is to extract information about the spectrum of the (possibly infinite-dimensional) multiplication operator $M_x$ by studying suitable finite-dimensional approximations.

To this end, we employ a Rayleigh–Ritz (Galerkin-type) scheme. 
We consider an increasing sequence of finite-dimensional subspaces of $\mathcal H$ of the form
\begin{align*}
    \mathcal H_n
    =
    \operatorname{span}
    \{ [f_0]_\mu, [f_1]_\mu, \dots, [f_n]_\mu \}
    \subset \mathcal H,
\end{align*}
where $\{f_j\}_{j\ge0}$ is a chosen generating family in $L^2(\R,\mu)$ and $[\cdot]_\mu$ denotes the corresponding equivalence class.

We then study the compressed multiplication operators
\begin{align} \label{eq:truncated_multiplication}
    M_x^{(n)}
    :=
    \mathcal P_n M_x \mathcal P_n
    \big|_{\mathcal H_n} \in \mathcal{B}(\mathcal{H}_n),
\end{align}
where $\mathcal P_n$ denotes the orthogonal projection from $\mathcal H$ onto $\mathcal H_n$.
Thus $M_x^{(n)}$ is a finite-dimensional self-adjoint operator acting on $\mathcal H_n$.
The spectral properties of these compressed operators encode structural information about the underlying measure $\mu$.
The matrix representation of $M_x^{(n)}$ and the effectiveness of this method depend crucially on the choice of the generating family $\{f_j\}_{j\ge0}$.

Since our goal is to employ the spectral information of the sequence $(M_x^{(n)})_{n=1}^{\infty}$ in order to approximate the spectrum of $M_x$, it is natural to ask: what is the precise relationship between the spectra of the approximating operators and that of the original operator? In particular, we must determine whether the sequence is spectrally inclusive and whether it suffers from spectral pollution .

Below, we present definitions of these concepts that describe spectral approximation. The definitions themselves are specializations of the definitions of \cite{Bogli2017} to our setting.

\begin{definition}[Spectral inclusivity] \label{def:spectral_inclusivity}
Let $A, \{A_n\}_{n \in \mathbb{N}}$ be a bounded self-adjoint operators on a Hilbert space.
We say that the sequence $(A_n)$ is \emph{spectrally inclusive} of $A$ if for every $\lambda \in \sigma(A)$ there exists a sequence of elements $\lambda_n \in \sigma(A_n)$, $n \in \mathbb{N}$ with $\lambda_n \to \lambda$, that is, every point of the spectrum of $A$ is the limit of spectral points of $(A_n)_{n=1}^{\infty}$.
\end{definition}

\begin{definition}[Spectral pollution]
Let $A, \{A_n\}_{n \in \mathbb{N}}$ be a bounded self-adjoint operators on a Hilbert space $\mathcal{H}$. 
We say that an element $\lambda \in \R$ is a spurious eigenvalue if there exists an infinite subset $I \subset \mathbb{N}$ and $\lambda_n \in \sigma(A_n), n \in I$ with $\lambda_n \to \lambda$ but $\lambda \not \in \sigma(A)$.
The occurrence of such a point is known as spectral pollution.
\end{definition}

\begin{definition} [Spectral exactness] \label{def:spectral_exactness}
    Let $A, \{A_n\}_{n \in \mathbb{N}}$ be a bounded self-adjoint operators on a Hilbert space.
We say that the sequence $(A_n)$ is a \emph{spectrally exact} approximation of $A$ if it is \emph{spectrally inclusive} and no \emph{spectral pollution} occurs.
\end{definition}

As we mentioned above, the effectiveness of the Rayleigh-Ritz (Galerkin-type) method depends crucially on the choice of the generating family $\{f_j\}_{j\geq0}$.
In what follows, we provide a detailed insight in  on the situation when 
\begin{align} \label{eq:polynomial_subspace}
    f_j(x) = x^j, \quad \text{ and } \quad
    \mathcal H_n
    =
    \operatorname{span}
    \{ [1]_{\mu;\mathcal{H}}, [x]_{\mu;\mathcal{H}}, \dots, [x^n]_{\mu;\mathcal{H}} \}
    \subset \mathcal H.
\end{align}
One motivation for this choice is Theorem \ref{thm:spectrum+zeros}, which identifies the spectrum of the finite-dimensional compression with the roots of orthogonal polynomials:
\begin{align} \label{eq:determinant}
    \det ( xI - \Mxn) = P_n(x),
\end{align}
because the matrix $J_n$ is the operator $\Mxn$ expressed in a particular basis.
\begin{remark}
\label{remark:roots=spectrum}
    Due to the connection (\ref{eq:determinant}), the whole Section \ref{Sec:orthogonal_polynomials} with the orthogonal polynomial machinery then encapsulates arguments for studying $\Mxn$ with the underlying subspace chosen as in (\ref{eq:polynomial_subspace}).
\end{remark}
\begin{corollary}
    Due to (\ref{eq:determinant}), all the properties of root sequences \ref{def:root_sequence} stated in Theorems \ref{lemma:atom_convergence}, \ref{thm:distinguish}, \ref{thm:distinguish_outside} and \ref{thm:distinguish_general} are true for eigenvalues of the finite-dimensional approximation $\Mxn$.
\end{corollary}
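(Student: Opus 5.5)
The corollary follows once we show that $\Mxn$, with $\mathcal H_n$ chosen as in (\ref{eq:polynomial_subspace}), is unitarily equivalent to the Jacobi matrix of Definition \ref{def:Jacobi_matrix}. Then $\sigma(\Mxn)$ equals the zero set $\mathcal R_{\dim}$ of the corresponding orthogonal polynomial, and every statement in Theorems \ref{lemma:atom_convergence}, \ref{thm:distinguish}, \ref{thm:distinguish_outside} and \ref{thm:distinguish_general} can be restated verbatim with "roots of $P_n$" replaced by "eigenvalues of $\Mxn$".

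\textbf{Step 1: an orthonormal basis.} Assume $\supp\mu$ is infinite; this holds whenever $\mu_{ac}\neq 0$. In the finitely atomic case the flat extension applies instead and the statements are vacuous or classical. Then $[1],[x],\dots,[x^n]$ are linearly independent in $L^2(\R,\mu)$, and Gram--Schmidt produces the orthonormal polynomials $p_j = P_j/\sqrt{K_j}$ with $K_j=\langle P_j,P_j\rangle_\mu$. These form an orthonormal basis of $\mathcal H_n$.

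\textbf{Step 2: the matrix of $\Mxn$.} For $f,g\in\mathcal H_n$ we have $\langle \Mxn f,g\rangle=\langle xf,g\rangle_\mu$, because $\mathcal P_n$ is self-adjoint and fixes $\mathcal H_n$. So the matrix entries are $\langle x p_i,p_j\rangle_\mu$ for $0\le i,j\le n$. Normalizing the three-term recurrence (\ref{eq:three_term}), and using $K_{j}/K_{j-1}=\beta_j$, gives
\begin{equation*}
x p_j=\sqrt{\beta_{j+1}}\,p_{j+1}+\alpha_j p_j+\sqrt{\beta_j}\,p_{j-1}.
\end{equation*}
Orthonormality then shows the matrix is tridiagonal, with diagonal $\alpha_j$ and off-diagonal entries $\sqrt{\beta_j}$. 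The $\mathcal P_n$ projection only removes the $p_{n+1}$ component, which appears solely in the last row. The matrix is therefore exactly the Jacobi matrix $J_{n+1}$.

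\textbf{Step 3: spectrum and roots.} By Theorem \ref{thm:spectrum+zeros}, $\det(xI-\Mxn)$ equals the monic orthogonal polynomial of degree $\dim\mathcal H_n$. This is (\ref{eq:determinant}) up to the paper's index convention, which is a shift by one between $n$ and $n+1$. The eigenvalues of $\Mxn$ are then precisely the roots of $P_{n+1}$, so root sequences (Definition \ref{def:root_sequence}) correspond bijectively to sequences of eigenvalues of the compressions. All asymptotic and separation statements transfer directly. The index shift is harmless because every cited theorem is an "all $n\ge n_0$" statement, and $n_0=O(1/\delta)$ is unaffected by a shift of one.

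\textbf{Main obstacle.} The only real care point is bookkeeping. One must check that $\Mxn$ as defined in (\ref{eq:truncated_multiplication}) is indeed the Rayleigh--Ritz compression whose matrix is $J$, and that the paper's indexing of $\mathcal H_n$ (dimension $n+1$) matches (\ref{eq:determinant}). For the criterion in Theorem \ref{thm:distinguish_general}, which uses $P_n$ and $P_{n+1}$, this means comparing $\Mxn$ with $M_x^{(n+1)}$. Once this identification is fixed, the corollary is immediate.
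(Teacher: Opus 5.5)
Your proposal is correct and follows the paper's own reasoning. The paper justifies the corollary in one line: the Jacobi matrix is $\Mxn$ written in a particular basis, namely the orthonormal polynomials, so its spectrum is the zero set of the orthogonal polynomial; your Steps 1--2 supply exactly that computation. You are also right that $\dim\mathcal H_n=n+1$, so strictly $\det(xI-\Mxn)=P_{n+1}(x)$ rather than $P_n(x)$ as written in (\ref{eq:determinant}), and your argument that this one-step index shift is harmless for all the cited ``for all $n\ge n_0$'' statements, including the consecutive-pair criterion of Theorem \ref{thm:distinguish_general}, is sound.
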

In Section \ref{sec:OGPoly}, we leverage the orthogonal polynomials framework to show spectral exactness of $\Mxn$ for measures $\mu$ satisfying Assumption \ref{as:measure_base}.

\subsection{Truncated GNS Construction}

For the purpose of measure extraction, the truncated GNS construction has been  studied  extensively  by \cite{curto1996}, \cite{BurgdorfKlepPovh2016_OptimizationNoncommuting}, \cite{KlepPovhVolcic2018_MinimizerExtractionIsRobust}, \cite{LopezQuijorna2021DetectingGNS}.
Let us consider univariate polynomials $\R[x]_n$ of a fixed maximum  degree $n$.
Suppose we have
a bilinear form $\phi_n(\cdot,\cdot)$ over $\mathbb{R}[x]_{n}\times \mathbb{R}[x]_{n}$ induced by a measure $\mu$ as 
\begin{align} \label{eq:bilinear_form}
\phi_n(\cdot,\cdot):
    (p,q)\mapsto  \int p(x)q(x) d\mu(x), \quad p,q \in \R[x]_n.
\end{align}
Moreover, there is a one-to-one relationship between the bilinear form $\phi_n$ and the moment matrix $M_n(\mathbf{y})$ given by
\begin{align*}
    [M_n(\mathbf{y})]_{k,l} = \phi_n(x^k,x^l), \quad k,l=0,1,...,n.
\end{align*}
The truncated GNS construction gives us a matrix representation of a GNS multiplication operator (defined below in (\ref{eq:truncated_multiplication_explicit}))   acting on a particular vector space, whose construction is a part of the GNS.
We now describe the construction.

We obtain the desired $n$-th space for the truncated GNS multiplication operator by factorizing $\R[x]_n$ by
\begin{align*}
    \ker \phi_n(\cdot,\cdot) = \{ p \in \R[x]_n: \ \phi_n(p,p)=0 \}.
\end{align*}
The result is the following Hilbert space
\begin{align} \label{eq:GNS_space}
    \mathcal{K}_n =\R[x]_n\Big|_{\ker \phi_{n}}
\end{align}
with an inner-product induced by $\phi_n(\cdot,\cdot)$ (see \cite{LopezQuijorna2021DetectingGNS} for details).

\begin{theorem}
\label{thm:inclusions_and_isomorphisms}
Suppose that we have a sequence of bilinear forms $(\phi_n)_{n\geq1}$ or equivalently moment matrices $(M_n(\mathbf{y}))_{n\geq1}$.
In dimension one, exactly one of the following happens:      
    \begin{align}
    \label{eq:inclusions_flat}
    \text{1. } \quad & \mathcal{K}_1 \subsetneq \mathcal{K}_{2}
        \subsetneq...\subsetneq \mathcal{K}_{n_0} \simeq \mathcal{K}_{n_0+1}
        \simeq ... \simeq \mathcal{K}_{\infty}
        , \text{ or}
        \\ \label{eq:inclusions_non_flat}
         \text{2. } \quad &
        \mathcal{K}_1 \subsetneq \mathcal{K}_{2}
        \subsetneq 
        ...\subsetneq 
        \mathcal{K}_{n_0} \subsetneq \mathcal{K}_{n_0+1}
        \subsetneq ... \subsetneq\mathcal{K}_{\infty}
       .
    \end{align}
To be specific, (\ref{eq:inclusions_flat}) happens if and only if the flat extension condition (\ref{eq:flat-extension-prop}) holds for $n_0$ and 
(\ref{eq:inclusions_non_flat}) holds if the flat extension property (\ref{eq:flat-extension-prop}) is never satisfied.
\end{theorem}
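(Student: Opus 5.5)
The proof reduces everything to one finite-dimensional linear algebra fact: $\dim \mathcal{K}_n = \rank M_n(\mathbf{y})$.

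\textbf{Step 1 (embeddings and dimensions).} Since $\phi_n$ is positive semidefinite, Cauchy--Schwarz gives
\[
\ker \phi_n = \{p : \phi_n(p,q)=0 \ \forall q\} = \{p : M_n(\mathbf{y})\vec p = 0\}.
\]
Hence $\dim \mathcal{K}_n = (n+1) - \dim\ker M_n(\mathbf{y}) = \rank M_n(\mathbf{y})$. Moreover $\phi_{n+1}$ restricted to $\R[x]_n$ equals $\phi_n$, so $\ker\phi_{n+1}\cap \R[x]_n = \ker \phi_n$. The inclusion $\R[x]_n\hookrightarrow \R[x]_{n+1}$ therefore descends to an isometric injection $\mathcal{K}_n \hookrightarrow \mathcal{K}_{n+1}$. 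This gives a nested chain, with $\mathcal{K}_\infty$ the union (or completion) of the $\mathcal{K}_n$. Each inclusion is strict exactly when $\rank M_{n+1} > \rank M_n$, and is an isomorphism exactly when the ranks agree, i.e. when the flatness condition \eqref{eq:flat-extension-prop} holds at $n$.

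\textbf{Step 2 (flatness propagates).} We show that once $\rank M_{n_0} = \rank M_{n_0+1}$, the ranks stay equal for all larger indices. In one dimension this is the Hankel rank argument. Equality of ranks means the class of $x^{n_0+1}$ in $\mathcal{K}_{n_0+1}$ lies in the image of $\mathcal{K}_{n_0}$, so there is $q\in\R[x]_{n_0}$ with $p := x^{n_0+1}-q \in \ker\phi_{n_0+1}$. We want to show $xp \in \ker \phi_{n_0+2}$. Here positivity is used: the moments come from a measure $\mu$, so $\phi(p,p)=\int p^2\,d\mu = 0$. Then $p=0$ $\mu$-a.e., hence $xp = 0$ $\mu$-a.e., and $xp\in\ker\phi_m$ for every $m$. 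Consequently $x^{n_0+2}\equiv x^{n_0+2}-xp$, which has degree at most $n_0+1$, and that in turn reduces to degree at most $n_0$. So $\mathcal{K}_{n_0+2}=\mathcal{K}_{n_0+1}$, and by induction all later spaces coincide, as does $\mathcal{K}_\infty$. If the moments are only pseudo-moments, I would instead invoke the Curto--Fialkow recursive-generation (rank-stability) argument behind Theorem~\ref{thm:flat-extension}.

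\textbf{Step 3 (dichotomy).} Ranks are nondecreasing, because $M_n$ is a principal submatrix of $M_{n+1}$. Step 2 then leaves only two possibilities. Either the ranks strictly increase forever, which is case 2 and means flatness never holds. Or there is a first $n_0$ with equal ranks, and the chain stabilizes from then on, which is case 1. Before $n_0$ the inclusions are strict by minimality of $n_0$. The two cases are mutually exclusive by construction.

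\textbf{Main obstacle.} The delicate point is Step 2, the propagation of flatness. With a genuine measure the a.e.-vanishing argument handles it cleanly. For abstract positive semidefinite Hankel data the propagation can fail at the boundary (the truncated "singular" cases). I would note this caveat, and restrict to sequences $(M_n)_{n\ge1}$ of a single positive functional defined on all of $\R[x]$, as the statement implicitly assumes.
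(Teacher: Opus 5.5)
Your argument is correct and follows the same skeleton as the paper: strict inclusion without flatness, isomorphism with flatness, and persistence of flatness. It is, however, more self-contained.

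The paper starts from a one-dimensional dichotomy: for each $n$, either $\rank M_n(\mathbf{y})=\rank M_{n+1}(\mathbf{y})$ or $\ker\phi_n=\ker\phi_{n+1}$. In the second case it notes that the cosets $p+\ker\phi_n$ and $p+\ker\phi_{n+1}$ coincide for $p\in\R[x]_n$, so $\mathcal{K}_n\subsetneq\mathcal{K}_{n+1}$ is a literal set inclusion. For the flat case it cites Proposition 5.16 of \cite{LopezQuijorna2021DetectingGNS}, and it asserts without proof that flatness at $n_0$ persists for all $n\ge n_0$.

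You instead use $\dim\mathcal{K}_n=\rank M_n(\mathbf{y})$ together with $\ker\phi_{n+1}\cap\R[x]_n=\ker\phi_n$. This handles both cases at once: the canonical map $\iota_n\colon[p]_{\phi_n}\mapsto[p]_{\phi_{n+1}}$ is always an isometric embedding, and it is onto exactly under flatness. The paper's dichotomy is itself a consequence of this dimension count, which it leaves implicit. The only cost is that your $\subsetneq$ means a proper canonical embedding rather than a literal inclusion of coset sets. That is harmless, since the statement's $\simeq$ is an identification via $\iota_n$ anyway.

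Your Step 2 supplies the propagation argument the paper omits, and your caveat is accurate. Propagation needs positivity on all of $\R[x]$, not merely of each truncated Hankel matrix. For example, $\mathbf{y}=(1,0,1,0,1,0,2)$ gives positive semidefinite $M_1,M_2,M_3$ with $\rank M_1=\rank M_2=2<\rank M_3=3$. In the paper's setting the $\phi_n$ are induced by $\mu$, so your a.e.\ argument applies. For a positive functional $L$ on $\R[x]$ you would not need Curto--Fialkow either: Cauchy--Schwarz gives $L(x^2p^2)^2\le L(p^2)\,L(x^4p^2)=0$, so the kernel is closed under multiplication by $x$.
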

\begin{proof}
    The proof is in Section \ref{proof:inclusions_and_isomorphisms}.
\end{proof}

The action of the GNS multiplication operator is
\begin{align} \label{eq:truncated_multiplication_explicit}
    M_{x;\text{GNS}}^{(n)} :
    \mathcal{K}_n 
    \longrightarrow \mathcal{K}_{n+1}
    \longrightarrow 
    \mathcal{K}_n
    :
    [p]_{\ker \phi_n} \mapsto  [xp]_{\ker \phi_{n+1}} \mapsto \Pi_n \left( [xp]_{\ker \phi_{n+1}} \right),
\end{align}
where $\Pi_n$ is:
\begin{itemize}
    \item the orthogonal projection from $\mathcal{K}_{n+1}$ onto $\mathcal{K}_n$ if $\mathcal{K}_n \subsetneq \mathcal{K}_{n+1}$, or
    \item
    $\iota^{-1}_n$,
    the inverse  of the canonical isomorphism $\iota_n$  between
    $\mathcal{K}_n$ and $\mathcal{K}_{n+1}$ (see \ref{eq:canonical_isomorphism} for precise definition) if $\mathcal{K}_n \simeq \mathcal{K}_{n+1}$.
\end{itemize}

We now rigorously describe the relationship between the GNS multiplication operator $M_{x;\text{GNS}}^{(n)}$ and the finite-dimensional compressions $M_x^{(n)}$ (\ref{eq:truncated_multiplication}) with the underlying subspace $\mathcal{K}_n$ as in (\ref{eq:polynomial_subspace}).
\begin{lemma}
\label{lemma:isomorphism}
    Suppose $\mu$ is a Borel measure on $\R$ which satisfies Assumption \ref{as:measure_base} and  induces the bilinear form $\phi_n$ (\ref{eq:bilinear_form}).
    Consider the three following sets
    \begin{align*}
    \mathcal{H}_n=
    \text{span } \{[1]_{\mu;\mathcal{H}}, [x]_{\mu;\mathcal{H}}, ..., [x^n]_{\mu;\mathcal{H}} \} \subset L^2(\R,\mu), \quad \R[x]_n \Big|_{\mu}, \quad 
        \mathcal{K}_n =\R[x]_n \Big|_{\ker \phi_n}.
    \end{align*}
    Then 
    \begin{align} \label{eq:set_equality}
    \text{1.} \quad &
        \R[x]_n \Big|_{\mu }
        =
        \R [x]_n \Big|_{\ker \phi_n} = \mathcal{K}_n,
    \\\label{eq:Hilbert_space_equality}
    \text{2.}\quad  &
        \mathcal{K}_n=
        \Big(
            \R[x]_n \Big|_{\mu} ; \phi_n(\cdot,\cdot) \Big) \simeq 
            \Big(
            \text{span } \{[1]_\mu, [x]_\mu, ..., [x^n]_\mu \} ; \langle \cdot, \cdot \rangle_{L^2(\R,\mu)}\Big) = \mathcal{H}_n,   \end{align}
            i.e. $\mathcal{K}_n$ and $\mathcal{H}_n$ are isometrically isomorphic.
\end{lemma}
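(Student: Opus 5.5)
We interpret $\R[x]_n\big|_{\mu}$ as the quotient of $\R[x]_n$ by the $\mu$-null polynomials $N_\mu := \{p \in \R[x]_n : p = 0 \ \mu\text{-a.e.}\}$. Likewise, $\R[x]_n\big|_{\ker\phi_n}$ is the quotient by $\ker\phi_n = \{p : \phi_n(p,p)=0\}$. The proof of \eqref{eq:set_equality} then reduces to showing that these two subspaces coincide. The isometry in \eqref{eq:Hilbert_space_equality} will come from the natural map $[p]\mapsto [p]_{\mu;\mathcal H}$, whose well-definedness and isometry property both follow from the definition \eqref{eq:bilinear_form} of $\phi_n$.

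\textbf{Step 1.} We check that $\ker\phi_n$ is a linear subspace, which is needed for the quotient to make sense. Since $\phi_n$ is a positive semidefinite symmetric bilinear form, the Cauchy--Schwarz inequality $|\phi_n(p,q)|^2\le \phi_n(p,p)\phi_n(q,q)$ holds. Hence $\phi_n(p,p)=0$ is equivalent to $\phi_n(p,q)=0$ for all $q$, so $\ker\phi_n$ is the radical of $\phi_n$ and in particular a subspace.

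\textbf{Step 2.} We show $\ker\phi_n = N_\mu$. We have $\phi_n(p,p)=\int p^2\,d\mu$, and $\int p^2\,d\mu=0$ holds if and only if $p^2=0$ $\mu$-a.e., that is, if and only if $p=0$ $\mu$-a.e. This gives the set equality \eqref{eq:set_equality}. We also note that under Assumption~\ref{as:measure_base} with $\mu_{ac}\neq 0$ the support is infinite, so a nonzero polynomial cannot vanish on it. Both kernels are then trivial and all three spaces are simply $\R[x]_n$ with dimension $n+1$. If $\mu_{ac}=0$, the kernels are the multiples of $\prod_i (x-x_i)$ when $r\le n$, and the argument above still applies verbatim.

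\textbf{Step 3.} We define $T:\mathcal K_n\to\mathcal H_n$ by $T[p]_{\ker\phi_n}=[p]_{\mu;\mathcal H}$. By Step 2 the map $T$ is well defined and injective. It is linear, and it is surjective because $\mathcal H_n$ is by definition spanned by the classes of the monomials $x^j$, $j\le n$, each of which is the image of $[x^j]_{\ker\phi_n}$. For isometry, we have $\langle Tp,Tq\rangle_{L^2(\R,\mu)}=\int pq\,d\mu=\phi_n(p,q)$, which is the inner product on $\mathcal K_n$; this inner product is well defined on classes by Step 1.

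The only real subtlety is bookkeeping. One must make precise that the notation $\R[x]_n\big|_{\mu}$ means the quotient by $\mu$-a.e.\ equality, and that the inner product on $\mathcal K_n$ is independent of the chosen representatives. The latter is exactly the radical property from Step 1, so that step is the one to state carefully. Everything else is a direct unwinding of definitions.
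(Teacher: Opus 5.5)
Your proposal is correct and follows essentially the same route as the paper: identify the two quotient relations via $\int p^2\,d\mu=0 \iff p=0$ $\mu$-a.e., then check that the natural map $T$ is well defined, injective, surjective and isometric. Your Step 1 (using Cauchy--Schwarz to show that $\ker\phi_n$ is a subspace and that the inner product is well defined on classes) is a detail the paper leaves implicit, and your isometry check via inner products rather than norms is equivalent by polarization.
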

\begin{proof}
    The proof is in Section \ref{proof:isomorphisms}.
\end{proof}

\begin{theorem} \label{thm:unitary_equivalence}
    The operators $\Mxn$ and  $M_{x;\text{GNS}}^{(n)}$ are unitarily equivalent via an operator
    \begin{align*}
    T:
\mathbb{R}[x]_n\big|_\mu
\longrightarrow
\mathcal{H}_n:[p]_{\mu;\mathbb{R}[x]_n} \mapsto[p]_{\mu;L^2(\R,\mu)},
\end{align*}
    i.e. $T \Mxn = M_{x;\text{GNS}}^{(n)}T$, which results into the following commuting diagram:
    \[
\begin{tikzcd}
\mathcal K_n 
\arrow[r,"M_{x;\text{GNS}}^{(n)}"] 
\arrow[d,"T"']
&
\mathcal K_n 
\arrow[d,"T"]
\\
\mathcal H_n
\arrow[r,"\Mxn"]
&
\mathcal H_n
\end{tikzcd}
\]
\end{theorem}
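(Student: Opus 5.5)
Note first that the direction of the intertwining relation in the statement must be read with $T:\mathcal K_n\to\mathcal H_n$. The diagram then says $T M_{x;\text{GNS}}^{(n)} = \Mxn T$, and this is the identity I will prove; the displayed ``$T\Mxn = M_{x;\text{GNS}}^{(n)}T$'' is the same relation with the roles of the two spaces swapped. The plan has three steps. Show that $T$ is a well-defined unitary using Lemma \ref{lemma:isomorphism}. Then verify the intertwining on an arbitrary class $[p]$ by comparing the two definitions of ``multiply by $x$, then project''. Finally, check that the projections on the two sides correspond under $T$.

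\emph{Step 1 ($T$ is unitary).} By (\ref{eq:set_equality}), a polynomial $p\in\R[x]_n$ lies in $\ker\phi_n$ iff $\int p^2\,d\mu=0$ iff $[p]_{\mu;L^2}=0$. Hence $T$ is well defined and injective, and it is surjective onto $\mathcal H_n$ by definition of $\mathcal H_n$. The identity $\phi_n(p,q)=\langle [p],[q]\rangle_{L^2(\R,\mu)}$ holds by (\ref{eq:bilinear_form}), so $T$ is isometric, which is exactly (\ref{eq:Hilbert_space_equality}). Linearity is clear, so $T$ is unitary.

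\emph{Step 2 (matching the embeddings).} The same argument at level $n+1$ gives a unitary $T_{n+1}:\mathcal K_{n+1}\to\mathcal H_{n+1}$ with $T_{n+1}|_{\mathcal K_n}=T$. Here $\mathcal K_n\subseteq\mathcal K_{n+1}$ via $[p]_{\ker\phi_n}\mapsto[p]_{\ker\phi_{n+1}}$, which is well defined because $\phi_{n+1}$ restricts to $\phi_n$. The map $T_{n+1}$ sends $[xp]_{\ker\phi_{n+1}}$ to $[xp]_{\mu}=M_x[p]_\mu$. Since $T_{n+1}$ is a unitary mapping the subspace $\mathcal K_n$ onto $\mathcal H_n$, it conjugates the orthogonal projection $\Pi_n:\mathcal K_{n+1}\to\mathcal K_n$ into the orthogonal projection $\mathcal H_{n+1}\to\mathcal H_n$. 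Because $\mathcal H_n\subseteq\mathcal H_{n+1}$, the latter coincides with $\mathcal P_n$ restricted to $\mathcal H_{n+1}$.

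\emph{Step 3 (intertwining).} For $[p]\in\mathcal K_n$, combining the two steps gives
\begin{align*}
T M_{x;\text{GNS}}^{(n)}[p] = T\,\Pi_n [xp]_{\ker\phi_{n+1}} = \mathcal P_n T_{n+1}[xp]_{\ker\phi_{n+1}} = \mathcal P_n M_x \mathcal P_n [p]_\mu = \Mxn T[p],
\end{align*}
where $\mathcal P_n[p]_\mu=[p]_\mu$ because $[p]_\mu\in\mathcal H_n$.

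\emph{The flat case.} Here $\Pi_n=\iota_n^{-1}$ rather than an orthogonal projection. Under Assumption \ref{as:measure_base} with $\mu_{ac}\neq0$, the support of $\mu$ is infinite, so by Theorem \ref{thm:inclusions_and_isomorphisms} only case (\ref{eq:inclusions_non_flat}) occurs and this case never arises. If one wants the result also for purely atomic $\mu$ (when $\mu_{ac}=0$), then $\mathcal H_n=\mathcal H_{n+1}$ for $n\ge n_0$. The canonical isomorphism $\iota_n$ corresponds under $T$, $T_{n+1}$ to the identity map, so $\iota_n^{-1}$ corresponds to the identity. This is again the orthogonal projection $\mathcal P_n$ onto $\mathcal H_n=\mathcal H_{n+1}$, and the same chain of equalities goes through.

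\emph{Expected obstacle.} The main difficulty is the bookkeeping: I must justify that $T_{n+1}$ maps the copy of $\mathcal K_n$ inside $\mathcal K_{n+1}$ exactly onto $\mathcal H_n$, compatibly with $T$. This is precisely what lets $T_{n+1}$ conjugate $\Pi_n$ into $\mathcal P_n$.
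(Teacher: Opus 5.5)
Your proposal is correct and follows the paper's route: $T$ is an isometric isomorphism (Lemma~\ref{lemma:isomorphism}), combined with the intertwining relation. The difference is that you actually verify the intertwining, by showing that $T_{n+1}$ conjugates $\Pi_n$ into $\mathcal{P}_n|_{\mathcal{H}_{n+1}}$, whereas the paper only asserts it. Your correction of the direction is also right: the paper's proof writes $\Mxn = T^{-1}M_{x;\text{GNS}}^{(n)}T$, which has the same reversal as the statement, while the identity that holds (and that the diagram encodes) is $T M_{x;\text{GNS}}^{(n)} = \Mxn T$.
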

\begin{proof}
    The proof is a result of the fact that $T$ is an isometric isomorhipsm (as we show in the proof of Lemma \ref{lemma:isomorphism}) and that
    \begin{align*}
        M_x^{(n)}= T^{-1} M_{x;\text{GNS}}^{(n)} T.
    \end{align*}
\end{proof}

\begin{remark}
    Since $M_x^{(n)}$ and $M_{x;\text{GNS}}^{(n)}$ are unitarily equivalent, they have identical spectral properties.
\end{remark}

\begin{remark}
\label{remark:TSSOS}
    The robust extraction method of TSSOS \cite{wang2021tssos} actually implements the procedure described in the current section. The library TSSOS implements the robust GNS procedure of \cite{KlepPovhVolcic2018_MinimizerExtractionIsRobust} through the function \verb|extract_solutions_robust|.  When we apply this function to a positive definite moment matrix (of a measure that is supported over infinitely many points), we obtain the matrix representation of the operations : \begin{equation}
        \mathbb{R}[x]_{\leq d}\xrightarrow[]{x_i.}\mathbb{R}[x]_{\leq d+1}\xrightarrow[]{\pi_\phi}\mathbb{R}[x]_{\leq d}
    \end{equation}
    in the basis spanned by orthogonal polynomials. Furthermore, in the univariate situation, the multiplication matrix there obtained is equivalent to the Jacobi matrix that we have introduced in the beginning of section \ref{Sec:orthogonal_polynomials}. The multiplication matrix given by the GNS procedure will produce the roots of the orthogonal polynomials in 1D.
\end{remark}


\subsection{The spectrum of $M_x$ when $\mu_{ac} = 0$ }
\label{Sec:atomic_measure}
The observations above hold if the bilinear form $\phi_n$ is generated by an arbitrary measure satisfying assumption \ref{as:measure_base}.
Now, we further discuss the properties of $\mathcal{H}_n$ (\ref{eq:polynomial_subspace}), $\mathcal{K}_n$ (\ref{eq:GNS_space}) in case when the measure is finitely atomic, i.e. $\mu_{ac} = 0$.
Finitely atomic measure implies that the flatness condition (\ref{eq:flat-extension-prop}) must eventually be satisfied for high enough $n$.
\cite[Proposition 5.16]{LopezQuijorna2021DetectingGNS}
states the following equivalence:
\begin{align*}
    \text{ The flatness condition } (\ref{eq:flat-extension-prop}) \text{ is satisfied } 
    \Longleftrightarrow
\mathcal{K}_n \simeq \R^r \simeq \mathcal{K}_{n+1}. 
\end{align*}
This means that flatness condition implies that the spaces $\mathcal{K}_n, \mathcal{K}_{n+1}$ in the definition of $ M_{x;\text{GNS}}^{(n)}$ in (\ref{eq:truncated_multiplication_explicit}) are isomorphic and therefore, we do not lose any spectral information by applying $\Pi_n$ to $[xp]_{\ker \phi_{n+1}}$. Furthermore, the unitary equivalence \ref{thm:unitary_equivalence} implies that under the flatness assumption (\ref{eq:flat-extension-prop}), 
$M_x^{(n)}$ can be expressed as a $r\times r$ matrix as the
following series of isomorphisms hold
\begin{align} \label{eq:isomorphisms}
 \mathcal{K}_{n+1} \simeq \mathcal{K}_n 
    = \R[x]_n \Big|_{\mu \text{ equal a.e.}}
    \simeq \mathcal{H}_n = \mathcal{H} = L^2(\R,\mu) \simeq \R^r,
\end{align}
which technically allows us to accurately extract the whole spectrum of the  operator $M_x$ as it can be represented using a finite dimensional basis. 

We conclude with the observation that in the case of an atomic measure, the sequence of restricted multiplication operators $M_x^{(n)}$ is  a \emph{spectrally exact} \ref{def:spectral_exactness} approximation of the underlying operator $M_x$ on $L^2(\R,\mu)$ and that the approximating sequence of eigenvalues $(\lambda_n)_n$ in the sense of definition \ref{def:spectral_inclusivity} stabilizes after finite amount of steps.

\subsection{The spectrum of $M_x$ when $\mu_{ac} \neq 0$.}
\label{sec:OGPoly}

Assume that the measure $\mu$ from assumption \ref{as:measure_base} satisfies $\mu_{ac} \neq 0$  or equivalently the spectrum of $M_x$ acting on $L^2(\R,\mu)$ has a non-zero continuous and  discrete part. 
Under this assumption, the domain $\mathcal{H}_n$ of the approximation operator $\Mxn$ (or equivalently the domain $\mathcal{K}_n$ of the GNS multiplication operator $M_{x;\text{GNS}}^{(n)}$) never becomes isomorphic to the whole space $\mathcal{H}$ as in (\ref{eq:isomorphisms}). Therefore, for every $n\in \mathbb{N}$, the operator $\Mxn$
represents only a finite-dimensional approximation of the multiplication operator $M_x$.

This is precisely the framework in which one can apply the classical results concerning the zeros of orthogonal polynomials associated with measures of infinite support. In this setting, the orthogonal polynomial sequence $\{P_n\}_{n\geq0}$ tied to the space $L^2(\R,\mu)$ is well-defined and non-degenerate, allowing the full machinery of zero distribution theory (see Section \ref{Sec:orthogonal_polynomials}) to be employed in the sense of Remark \ref{remark:roots=spectrum}.

Concerning \emph{spectral exactness}, point \ref{thm:density_of_zeros} of
Theorem \ref{thm:properties_og_poly} alone establishes \emph{spectral inclusivity}.
Theorem \ref{lemma:atom_convergence}  ensures there is no \emph{spectral pollution} as  $\Mxn$ has precisely $r$ eigenvalues outside of $\supp \mu_{ac}$,  corresponding to the $r$ atoms and they converge to the atoms exponentially quickly as $ n \to \infty$.
\begin{theorem}[Spectral exactness] \label{thm:spectral_exactness}
    Suppose $\mu$
    satisfies Assumption \ref{as:measure_base}.
    Then, the sequence of finite-dimensional truncations $M_x^{(n)}$ of $M_x$ is \emph{spectrally inclusive} and we can identify the pollution.
    Moreover, if $\mu$ in addition satisfies Assumption \ref{as:single_interval}, $M_x^{(n)}$ is \emph{spectrally exact}.
\end{theorem}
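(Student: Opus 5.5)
The argument runs entirely through the identification in Remark~\ref{remark:roots=spectrum}. By Theorem~\ref{thm:unitary_equivalence} and (\ref{eq:determinant}), $\sigma(M_x^{(n)})=\mathcal{R}_n$, the zero set of $P_n$. Since $\sigma(M_x)=\supp\mu$, each of the three claims becomes a statement about root sequences in the sense of Definition~\ref{def:root_sequence}. When $\mu_{ac}\neq 0$ the support is infinite, so $P_n$ exists for every $n$. When $\mu_{ac}=0$ the claim is already covered by Section~\ref{Sec:atomic_measure}, and we set that case aside.

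\emph{Spectral inclusivity.} Fix $\lambda\in\supp\mu$ and, for $k\in\mathbb{N}$, the interval $I_k=[\lambda-1/k,\lambda+1/k]$. By the definition of support, $\mu(I_k)>0$. Point~\ref{thm:density_of_zeros} of Theorem~\ref{thm:properties_og_poly} then gives $N_k$ such that $P_n$ has a zero in $I_k$ for all $n\ge N_k$; we may take $N_1<N_2<\dots$ increasing. For $N_k\le n<N_{k+1}$ choose $\lambda_n\in\mathcal{R}_n\cap I_k$, and for $n<N_1$ choose $\lambda_n\in\mathcal{R}_n$ arbitrarily. Then $|\lambda_n-\lambda|\le 1/k$ on the $k$-th block, so $\lambda_n\to\lambda$. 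This is exactly Definition~\ref{def:spectral_inclusivity}.

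\emph{Identification of pollution under Assumption~\ref{as:measure_base}.} Suppose $\lambda_n\to\lambda\notin\supp\mu$ along an infinite index set. Take a closed interval $[a',b']$ around $\lambda$ with $\mu([a',b'])=0$. By point~\ref{thm:measure_0_sets} of Theorem~\ref{thm:properties_og_poly}, each $P_n$ has at most one zero there. Since $\supp\mu$ is a finite union of $m$ intervals and $r$ points, its complement inside the convex hull has finitely many bounded components (gaps). By point~\ref{thm:roots_and_support}, pollution can therefore only occur in these gaps, with at most one spurious eigenvalue per gap for each $n$.

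To say that pollution "can be identified" is to invoke the classification in Theorem~\ref{thm:distinguish_general}. For $n\ge n_0$, an isolated root is labelled as pollution exactly when it has no $\rho$-close root of $P_{n+1}$, and the theorem guarantees that such roots are not within $\delta$ of $\supp\mu$. Letting $\delta\to 0$, every limit of eigenvalues that survives the filter lies in $\supp\mu$.

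\emph{Spectral exactness under Assumption~\ref{as:single_interval}.} We rescale so that $\supp\mu_{ac}=[-1,1]$. By Theorem~\ref{lemma:atom_convergence}, for large $n$ exactly $r$ roots of $P_n$ lie outside $[-1,1]$, and these can be enumerated so that they converge to the $r$ atoms. Now suppose $\lambda_n\to\lambda$ along an infinite index set. If $\lambda\in[-1,1]$, then $\lambda\in\supp\mu$. Otherwise, eventually $\lambda_n\notin[-1,1]$, so $\lambda_n$ is one of the $r$ outlying roots $x_{j_n,n}$. Passing to a subsequence on which $j$ is constant, $\lambda=x_j$ is an atom and hence lies in $\supp\mu$. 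So no spurious eigenvalue exists, and together with inclusivity this gives Definition~\ref{def:spectral_exactness}.

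The main obstacle is the middle claim. "Identify the pollution" is not a standard notion, so the argument must rely on the statement of Theorem~\ref{thm:distinguish_general}, which is proved later in the paper. The honest content proved here is that spurious limits are confined to the finitely many gaps, with at most one spurious root per gap, and that the $P_n$/$P_{n+1}$ criterion of Theorem~\ref{thm:distinguish_general} separates them from the atoms.
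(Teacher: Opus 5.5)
Your proposal is correct and follows the paper's route. Like the paper, you identify $\sigma(M_x^{(n)})$ with the zero set of the orthogonal polynomial via (\ref{eq:determinant}), obtain inclusivity from point~\ref{thm:density_of_zeros} of Theorem~\ref{thm:properties_og_poly}, delegate the identification of pollution to Theorem~\ref{thm:distinguish_general}, and exclude spurious limits under Assumption~\ref{as:single_interval} using Theorem~\ref{lemma:atom_convergence}; beyond the paper's one-line corollary, you supply the diagonal choice of $\lambda_n$ and the pigeonhole argument on the atom labels that it leaves implicit.
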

\begin{proof}
    Since we know that the spectrum of $M_x^{(n)}$ corresponds to the zeros of orthogonal polynomials, this is a direct corollary of
    Theorems 
    \ref{thm:properties_og_poly}, \ref{lemma:atom_convergence}, \ref{thm:distinguish}, \ref{thm:distinguish_outside} and \ref{thm:distinguish_general}.
\end{proof}

\section{Numerical illustrations}\label{sec:num}

To corroborate our results of Section \ref{sec:results}, we present 
results of numerical experiments with Algorithm~\ref{alg:atom_extraction_theoretic} in two settings.
In the first set of experiments, there is a single interval
in $\mu_{ac}$ and multiple points in $\mu_{pp}$, as foreseen by Assumption~\ref{as:single_interval}.
In the second set of experiments, there are two disjoint intervals in 
$\mu_{ac}$ and multiple points outside of these in $\mu_{pp}$, which is covered by  Assumption~\ref{as:atoms_outside}.
Recall that Theorem~\ref{thm:algorithm_proof} establishes that Algorithm~\ref{alg:atom_extraction_theoretic} solves Problem~\ref{prob:reconstr} in finite time under Assumption~\ref{as:measure_base} and in polynomial time when Assumption~\ref{as:single_interval} additionally holds, 
assuming  exact moments of the underlying measure $\mu$ are known.
The performance of Algorithm~\ref{alg:atom_extraction_theoretic} is governed by five parameters: the number of atoms and continuous components (fixed at one atom and one versus two intervals throughout); the separation distance $\Delta > 0$; the target precision $\varepsilon > 0$; the length $r > 0$ of the intervals supporting the continuous component; and the center $c \in \mathbb{R}$ of those intervals.
Across both settings, we provide a systematic study of the performance of atom and interval recovery as functions of $(a, r)$.

\paragraph{The procedure}
Both sets of experiments proceed in three steps.
First, we solve the moment relaxation \citep{Lasserre2001GlobalOptimization} of a polynomial optimization problem (POP)~(\ref{eq:experiments_2}) or
(\ref{eq:experiments_3})
at orders $N \in \{4, 5, \ldots, 9\}$, obtaining pseudomoment matrices $M_N$.
Second, we assume $M_N$ 
satisfies Assumption~\ref{as:measure_base} and run Algorithm~\ref{alg:atom_extraction_theoretic} with $\varepsilon = 10^{-2}$.
Third, we measure the accuracy of estimating $\supp \mu$. 
Given $(a, c, r, N)$, atom recovery is deemed successful if Algorithm~\ref{alg:atom_extraction_theoretic} produces exactly one candidate atom in $\mathcal{A}_N$ within $\varepsilon$ of the true atom $a+c+r$.
We measure the fidelity of the recovered continuous support via the intersection-over-union (IoU) metric. 
Let us note that for a fixed  $(a,r,N)$, we compute the success rate as an average over a collection of parameters $c$ (the problem is not translationally invariant).

\paragraph{One interval}
To test the behaviour of Algorithm~\ref{alg:atom_extraction_theoretic} under Assumption~\ref{as:single_interval}, we consider the POP: 
\begin{align}
    \min\ &1 \nonumber \\
    \text{s.t.}\quad &R^2 - x^2 \geq 0, \nonumber \\
    &-(x - (c-r))(x - (a+c+r)) \geq 0, \label{eq:experiments_2} \\
    &(x-(c+r))(x-(a+c+r)) \geq 0. \nonumber 
\end{align}
whose representing measure satisfies:
\[
    \operatorname{supp}\mu_{ac} = \bigl[c-r,\;c+r\bigr],\qquad
    \operatorname{supp}\mu_{pp} = \{a+c+r\},
\]
considering that the support is the feasible set of (\ref{eq:experiments_2}). The separation distance is clearly 
\[
    \Delta = a.
\]

\paragraph{Two intervals}
To test the behaviour of Algorithm~\ref{alg:atom_extraction_theoretic}  under Assumption~\ref{as:measure_base}, we consider the POP: 
\begin{align}
    \min\ &1 \nonumber \\
    \text{s.t.}\quad &R^2 - x^2 \geq 0, \nonumber \\
    &-(x - (c-r))(x - (a+c+r)) \geq 0, \label{eq:experiments_3} \\
    &(x-(c+r))(x-(a+c+r)) \geq 0, \nonumber \\
    &(x-(c-r/3))(x-(c+r/3)) \geq 0. \nonumber
\end{align}
whose representing measure $\mu = \mu_{ac;1} +  \mu_{ac;2} +\mu_{pp}$ satisfies Assumption~\ref{as:atoms_outside}, with
\[
    \operatorname{supp}\mu_{ac;1} = \bigl[c-r,\;c-\tfrac{r}{3}\bigr],\qquad
    \operatorname{supp}\mu_{ac;2} = \bigl[c+\tfrac{r}{3},\;c+r\bigr],\qquad
    \operatorname{supp}\mu_{pp} = \{a+c+r\}.
\]

\begin{figure}[H]
\label{FIG:MAIN}
    \centering

    \begin{subfigure}[t]{0.48\linewidth}
        \centering
        \includegraphics[width=\linewidth]{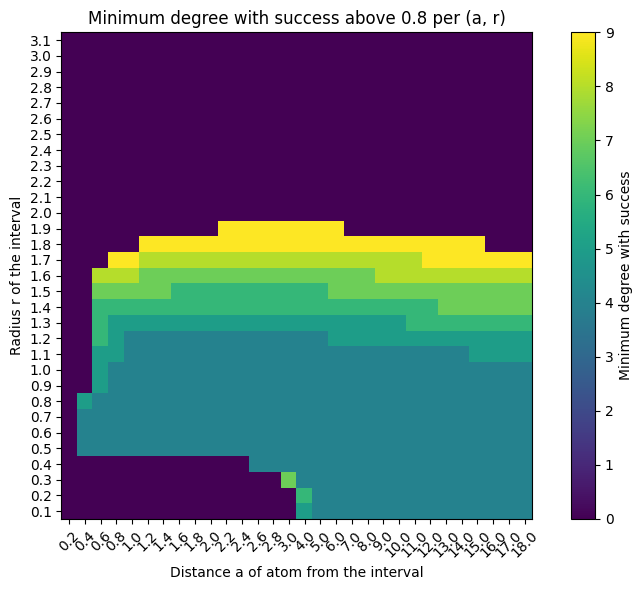}
        \caption{\textbf{One interval}: Minimum degree needed to identify the atom with success rate above 80 \%. }
        \label{fig:sub1}
    \end{subfigure}
    \hfill
    \begin{subfigure}[t]{0.48\linewidth}
        \centering
        \includegraphics[width=\linewidth]{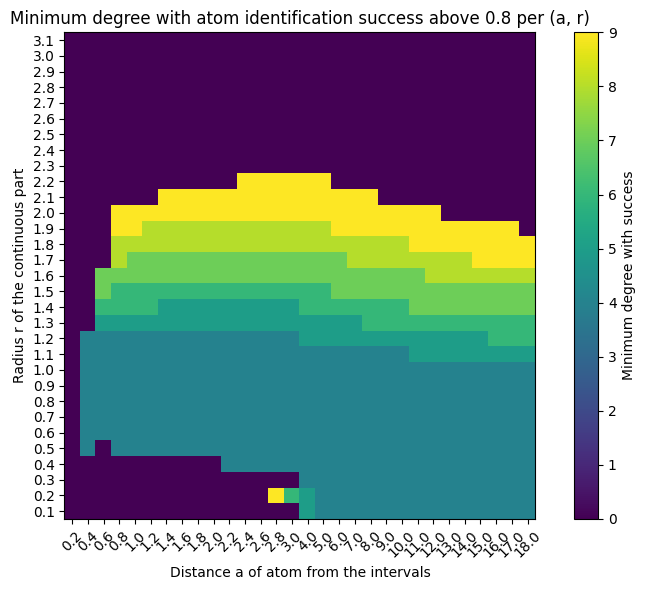}
        \caption{\textbf{Two intervals}: Minimum degree needed to identify the atom with success rate above 80 \%.}
        \label{fig:sub2}
    \end{subfigure}

    \vspace{0.5em}

    \begin{subfigure}[t]{0.48\linewidth}
        \centering
        \includegraphics[width=\linewidth]{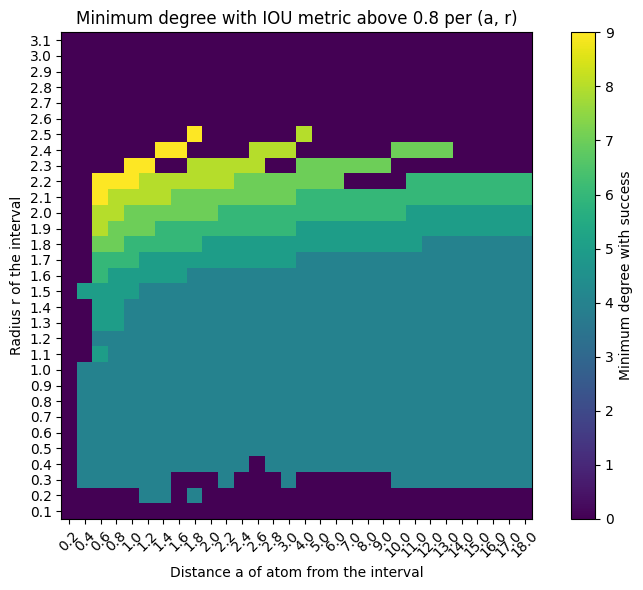}
        \caption{\textbf{One interval}: Minimum degree needed to identify the interval with IoU metric above 80 \%.}
        \label{fig:sub3}
    \end{subfigure}
    \hfill
    \begin{subfigure}[t]{0.48\linewidth}
        \centering
        \includegraphics[width=\linewidth]{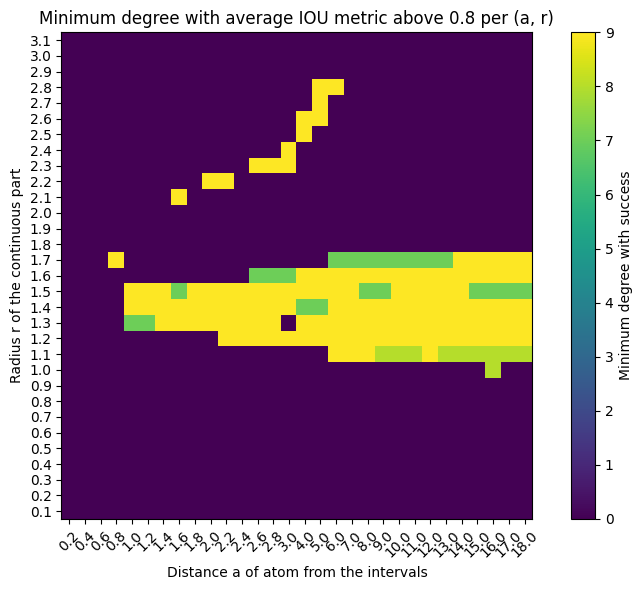}
        \caption{\textbf{Two intervals}: Minimum degree needed to identify the interval with IoU metric above 80 \%.}
        \label{fig:sub4}
    \end{subfigure}

    \vspace{0.5em}

    \begin{subfigure}[t]{0.48\linewidth}
        \centering
        \includegraphics[width=\linewidth]{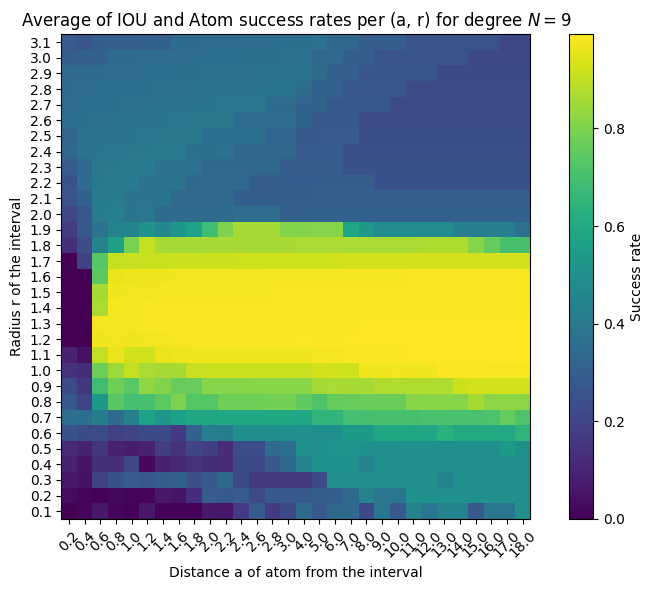}
        \caption{
        \textbf{One interval}: Average of IoU and atom identification success rate for fixed $N=9$.}
        \label{fig:sub5}
    \end{subfigure}
    \hfill
    \begin{subfigure}[t]{0.48\linewidth}
        \centering
        \includegraphics[width=\linewidth]{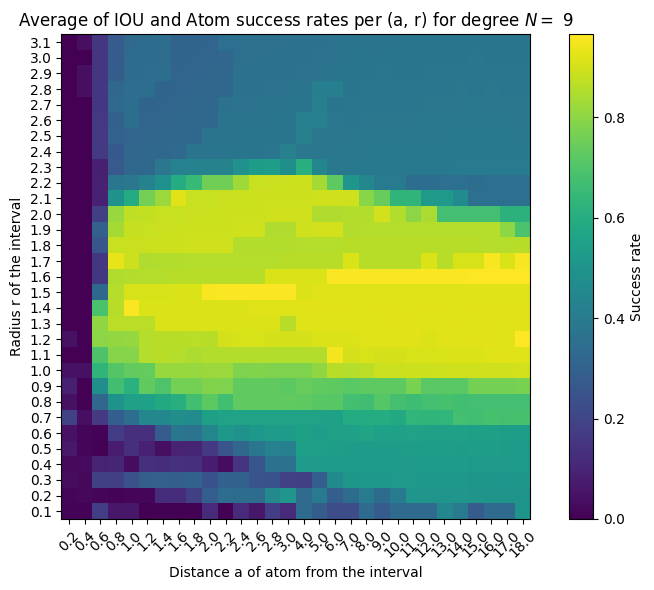}
        \caption{\textbf{Two intervals}: Average of IoU and atom identification success rate for fixed $N=9$.}
        \label{fig:sub6}
    \end{subfigure}

    \caption{Support recovery comparison of the one interval and two interval setting.}
    \label{fig:locatom-success}
\end{figure}
In the two interval settings, we again assume that the support of $\mu$ (which we aim to recover) is the feasible set of (\ref{eq:experiments_3}). The separation distance for the two interval case  is clearly 
\[
    \Delta = \min\;\left\{{\textstyle\frac{2r}{3}},\,a\right\}.
\]



Figure \ref{fig:locatom-success} presents the results in single-interval (Assumption \ref{as:single_interval}) and two-intervals (Assumption \ref{as:atoms_outside}) settings for certain choices of parameters $(a,r)$.
In particular, the figure presents the minimum degree $N$ needed in order to reach certain success rate of identifying the atoms (Figures \ref{fig:sub1}, \ref{fig:sub2})
and of identifying the interval(s) (Figures \ref{fig:sub3}, \ref{fig:sub4}).
The value of 0 (displayed in dark blue) indicates that the threshold was not reached for any degree $N=4,5,...,9$.
In Figures \ref{fig:sub5}, \ref{fig:sub6}, we fix the degree to $N=9$ and plot the average between the atomic identification success rate and IoU displaying the performance of identifying $\supp \mu = \supp \mu_{ac} + \mu_{pp}$.

Notice that solving the semidefinite programming relaxation of either (\ref{eq:experiments_2}) or (\ref{eq:experiments_3}) and the root-extraction are numerically challenging for $N > 9$.
In theory, this restricts the attainable precision to $\varepsilon \approx 1/N \approx 0.1$ even under the idealized conditions of Theorem~\ref{thm:algorithm_proof} with Assumption \ref{as:single_interval} satisfied, when we consider with the range $N \in \{4,\ldots,9\}$, rather than $N > 1/\varepsilon$. 
Furthermore, when the continuous part of the representing measure is supported on two disjoint intervals in (\ref{eq:experiments_3}), the exponential convergence guarantee of Theorem~\ref{lemma:atom_convergence} does not apply and spectral pollution may occur.
Still, as Figure~\ref{fig:locatom-success} shows, the cases where $N = 9$ does not suffice (displayed in dark blue) are rare.

Despite these limitations, our experiments reveal two consistent empirical phenomena.
The root sequence associated with each atom exhibits convergence that is numerically consistent with exponential decay for both one-interval and two-interval settings -- corroborating Theorem~\ref{lemma:atom_convergence}. For a certain non-negligible range of parameter values, relaxation orders $n \in \{5,\ldots,9\}$ suffice to isolate individual atoms and to recover one and even two continuous components with acceptable accuracy.

\paragraph{Additional experimental results}

Appendix~\ref{Sec:TSSOS_vs_OGPoly} provides additional numerical evidence for Remark~\ref{remark:TSSOS}: the root-extraction step of Algorithm~\ref{alg:atom_extraction_theoretic} agrees with the output of routine \verb|extract_solutions_robust| of TSSOS \citep{wang2021tssos} to machine-precision accuracy. 
In particular, Appendix~\ref{Sec:TSSOS_vs_OGPoly} reports two further sets of experiments. 
Figure~\ref{fig:two_polynomials} in that appendix illustrates the behavior of two consecutive orthogonal polynomials $p_8$ and $p_9$ generated by a representative pseudomoment matrix, confirming the interlacing property (Theorem~\ref{thm:interlacing}), the convergence of roots toward the atom (Theorem~\ref{lemma:atom_convergence}), and the containment of roots within the convex hull of $\operatorname{supp}\mu$ (Theorem~\ref{thm:roots_and_support}).
Two spectral-pollution roots are visible inside the gap $[c-r/3,\,c+r/3]$; their presence is consistent with the theory, which permits at most one polluting root in a measure-zero set (Theorem~\ref{thm:properties_og_poly}, item~\ref{thm:measure_0_sets}).
The second set of experiments (Table~\ref{tab:tssos_comparison} and Figure~\ref{fig:placeholder}) validates Remark~\ref{remark:TSSOS}: across 120 problem instances, the outputs of Algorithm~\ref{alg:roots_from_moments} and \verb|extract_solutions_robust| (with \textbf{rtol} set to zero) agree to within $3.41 \times 10^{-11}$ in cumulative absolute difference, confirming that the TSSOS routine implements the same root-finding procedure as our algorithm up to machine-precision errors.

\section{Conclusion}
We studied the problem of reconstructing the support of a measure 
from a finite number of its moments. Under mild assumptions on the measure, we proved 
asymptotic results for the zeros of orthogonal polynomials that allow us to recover the support and distinguish between the continuous and atomic parts of the measure.
These theoretical results lead to a finite-time extraction algorithm, which is polynomial-time under additional assumptions. We tested the
algorithm on moment/sum-of-squares relaxations of  polynomial optimization problems, where the minimizer
extraction beyond finitely-atomic representing measures had been a major open problem. The numerical results are consistent with our theoretical results and, in several cases, seem to benefit from rates of convergence faster than those proven. Improving the bounds on the convergence rates of Algorithm~\ref{alg:atom_extraction_theoretic} may therefore be an important direction for future research.   

\section*{Acknowledgements}
S.M. and J.M. were funded by the European Union under the project
ROBOPROX (reg. no. CZ.02.01.01/00/22 008/0004590).
R. K.  and A. W. were in part funded by the Czech Science Foundation (23-07947S). 

\bibliographystyle{siamplain}
\bibliography{references}

\newpage
\appendix

\section{Proofs}\label{sec:proof-section}

\subsection{Proof of Theorem \ref{lemma:atom_convergence}}
\label{appendix:roots_convergence}
We assume that $\text{supp}\mu_{ac}=[-1,1]$. Due to Assumption \ref{as:measure_base}, Szego's condition holds for the density, see \citep{goncar_convergence_1975}. In particular the Szego's condition is implied by the density function being lower bounded by a uniform constant $c_0>0$.

In this section, we state some important facts given by \citep{goncar_convergence_1975}, on which we build our results. 
Consider a measure $\nu$, compactly supported on $[-1,1]$ and absolutely continuous with respect to the Lebesgue measure. 
Define the Stieltjes transform of $\nu$, which is a function $g$, holomorphic outside of $[-1,1]\subset\mathbb{C}$, and is defined by the integral transform  \begin{equation}g(z)=\int_{[-1,1]}\frac{\mathrm{d}\nu(t)}{z-t},\forall z\notin[-1,1].
\end{equation}
The function $g$ admits a Laurent series decomposition at $z=\infty$, indeed, for all $z$ in a neighborhood of $\infty$, we compute \begin{align*}
    g(z)&=z^{-1}\sum_{n\geq0}\int_{-1}^{1}(t/z)^n\mathrm{d}\nu(t)\\
    &=z^{-1}\sum_{n\geq0}g_nz^{-n}
\end{align*}
where $g_n=\int_{-1}^{1}t^n\mathrm{d}\nu(t),\forall n\geq 0$ constitutes the moment sequence of $\nu$. Therefore, the Stieltjes transform is a variant of the generating function of the moment sequence. We also define the degree-$n$ Padé approximation of $g$, which consists of a pair of polynomials $(p_n[g](z),q_n[g](z))$ such that $q_n[g]=z^n+\dots$ has degree $n$, is monic and the following identity holds : \begin{equation}\label{eqn:pade_g}
    g(z)-\frac{p_n[g](z)}{q_n[g](z)}=\frac{C_n}{z^{2n+1}}+\dots
\end{equation}
where $C_n\in\mathbb{C}$, whose value is unimportant because it is only required that $p_n[g]/q_n[g]$ produces correctly the first $0,\dots,2n-1$ moments of $\nu$.

From (\ref{eqn:pade_g}), a simple multiplication yields \begin{equation}\label{eqn:pade_g_2}
    q_n[g](z)g(z)-p_n[g](z)=\frac{\tilde{C}_n}{z^{n+1}}+\dots
\end{equation}
By comparing the coefficient of the power $z^k$ with $k=-1,\dots,-n$ at both side of (\ref{eqn:pade_g_2}), one obtains a linear system of $n$ equations which allows to find $q_n[g]$. Define $G_n\in\mathbb{C}^{(n+1)\times(n+1)}$ where $[G_{n}]_{ij}=g_{i+j},\forall i,j=0,\dots,n$, then \begin{equation}\label{eqn:pade_g_denominator}
    q_n[g](z)=\frac{1}{\det G_{n-1}}\begin{pmatrix}
        g_0&g_1&\dots &g_n\\
        g_1&g_2&\dots &g_{n+1}\\
        \vdots&\vdots&\vdots\\
        g_{n-1}&g_{n}&\dots&g_{2n-1}\\
        1&z&\dots&z^{n}
    \end{pmatrix}
\end{equation}
by the Cramer's rule for solving linear system, if $\det G_{n-1}\neq 0$. Since $\nu$ is a positive measure whose support is infinite, $q_n[g]$ is always well defined because $\det G_n\neq 0,\forall n$. We should also realize that the denominators of the Padé approximations of $g(z)$ are precisely the monic orthogonal polynomials of $\nu$, through the determinantal characterization. 
If one manages to prove, for example, uniform convergence of $p_n[g]/q_n[g]$ towards $g$ as $n\rightarrow\infty$, we can obtain information about the distribution of the roots of $q_n[g](z)$, which at limit should approach the singularities of $g(z)$, meaning the interval $[-1,1]$.

Gonchar proved a result of this type in 1975, which is the cornerstone that we use to prove Theorem \ref{lemma:atom_convergence} that we use in the current study. Consider a measure $\mu_{ac}$ supported over $[-1,1]$ and absolutely continuous with respect to the Lebesgue measure. Let $g$ be its Stieltjes transform. We consider then $x_j\notin[-1,1],j=1,\dots,l$, integer multiplicity $m_j\geq 1$ and complex weights $A_{j,k}\in\mathbb{C},\forall j=1,\dots,l,\forall k=1,\dots,m_j$ with $A_{j,m_j}\neq 0$, we define the rational function \begin{equation}
    r(z)=\sum_{j=1}^l\sum_{k=1}^{m_j}\frac{A_{j,k}}{(k-1)!(z-x_j)^k}.
\end{equation}
and we perturb the Stieltjes transform $g$ by adding $r$ to it : \begin{equation}
    f=g+r=z^{-1}\sum_{n\geq 0}f_nz^{-n}
\end{equation}
It is easy to notice that the singularity points of $f$ are the interval $[-1,1]$ and the isolated singularities $x_1,\dots,x_l$ of multiplicity $m_1,\dots,m_l$. Gonchar proved a uniform convergence result of the Padé approximation $p_n[f]/q_n[f]$ towards $f$. Because of the perturbation $r$, it is not necessarily true that the Padé approximation exists for all $n$. Define $F_n\in\mathbb{C}^{(n+1)\times(n+1)}$, where $[F_n]_{i,j}=f_{i+j}, \ \forall i,j = 0,...,n$, and $\Lambda=\{n\in\mathbb{N},\det F_{n-1}\neq 0\}$, then from the determinantal formula (\ref{eqn:pade_g_denominator}) $q_n[f]$ exists for all $n\in\Lambda$. Gonchar proved the following theorem :

\begin{theorem}[Convergence of Padé approximation,\cite{goncar_convergence_1975}]
\label{thm:Gonchar_stuff}
    Consider an absolutely continuous measure $\mu_{ac}$
    with $\supp\mu_{ac}=[-1,1]$ and assume that $\mu_{ac}$ satisfies the Szego's condition. Construct $f=g+r$ and assume that $\Lambda$ is infinite. Let $p_n/q_n$ be the Padé approximation of $f$ for all $n\in\Lambda$, then: \begin{itemize}
        \item For all open set $U\subset \mathbb{C}\setminus[-1,1]$, there exists $n(U)$ so that for all $n\geq n(U),n\in\Lambda$, the number of roots of $q_n$ in $U$ is equal to the number of poles of $r$, counting multiplicity.
        \item For all compact sets $K\subset\mathbb{C}\setminus[-1,1]$ such that $x_j\notin K,\forall j=1,\dots,l$, there exists a constant $\rho(K)>1$ such that $$
            \limsup_{n\rightarrow\infty,n\in\Lambda}\norm{f-\frac{p_n}{q_n}}_{K}^{1/n}\leq 1/\rho(K)<1.
        $$
    \end{itemize}
\end{theorem}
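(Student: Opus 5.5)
The plan is to reduce everything to strong (Szeg\H{o}) asymptotics for the orthonormal polynomials $\{\varphi_k\}$ of $\mu_{ac}$ on $[-1,1]$, which are available because $\mu_{ac}$ satisfies Szeg\H{o}'s condition. Let $w(z)=\prod_{j=1}^l (z-x_j)^{m_j}$ and $M=\sum_j m_j=\deg w$. Since $wr$ is a polynomial of degree $<M$, multiplying the Padé relation $q_n f-p_n=O(z^{-n-1})$ by $w$ gives
\begin{align*}
(wq_n)\,g-\bigl(wp_n+q_n\,wr\bigr)=O(z^{-n-1+M}).
\end{align*}
Comparing coefficients and using $g(z)=\int \frac{d\mu_{ac}(t)}{z-t}$, I expect this to yield the orthogonality
\begin{align*}
\int (wq_n)(t)\,t^k\,d\mu_{ac}(t)=0,\qquad k=0,\dots,n-M-1.
\end{align*}
Hence, for $n\in\Lambda$, we get $wq_n=\sum_{k=n-M}^{n+M} c_{k,n}\varphi_k$ with at most $2M+1$ unknown coefficients.

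The coefficients are pinned down by the remaining Padé conditions, namely that $q_nf-p_n$ has no poles at the $x_j$ and vanishes to the correct order at infinity. I would write these as a linear system for $(c_{k,n})$ involving the values $\varphi_k^{(s)}(x_j)$ and the second-kind functions $\psi_k(z)=\int\varphi_k(t)\,\frac{d\mu_{ac}(t)}{z-t}$ at the $x_j$. Szeg\H{o} asymptotics give $\varphi_k(z)\sim \Phi(z)^k/D(z)$ and $\psi_k(z)\sim \Phi(z)^{-k-1}D(z)$ locally uniformly off $[-1,1]$, where $\Phi(z)=z+\sqrt{z^2-1}$ and $|\Phi|>1$ there. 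After rescaling by powers of $\Phi(x_j)$, the system should become asymptotically a confluent Vandermonde-type matrix in the distinct values $\Phi(x_j)$, with $A_{j,m_j}\neq0$ entering the leading terms.

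The main obstacle is showing that this system is nondegenerate along $n\in\Lambda$. It may degenerate for some $n$, since the $A_{j,k}$ are complex and $\Lambda$ may be a proper subset of $\mathbb{N}$. I would first try to show that the normalized leading coefficient vector stays bounded away from $0$ precisely because $\det F_{n-1}\neq0$.

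Given the asymptotic form of $c_{k,n}$, one obtains $q_n(z)=(wq_n)(z)/w(z)\approx$ (const)$\,\Phi(z)^n\cdot R(z)$ locally uniformly on compacts avoiding $[-1,1]$, where $R$ is a rational factor with zeros exactly at the $x_j$ with multiplicities $m_j$. Hurwitz's theorem applied on small discs around each $x_j$, and nonvanishing elsewhere in $U$, gives the first bullet. For the second bullet I would use the Hermite-type error formula $f-\frac{p_n}{q_n}=\frac{1}{q_n(z)}\int\frac{q_n(t)\,d\mu_{ac}(t)}{z-t}$ plus the residue terms, which combine into $\frac{1}{w q_n}\int \frac{(wq_n)(t)\,d\mu_{ac}(t)}{z-t}$ up to polynomial corrections killed by orthogonality. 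By the $\psi_k$ asymptotics this is of order $|\Phi(z)|^{-2n}$ times bounded factors on $K$, so the $n$-th root limsup is at most $\max_K|\Phi|^{-2}<1$, giving $\rho(K)>1$.
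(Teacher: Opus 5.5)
The paper gives no proof of this statement; it quotes it from Gonchar. Your outline follows the standard route, and the reduction is right: $wq_n\perp\mathcal{P}_{n-M-1}$ in $L^2(\mu_{ac})$, so $wq_n=\sum_{k=n-M}^{n+M}c_{k,n}\varphi_k$. (The polynomial in your display should be $wp_n-q_n\,wr$, a harmless sign slip.)

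The gap is the step you yourself call the main obstacle. You leave it open, and the route you suggest cannot close it: $\det F_{n-1}\neq0$ is a non-quantitative condition at each single $n$ and gives no bound uniform in $n$. Part of the trouble is that your side conditions are misstated. In general $q_nf-p_n=\frac{1}{w}\int\frac{(wq_n)(t)\,d\mu_{ac}(t)}{z-t}$ \emph{does} have poles at the $x_j$. The correct condition at $x_j$ is
\begin{align*}
\int\frac{(wq_n)(t)\,d\mu_{ac}(t)}{z-t}-(wq_n)(z)\,r(z)=O\bigl((z-x_j)^{m_j}\bigr),\qquad z\to x_j .
\end{align*}
Since the integral is holomorphic at $x_j$ and $A_{j,m_j}\neq0$, this forces $\sum_k c_{k,n}\varphi_k$ to vanish to order $m_j$ at $x_j$. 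It then matches the principal part of $q_nf-p_n$ with that of $q_nr$. That is $2m_j$ scalar equations at $x_j$, so $2M$ equations for $2M+1$ unknowns in total.

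What closes the argument is that the limit of this system is explicit and has nothing to do with $\Lambda$.

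\emph{The limit system.} Write $r=h_j(z)(z-x_j)^{-m_j}$ near $x_j$, with $h_j(x_j)=A_{j,m_j}/(m_j-1)!\neq0$. The relation above says the Taylor coefficients of $wq_n$ at $x_j$ of orders $m_j,\dots,2m_j-1$ coincide with those of $(z-x_j)^{m_j}\Psi_n/h_j$. Here $\Psi_n=\sum_k c_{k,n}\psi_k=O(|\Phi|^{-n}\|c_n\|)$ up to subexponential factors. So for $G_n:=wq_n\Phi^{-n}/\|c_n\|$, the Taylor coefficients at $x_j$ of order $<m_j$ vanish, and those of orders $m_j,\dots,2m_j-1$ tend to $0$ exponentially fast. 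Along subsequences $G_n$ converges, with derivatives, to $\sum_{s=-M}^{M}\hat c_s\Phi(z)^s/D(z)$. Hence every limit vector $\hat c$ makes $\sum_{s=-M}^{M}\hat c_s\zeta^{s+M}$ vanish to order $2m_j$ at each $\zeta=\Phi(x_j)$. The $A_{j,k}$ drop out of the limit entirely.

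\emph{Nondegeneracy.} Since $\Phi$ is injective off $[-1,1]$, these $2M$ conditions on a polynomial of degree at most $2M$ force $\hat c$ to be a multiple of the coefficient vector of $\prod_j(\zeta-\Phi(x_j))^{2m_j}$, whose top coefficient is $1$. Consequences:
\begin{itemize}
\item For all large $n$ the kernel is one-dimensional with $c_{n+M,n}\neq0$. So every large $n$ lies in $\Lambda$, and the hypothesis on $\Lambda$ is superfluous.
\item $q_n\approx\mathrm{const}\cdot\Phi^{n-M}\prod_j(\Phi-\Phi(x_j))^{2m_j}/(wD)$. This is your $R$-form, with zeros of multiplicity exactly $m_j$ at $x_j$.
\item It supplies the lower bound $|wq_n|\gtrsim\|c_n\|\,|\Phi|^{n}$ on $K$, which your second-bullet estimate tacitly needs. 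Combined with the exact identity $f-p_n/q_n=\frac{1}{wq_n}\int\frac{(wq_n)(t)\,d\mu_{ac}(t)}{z-t}$, it gives $\rho(K)=\min_K|\Phi|^2$.
\end{itemize}

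Finally, Hurwitz plus nonvanishing proves the first bullet only for $U$ with $\overline U$ compact in $\mathbb{C}\setminus[-1,1]$ and no $x_j$ on $\partial U$, with the count equal to the number of poles of $r$ \emph{in} $U$. That is the correct form. As literally written the bullet is false: take a small disc containing no $x_j$. Szeg\H{o} asymptotics are only locally uniform off $[-1,1]$, so your argument gives no control of zeros approaching $[-1,1]$ from outside. In particular it does not justify the choice $U=\mathbb{C}\setminus[-1,1]$ made later in the paper.
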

Now we present the proof of Theorem \ref{lemma:atom_convergence}. The proof is not new, the same techniques are used also in \cite{goncar_convergence_1975}, notably those used in the proof of the last point.

\begin{proof}[Proof of Theorem \ref{lemma:atom_convergence}]
We take the special case of real $x_j\notin[-1,1]$, $m_j=1,\forall j=1,\dots,l$, $A_{j,1}=\alpha_j$ and realize that in this situation \begin{equation}
    f=g+r=\int_{\mathbb{R}}\frac{\mathrm{d}\mu(t)}{z-t}
\end{equation}
where $\mu=\mu_{ac}+\mu_{pp},\mu_{pp}=\sum_{j=1}^l\alpha_j\delta_{x_j}$. Therefore $f$ is the generating function of the moment sequence of $\mu$, from the determinantal characterization of the monic orthogonal polynomials, $q_n$'s are the monic orthogonal polynomials of $\mu$. Since the support of $\mu_{ac}$ is infinite and $\mu$ is a positive measure, we conclude that $\det F_n>0,\forall n\in\mathbb{N}$ therefore $\Lambda=\mathbb{N}$ is infinite.

To prove the first point of Theorem \ref{lemma:atom_convergence}, we consider the open set $U=\mathbb{C}\setminus[-1,1]$. For all $n$ large enough, $q_n$ has exactly $r$ roots in $U$ due to Theorem \ref{thm:Gonchar_stuff}.

To prove the second point, for each atom $x_j$, we consider a sequence of open balls $B(x_j,1/n),\forall n\in \mathbb{N}$. For all $\epsilon >0$, there exists $N_1$ large enough so that $1/N_1<\epsilon$. Now take $N_2=\max\{n(B(x_j,1/N_1)),N_1\}$, from Theorem \ref{thm:Gonchar_stuff}, for all $n>N_2$, $q_n$ has precisely one root $x_{j_n,n}$ in $B(x_j,1/N_1)$ and therefore $\vert x_{j_n,n} - x_j \rvert<1/N_1<\epsilon,\forall n>N_2$. $\epsilon$ being arbitrary, this proves the second point of Theorem \ref{lemma:atom_convergence}.

Finally we prove the exponential convergence. For all $x_j$, one can find a small $\delta>0$ so that the closed ball $\overline{B}(x_j,\delta)$ does not intersect $[-1,1]\cup\{x_i,i\neq j\}$. For all $n$ large enough, $q_n$ has precisely one root in the open ball $B(x_j,\delta)$. Since $\partial B(x_j,\delta)$ is compact, second point of
Theorem \ref{thm:Gonchar_stuff} states the following.
For all $\epsilon>0$ there exists some $N$ large enough so that for all $n>N$, $\norm{f(z)-\frac{p_n(z)}{q_n(z)}}_{\partial B(x_j,\delta)}^{1/n}\leq \frac{1+\epsilon}{\rho(\partial B(x_j,\delta))}$. We obtain the following chain of inequalities (here $x_{j_n,n}$ is the unique root of $q_n$ that is inside $B(x_j,\delta)$)
\begin{align}\label{proof-thrm:estimate1}
    &\norm{f(z)(z-x_j)(z-x_{j_n,n})-\frac{p_n(z)}{q_n(z)}(z-x_j)(z-x_{j_n,n})}_{L^\infty(\partial B(x_j,\delta))}^{1/n}\\
    \leq&\norm{f(z)-\frac{p_n(z)}{q_n(z)}}_{\partial B(x_j,\delta)}^{1/n}2^{1/n}\delta^{2/n}\\
    \leq&\frac{1+\epsilon}{\rho(\partial B(x_j,\delta))}2^{1/n}\delta^{2/n}.
\end{align}
We realize that the function under the norm symbol of (\ref{proof-thrm:estimate1}) is holomorphic in $B(x_j,\delta)$ eventually for large enough $n$, as a result, the supremum of $\norm{.}_{\partial B(x_j,\delta)}$ is equal to $\norm{.}_{\overline{B}(x_j,\delta)}$. Now evaluate the function under the norm at $z=x_j$ and note $\alpha_j=\lim_{z\rightarrow x_j}f(z)(z-x_j)\neq 0$ (this is because $x_j$ is a simple pole of $f$), we obtain 
\begin{equation}
    |x_j-x_{j_n,j}|^{1/n}\leq\left(\frac{2\delta^2}{\alpha_j}\right)^{1/n}\frac{1+\epsilon}{\rho(\partial B(x_j,\delta))}
\end{equation}
from which we deduce that for 
$\delta$ sufficiently small, $$
    \limsup_{n\rightarrow\infty}|x_j-x_{n,j}|^{1/n}\leq \frac{1}{\rho(\partial B(x_j,\delta))}<1,
$$ which concludes the exponential convergence.
\end{proof}

\subsection{Proof of Theorems \ref{thm:distinguish}, \ref{thm:distinguish_outside} and \ref{thm:distinguish_general}}
\label{appendix:interval_atom_behavior}

We aim to leverage the fact that polynomials orthogonal w.r.t. a measure from a certain class of   "regular measures" have convenient properties in terms of behavior of their zeros on $\supp \mu_{ac}$.
It is trivial to show that the measures satisfying Assumption \ref{as:measure_base}, that we consider, are regular.

We start by introducing several definitions from measure potential theory \cite{Simon2007_equilibrium}, \cite{Widom1967}, \cite{SaffTotik1997}.

\begin{definition}
Let $E \subset \mathbb{R}$ be a nonempty compact set, and let $\mathcal{M}(E)$ denote
the set of Borel probability measures supported on $E$.
For $\mu \in \mathcal{M}(E)$, the \emph{logarithmic energy}  of $\mu$ is defined by
\[
I(\mu) = \iint \log \frac{1}{|z - w|} \, d\mu(z)\, d\mu(w).
\]

The \emph{equilibrium measure} $\rho_E$ of $E$ is the unique measure in $\mathcal{M}(E)$
that minimizes the energy:
\[
I(\rho_E) = \inf_{\mu \in \mathcal{M}(E)} I(\mu)
\]
and this quantity is called the Robin constant for $E$.

The \emph{logarithmic capacity} (or simply \emph{capacity}) of $E$ is defined by
\[
\operatorname{cap}(E) = \exp\bigl(- I(\rho_E)\bigr),
\]
where $\mu_E$ is the equilibrium measure of $E$.
\end{definition}

\textbf{Example:} For $E = [-1,1]$, the equilibrium measure is given by
\begin{align*}
    d\rho_E(x) = \frac{1}{\pi} \frac{dx}{\sqrt{1 - x^2}}, \qquad x \in (-1,1).
\end{align*}

\begin{definition}
Let $\mu$ be a finite positive Borel measure on $\mathbb{R}$ with compact support,
and let $\{\tilde{P}_n\}_{n=0}^\infty$ be the sequence of orthonormal polynomials with respect to $\mu$,
with leading coefficients $\kappa_n > 0$, i.e.
\[
\tilde{P}_n(x) = \kappa_n x^n + \cdots.
\]
The measure $\mu$ is called \emph{regular} (in the sense of Stahl--Totik) if
\begin{align}\label{eq:leading_coefficient}
\lim_{n \to \infty} \kappa_n^{1/n}
= \frac{1}{\operatorname{cap}(\operatorname{supp}\mu)}.
\end{align}
\end{definition}

We follow with a sufficient condition \cite[Theorem 1.12]{Simon2007_equilibrium}, \cite{Widom1967}
on the measure $\mu$  for $\mu$ to be regular.
\begin{theorem}
\label{thm:regular_measure_sufficient}
    Let $\mu$ be a measure on $\R$ with compact support and $$E = \sigma_{ess}(d\mu) = \supp \mu_{ac}, \quad \text{cap}(E)>0.$$ 
    Suppose $d \rho_E$ is the equilibrium measure for $E$ and
    \begin{align*}
        d\mu = w(x) d\rho_e(x) + d\mu_s,
    \end{align*}
    where $d\mu_s$ is $d\rho_E$ singular.
    Suppose $w(x)>0$ for  a.e. x with respect to $\rho_E$.
    Then $\mu$ is regular.
\end{theorem}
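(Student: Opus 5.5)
The plan is to prove the two inequalities
\[
\liminf_{n\to\infty}\kappa_n^{1/n}\ \ge\ \frac{1}{\operatorname{cap}(\supp\mu)}
\qquad\text{and}\qquad
\limsup_{n\to\infty}\kappa_n^{1/n}\ \le\ \frac{1}{\operatorname{cap}(E)} ,
\]
and then to observe that $\operatorname{cap}(\supp\mu)=\operatorname{cap}(E)$.

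\textbf{Equality of the capacities.} Since $E=\sigma_{ess}(d\mu)$, the set $\supp\mu\setminus E$ consists of isolated points. It is therefore at most countable. Countable compact-union additions do not change logarithmic capacity, because countable sets have capacity zero. This gives $\operatorname{cap}(\supp\mu)=\operatorname{cap}(E)$, so the two bounds combine to yield (\ref{eq:leading_coefficient}).

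\textbf{Lower bound.} This part is soft and holds for every compactly supported measure with infinite support. Write $P_n=\kappa_n^{-1}\tilde P_n$ for the monic orthogonal polynomial. Then $\kappa_n^{-1}=\|P_n\|_{L^2(\mu)}$, and $P_n$ minimizes the $L^2(\mu)$ norm among monic polynomials of degree $n$. Compare $P_n$ with the monic Chebyshev polynomial $T_n$ of $\supp\mu$. This gives
\[
\kappa_n^{-1}\le \mu(\R)^{1/2}\,\|T_n\|_{\supp\mu}.
\]
By the classical Fekete--Szeg\H{o} relation, $\|T_n\|^{1/n}\to\operatorname{cap}(\supp\mu)$. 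Hence $\limsup\kappa_n^{-1/n}\le\operatorname{cap}(\supp\mu)$, which is the lower bound.

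\textbf{Upper bound.} This is the real content and uses $w>0$ $\rho_E$-a.e. Fix $\delta>0$ and let $S_\delta=\{w>\delta\}$. By hypothesis $\rho_E(S_\delta)\to1$ as $\delta\to0$. We have
\[
\kappa_n^{-2}=\int|P_n|^2d\mu\ \ge\ \delta\int_{S_\delta}|P_n|^2\,d\rho_E .
\]
Apply Jensen's inequality to the normalized measure $\rho_E|_{S_\delta}/\rho_E(S_\delta)$:
\[
\log\int_{S_\delta}|P_n|^2d\rho_E\ \ge\ \log\rho_E(S_\delta)+\frac{2}{\rho_E(S_\delta)}\int_{S_\delta}\log|P_n|\,d\rho_E .
\]

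Now bound $\int_{S_\delta}\log|P_n|\,d\rho_E$ in two steps.
\begin{enumerate}
\item \emph{Full integral.} Writing $P_n(x)=\prod_k(x-z_k)$, we get $\int\log|P_n|\,d\rho_E=-\sum_k U^{\rho_E}(z_k)$. Frostman's theorem gives $U^{\rho_E}\le\log(1/\operatorname{cap}E)$ everywhere on $\mathbb C$. Hence $\int\log|P_n|\,d\rho_E\ge n\log\operatorname{cap}(E)$.
\item \emph{Complement.} All zeros of $P_n$ lie in $\operatorname{conv}\supp\mu$ by Theorem \ref{thm:properties_og_poly}. So on $E$ we have $\log|P_n|\le n\log D$, where $D$ is the diameter of $\operatorname{conv}\supp\mu$ (we may assume $D\ge1$). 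Therefore $\int_{E\setminus S_\delta}\log|P_n|\,d\rho_E\le n\,\rho_E(E\setminus S_\delta)\log D$.
\end{enumerate}
Subtracting the second estimate from the first yields
\[
\int_{S_\delta}\log|P_n|\,d\rho_E\ \ge\ n\bigl(\log\operatorname{cap}E-\eta_\delta\log D\bigr),
\qquad \eta_\delta=\rho_E(E\setminus S_\delta).
\]

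Combine these estimates, take $n$-th roots and let $n\to\infty$. The factors $\delta^{1/n}$ and $\rho_E(S_\delta)^{1/n}$ disappear in the limit, leaving
\[
\liminf\kappa_n^{-1/n}\ \ge\ \bigl(\operatorname{cap}(E)\,D^{-\eta_\delta}\bigr)^{1/\rho_E(S_\delta)} .
\]
Letting $\delta\to0$ sends $\eta_\delta\to0$ and $\rho_E(S_\delta)\to1$, which gives the upper bound.

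\textbf{Main obstacle.} The delicate point is exactly this truncation. Without a Szeg\H{o}-type condition $\int\log w\,d\rho_E>-\infty$, Jensen cannot be applied on all of $E$. The argument therefore relies on Frostman's \emph{global} bound $U^{\rho_E}\le\log(1/\operatorname{cap}E)$, which holds without regularity of $E$. It also relies on the crude sup bound on $\log|P_n|$, which must cost only $O(n\,\eta_\delta)$ in the exponent. This in turn needs the zeros of $P_n$ to stay in a fixed compact set, which is ensured by item \ref{thm:roots_and_support} of Theorem \ref{thm:properties_og_poly}.
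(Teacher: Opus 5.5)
Your proof is correct. Note that the paper gives no proof of this statement: it is quoted from \cite[Theorem 1.12]{Simon2007_equilibrium} and \cite{Widom1967}, so your argument stands in for a citation rather than competing with an in-paper proof.

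Your two-sided argument has two halves. For $\liminf\kappa_n^{1/n}\ge 1/\operatorname{cap}(\supp\mu)$ you use Fekete--Szeg\H{o} together with the $L^2(\mu)$-extremality of $P_n$. For the reverse inequality you combine three ingredients:
\begin{itemize}
\item Jensen's inequality on $\{w>\delta\}$;
\item Frostman's global bound $U^{\rho_E}\le\log(1/\operatorname{cap}E)$, evaluated at the zeros of $P_n$;
\item the crude bound $\log|P_n|\le n\log D$ on the $\rho_E$-small complement.
\end{itemize}
This is a standard truncated-Jensen proof of this Widom-type criterion, and it gives a self-contained replacement for the citation.

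Compared with citing, it makes clear which inputs are actually needed. You need no regularity of $E$ for the Dirichlet problem and no Szeg\H{o} condition $\int\log w\,d\rho_E>-\infty$. You also never use the hypothesis $E=\supp\mu_{ac}$; only $E=\sigma_{ess}(\mu)$, $\operatorname{cap}E>0$ and $w>0$ $\rho_E$-a.e.\ enter.

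Your step $\operatorname{cap}(\supp\mu)=\operatorname{cap}(E)$ is genuinely needed, because the normalization in (\ref{eq:leading_coefficient}) uses $\supp\mu$ rather than $E$. Give it one more line, though. Logarithmic capacity is not subadditive, so ``countable sets have capacity zero'' does not by itself imply the equality. The missing line: the equilibrium measure of $\supp\mu$ has finite energy, hence no atoms, hence is carried by the closed set $E$. That gives $\operatorname{cap}(\supp\mu)\le\operatorname{cap}(E)$, and monotonicity gives the reverse inequality.
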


One of the main ingredients for our algorithm is the following result \cite[Theorem 2.1]{Totik2009} that guarantees asymptotically linear spacing of zeros of orthogonal polynomials for regular measures.
\begin{theorem} \label{thm:linear_spacing}
Let $\mu$ be a regular measure with compact support $E \subset \mathbb{R}$.
Let $S \subset \operatorname{Int}(E)$ be a compact subset of the interior of $E$,
and assume that $\mu$ is absolutely continuous in a neighborhood of $S$ with
density $w(x)$ that is continuous and positive on $S$.
Let $\rho_E$ denote the equilibrium density.

Let 
$\{x_{k,n}\}_{k=1}^n$ denote the zeros of the $n$-th orthonormal polynomial
associated with $\mu$, ordered increasingly.

Then,
\begin{align*}
    \lim_{n \to \infty} n\bigl(x_{k+1,n} - x_{k,n}\bigr)\,d\rho_E(x) = 1,
\end{align*}
 and $\exists c>0, \forall x \in S, \exists k \in\{1,2,...,n\}:$
\begin{align} \label{eq:spacing}
    |x_{k,n} - x| \le \frac{c}{n}.
\end{align}
\end{theorem}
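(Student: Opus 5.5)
This theorem is quoted from \cite[Theorem 2.1]{Totik2009}, so here I only outline how I would reconstruct its proof. The standard route goes through Christoffel functions and Lubinsky-type universality. Let $K_n(x,y)=\sum_{j=0}^{n-1}\tilde P_j(x)\tilde P_j(y)$ be the reproducing kernel and $\lambda_n(x)=1/K_n(x,x)$ the Christoffel function. The zeros of $\tilde P_n$ are the Gauss quadrature nodes, with weights $\lambda_n(x_{k,n})$. By the Christoffel--Darboux formula, for a fixed zero $x_{k,n}$ the function $y\mapsto K_{n+1}(x_{k,n},y)$ vanishes exactly at the other zeros of $\tilde P_n$. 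So it suffices to understand this kernel on the scale $1/n$ near points of $S$.

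\textbf{Step 1 (Christoffel function asymptotics).} I would first prove, uniformly for $x$ in a compact $S'$ with $S\subset \operatorname{Int} S'\subset S'\subset\operatorname{Int}(E)$ on which $w$ is still continuous and positive,
\begin{align*}
 n\,\lambda_n(x)\to \frac{w(x)}{\omega_E(x)}, \qquad d\rho_E=\omega_E\,dx .
\end{align*}
Here regularity of $\mu$ gives the $n$-th root control $\|\tilde P_n\|_{E}^{1/n}\to 1$. For $E=[-1,1]$ the asymptotics are the classical Máté--Nevai--Totik result. For general $E$ I would follow Totik's method: approximate $E$ from inside and outside by polynomial inverse images $T_N^{-1}([-1,1])$, on which equilibrium densities are explicit. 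Then I would compare Christoffel functions by monotonicity in the measure, and localize using fast-decreasing polynomials, whose existence is again guaranteed by regularity.

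\textbf{Step 2 (universality, the main obstacle).} Next I would show that, uniformly for $x\in S'$ and $a,b$ in compact sets,
\begin{align*}
 \frac{K_n\bigl(x+\tfrac{a}{\tilde n},x+\tfrac{b}{\tilde n}\bigr)}{K_n(x,x)}\to \frac{\sin\pi(a-b)}{\pi(a-b)},\qquad \tilde n = n\,\omega_E(x).
\end{align*}
I expect this step to be the hardest. My first attempt would use Lubinsky's comparison inequality, which bounds $|K_n^\mu-K_n^{\nu}|^2$ by diagonal differences. I would take $\nu$ to be a reference measure that agrees with $\mu$ locally near $x$ and for which universality is already known, for example a Jacobi-type weight on a polynomial inverse image. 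The diagonal differences are then controlled by Step 1. Continuity and positivity of $w$ near $S$ is exactly what makes this local comparison work.

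\textbf{Step 3 (from universality to zero spacing).} Fix a zero $x_{k,n}\in S$. By Hurwitz's theorem applied to $b\mapsto K_{n+1}(x_{k,n},x_{k,n}+b/\tilde n)$, the nearby zeros of $\tilde P_n$ sit at $x_{k,n}+(j+o(1))/\tilde n$ for $j\in\mathbb Z\setminus\{0\}$. Taking $j=1$ gives $n(x_{k+1,n}-x_{k,n})\,\omega_E(x_{k,n})\to 1$ uniformly. The covering estimate \eqref{eq:spacing} then follows. Uniform spacing on $S'$ gives consecutive gaps at most $2/(n\min_{S'}\omega_E)$. Zeros exist near every point of $S$ by Theorem~\ref{thm:properties_og_poly}(\ref{thm:density_of_zeros}) together with the spacing estimate on the enlarged set $S'$, which prevents a gap from straddling $\partial S$. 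Hence one can take $c=2/\min_{S'}\omega_E$ for $n$ large, and enlarge $c$ to cover the finitely many small $n$.
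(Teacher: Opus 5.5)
The paper does not prove this statement. It cites Totik's Theorem~2.1 and appends the covering estimate (\ref{eq:spacing}) with no argument. Your outline is the route of that source: Christoffel-function asymptotics via polynomial inverse images, then bulk universality via Lubinsky's comparison method, then locating the zeros of $y\mapsto K_{n+1}(x_{k,n},y)$ as in Levin--Lubinsky. So for the fine-spacing part you agree with the paper, and your Step~3 supplies a derivation of (\ref{eq:spacing}) that the paper omits.

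That derivation, however, uses more than the hypothesis gives. You need a compact $S'$ with $S\subset\operatorname{Int}S'$ on which $w$ is still continuous and positive. The statement only gives absolute continuity near $S$ and continuity and positivity of $w$ on $S$, to be read (as in Lubinsky and Totik) as continuity at each point of $S$. Such an $S'$ need not exist. For $S=[0,1]$, the density can be continuous at $1$ yet have jumps of size $O(x-1)$ at points accumulating at $1$ from the right. Steps~1--2 then hold uniformly on $S$ but cannot be asserted on any enlargement, so your bound ``gaps at most $2/(n\min_{S'}\omega_E)$ on $S'$'' is unavailable.

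The repair is easy. If $S=[s_1,s_2]$ is an interval, no enlargement is needed. Fix $x\in S$; there are two cases.
\begin{itemize}
\item Some zero lies in $[s_1,x]$. The largest such zero is in $S$, and its successor lies beyond $x$.
\item The smallest zero in $S$ lies to the right of $x$. Its predecessor, which your Step~3 produces at distance $(1+o(1))/(n\omega_E)$, lies to the left of $s_1\le x$.
\end{itemize}
Either way, $x$ is within $2/(n\min_S\omega_E)$ of a zero for large $n$.

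For a general compact $S$ (isolated points, Cantor-like pieces), zeros need not land in $S$ at all. There you should use universality with base point $x\in S$ itself rather than at a zero. If $\tilde P_n(x)\neq0$, the zeros of $K_{n+1}(x,\cdot)$ together with $x$ are exactly the zeros of $\tilde P_{n+1}(x)\tilde P_n(y)-\tilde P_n(x)\tilde P_{n+1}(y)$. These interlace with the zeros of $\tilde P_n$, because $\tilde P_{n+1}/\tilde P_n$ is strictly increasing between consecutive zeros of $\tilde P_n$ by the confluent Christoffel--Darboux formula. Hence $\tilde P_n$ has a zero in $\bigl(x,\,x+(1+o(1))/(n\omega_E(x))\bigr)$.

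A minor point: Hurwitz's theorem needs universality locally uniformly in complex $b$, which is the Levin--Lubinsky extension. If you only have it on the real line, that is still enough. Positivity of $\sin(\pi b)/(\pi b)$ on $(0,1)$ and its sign change at $b=1$ already pin down the nearest zero on each side, which is all Step~3 uses.
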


\begin{corollary}
    The measures $\mu$ satisfying Assumption \ref{as:measure_base} we consider are regular due to Theorem \ref{thm:regular_measure_sufficient}.
    Therefore, the spacing of zeros is asymptotically linear inside $\supp \mu_{ac}$ in the sense of Theorem \ref{thm:linear_spacing}.
\end{corollary}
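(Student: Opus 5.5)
The corollary has two claims: regularity of $\mu$, and linear spacing of zeros inside $\supp\mu_{ac}$. Both follow from Theorems \ref{thm:regular_measure_sufficient} and \ref{thm:linear_spacing} once we check their hypotheses for a measure satisfying Assumption \ref{as:measure_base}.

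\emph{Step 1: the set $E$.} Write $E=\supp\mu_{ac}=\bigcup_{j=1}^m[a_j,b_j]$. Because $\mu$ has compact support and only finitely many atoms, and these atoms are separated from $E$, the essential support $\sigma_{ess}(d\mu)$ is exactly $E$. The reason is that isolated atoms are discrete spectrum of $M_x$ and do not belong to the essential spectrum, while $E$ contains no isolated points. Next, $E$ contains a nondegenerate interval $[a_1,b_1]$, so by monotonicity of capacity $\operatorname{cap}(E)\ge \operatorname{cap}([a_1,b_1])=(b_1-a_1)/4>0$.

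\emph{Step 2: the decomposition with respect to $\rho_E$.} For a finite union of intervals, the equilibrium measure $\rho_E$ is absolutely continuous with respect to Lebesgue measure on $E$. Its density is positive in the interior of each interval and blows up like an inverse square root at the endpoints; this is classical, and the example $E=[-1,1]$ above is the model case. Hence Lebesgue measure restricted to $E$ and $\rho_E$ are mutually absolutely continuous. We then write
\[
d\mu_{ac}=w\,d\rho_E,\qquad w=\frac{d\mu_{ac}}{dx}\Big/\frac{d\rho_E}{dx}.
\]
The assumed bound $d\mu_{ac}/dx\ge c_0>0$ on $E$, together with finiteness of the equilibrium density on the interior, gives $w>0$ $\rho_E$-a.e. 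Set $\mu_s=\mu_{pp}$, which is finitely atomic and therefore singular with respect to $\rho_E$. Theorem \ref{thm:regular_measure_sufficient} now applies and $\mu$ is regular.

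\emph{Step 3: linear spacing.} Let $S$ be any compact subset of $\operatorname{Int}(E)$, for instance $\bigcup_j[a_j+\eta,b_j-\eta]$. Near $S$ the measure $\mu$ is purely absolutely continuous, since the atoms lie at positive distance from $E$. Theorem \ref{thm:linear_spacing} additionally requires the density to be continuous and positive on $S$. Assumption \ref{as:measure_base} gives positivity but not continuity, so this is the main obstacle. I would handle it in one of two ways. The first is to read continuity of the density on the interior as implicit in the model; the paper already writes $\alpha\in C(-1,1)$ in the single-interval case. The second is to fall back on the weaker conclusion (\ref{eq:spacing}), which follows from the Totik-type zero-spacing bounds valid for regular measures under only a positive lower bound on the density. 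Either way the conclusion is $O(1/n)$ spacing of zeros on $S$, which is the sense intended in the corollary.
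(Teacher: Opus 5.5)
Your route is the paper's own. The paper simply cites Theorem~\ref{thm:regular_measure_sufficient} and then Theorem~\ref{thm:linear_spacing}, calling regularity ``trivial.'' Your Steps~1--2 correctly fill in what it omits:
\begin{itemize}
\item $\sigma_{ess}(\mu)=\supp\mu_{ac}$, because the finitely many atoms are isolated points of $\supp\mu$ and the intervals have no isolated points;
\item $\operatorname{cap}(E)\ge (b_1-a_1)/4>0$;
\item $\rho_E$ and Lebesgue measure on $E$ are mutually absolutely continuous, so $w\ge c_0/\rho_E'>0$ $\rho_E$-a.e.;
\item $\mu_{pp}\perp\rho_E$.
\end{itemize}
One small addition: the paper defines regularity through $\operatorname{cap}(\supp\mu)$. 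This coincides with $\operatorname{cap}(E)$, since adjoining finitely many points does not change capacity.

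The real problem is the one you isolate in Step~3, and it is a gap in the statement itself, not just in your write-up. Theorem~\ref{thm:linear_spacing} requires the density to be continuous and positive on $S$, while Assumption~\ref{as:measure_base} only gives $d\mu_{ac}/dx\ge c_0$. The paper never addresses this. Its phrase ``a density $\alpha\in C(-1,1)$ given by the Radon--Nikodym derivative'' is an extra hypothesis, not a consequence.

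Your option~(1) closes the argument only by adopting that hypothesis. So the corollary gets proved for densities continuous on the interior of $\supp\mu_{ac}$, not under Assumption~\ref{as:measure_base} alone.

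Option~(2) does not close it. You do not identify the theorem, and a positive lower bound on the density is not what the standard $O(1/n)$ arguments run on.
\begin{itemize}
\item The classical Erd\H{o}s--Tur\'an bound, under only a lower bound on the weight, gives spacing $O(\log n/n)$.
\item The usual route to $O(1/n)$ combines the Markov--Stieltjes inequalities, $c_0(x_{k+1,n}-x_{k,n})\le\mu((x_{k,n},x_{k+1,n}))\le\lambda_n(x_{k,n})+\lambda_n(x_{k+1,n})$, with the Christoffel-function bound $\lambda_n(x)\le C/n$.
\item That Christoffel-function bound needs $\mu$ to be bounded above near $S$. An integrable singularity such as $|t-x|^{-1/2}$ already gives $\lambda_n(x)\asymp n^{-1/2}$.
\end{itemize}
So without continuity you would have to add, for example, an upper bound on the density near $S$. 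Even then you obtain only the crude bound (\ref{eq:spacing}), not the limit $n(x_{k+1,n}-x_{k,n})\rho_E'(x_{k,n})\to1$ that ``asymptotically linear'' refers to.
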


The proofs of Theorem \ref{thm:distinguish}, \ref{thm:distinguish_outside} 
and \ref{thm:distinguish_general}
are very similar.
In all cases, we prove the statements by splitting roots into several groups based on whether they are isolated or have neighbors and discussing the implications.

\begin{proof}[Proof of Theorem \ref{thm:distinguish}]

Let $\mu $ satisfy Assumption \ref{item:1interval}.
This measure is regular.
Therefore, we can
     employ Theorem \ref{thm:linear_spacing} by 
    putting $S= [a + \delta/2, b -\delta/2]$ and $c/n < \delta/2$,
    where $c>0$ is the $\mu$ dependent constant from Theorem \ref{thm:linear_spacing}.
    As a result of Theorem \ref{lemma:atom_convergence}, which guarantees there is no pollution within the roots of $P_n$, we have three types of roots:
    \begin{itemize}   
        \item Roots inside $S$.
        \item Roots in $\supp \mu_{ac} \setminus S = [a, a +\delta/2] \cup [b-\delta/2,b]$.
        \item Roots outside $\supp \mu_{ac}$ associated with atoms and there is precisely $r$ of them.
     \end{itemize}
    If we choose $x= x_{ k \pm 1,n}$ in (\ref{eq:spacing}), we are guaranteed that two neighboring roots inside $S$ are closer than $c/n$.
    Demanding that $c/n <  \delta/2$, we obtain that all roots of $P_n$ inside $S$ are at least $\delta /2 $ close.
    
    Choosing $x=a+\delta /2$ in  (\ref{eq:spacing}), we obtain that the root $x_{l,n}$ closest to the boundary $a$ of the interval is at most $\delta= \delta/2+\delta/2$ far from the boundary.
    Similarly for the boundary $b$.
    Then, we are guaranteed that all the roots in $\supp \mu_{ac} \setminus S$ have a $\delta$ close neighbor that is $x_{l,n}$.
    We can conclude that every root of $P_n$ that lies in $\supp \mu_{ac}$ has at least one $\delta$ close neighbor in the set of roots.
    
    Moreover, due to Theorem \ref{lemma:atom_convergence} and the exponential convergence, we can assume that the $r$ roots of $P_n$  associated with the atoms are $\delta/2$ close to their corresponding atoms.
    Since we have $2\delta < \Delta$, any root outside $\supp \mu_{ac}$ cannot contain any other root in its $\delta$ neighborhood.
    This completes the proof if $\mu$ satisfies Assumption \ref{as:single_interval}.
\end{proof}

\begin{proof}
[Proof of Theorem \ref{thm:distinguish_outside}]
    If we replace Assumption \ref{as:single_interval} by Assumption \ref{as:atoms_outside},
    we have three types of roots:
    \begin{itemize}
        \item Roots outside $\mathrm{conv hull } \supp \mu_{ac}$ (we show they still correspond to atoms).
        \item Roots inside $\supp \mu_{ac}$. 
        \item The pollution, i.e. roots in $\mathrm{conv hull } \supp \mu_{ac} \setminus \supp \mu_{ac}$.
        
    \end{itemize}
    
    In this case, we lose Theorem \ref{lemma:atom_convergence}.    
    However, to treat the roots outside the continuous part $\mathrm{conv hull } \supp \mu_{ac}$,  we can use points \ref{thm:measure_0_sets} and \ref{thm:density_of_zeros} of Theorem \ref{thm:properties_og_poly} to guarantee that for $n$ large enough (without a provable upper bound), the polynomials $P_n$ will contain precisely $r$ roots outside of $\mathrm{conv hull } \supp \mu_{ac}$ and they will be at most $\delta/2$ far away from their corresponding atoms, and therefore at least $\delta$ separated.

    The bulk behavior of zeros within the $\supp \mu_{ac}$ remains the same by Theorem \ref{thm:linear_spacing} 
    as in the proof above: if we choose $   c/n < \delta/2$, then the distance of neighboring zeros inside $\supp \mu_{ac}$ is at most $\delta$. 
    
    Therefore, the only thing that changes is the fact that point \ref{thm:measure_0_sets} of Theorem \ref{thm:properties_og_poly} now allows (at most) one root in  between the intervals, that does not correspond to any component of $\supp \mu$.
    Let $y$ be the polluting root of $P_n$
    in between the intervals $[a,b]$, $[c,d]$ supporting $\mu_{ac}$.
    We have that $|c-b|\geq2\delta$.
    We distinguish two cases:
    \begin{itemize}
        \item $\dist (y, \{b,c\}) \leq \delta$ and in this case, $y$ can have a $\delta$ close neighbor within the roots of $P_n$. So $y$ is a pollution but still approximates the endpoint of the corresponding interval $\delta$-accurately as $y \in \supp \mu_{ac} + B(0,\delta)$.
        \item 
         $\dist (y, \{b,c\}) >\delta$ and $y$ cannot have any $\delta$ close neighboring root. 
        In this case, $y$ is a pollution that we can rule out because it is surrounded by two continuous parts.
    \end{itemize}
\end{proof}

Let $\{\tilde{P}_n\}_n$ be the \emph{orthonormal} polynomials associated with $\mu$.
These polynomials obey a rescaled three term recurrence relation
\begin{align}
\label{eq:orthonormal_three_term}
    x \tilde{P}_n = a_{n+1} \tilde{P}_{n+1} + b_n \tilde{P}_{n} + a_n \tilde{P}_{n-1}.
\end{align}
It is a standard result 
\cite{DamanikPushnitskiSimon2008} , \cite{SwiderskiTrojan2024}, that the  coefficients $a_n$ are uniformly bounded if and only if the measure is compactly supported. Therefore, for measures satisfying Assumption \ref{as:measure_base},
\begin{align}
\label{eq:three_term_bound}
  \sup_n a_n < a_\infty < \infty.  
\end{align}

The following  \cite{DenisovSimon2005}[Theorem 1.1] provides bounds for the amount of pollution within the roots of $P_n$.
This is key for us to be able to distinguish between roots associated with atoms and pollution.
\begin{theorem}
\label{thm:pollution}
    Let $\mu$ be a measure satisfying Assumption \ref{as:measure_base}.
     Let $\{\tilde{P}_n\}_n$ be the  polynomials orthonormal with respect to $\mu$.
    Let  $d= \dist (x_0, \supp(\mu))>0$.
    Let $\delta_\infty=d^2/(d+\sqrt 2 a_\infty)$, where $a_\infty $ is the uniform bound from (\ref{eq:three_term_bound}) for the recurrence coefficient $a_n$ in (\ref{eq:orthonormal_three_term}).
    Then either $\tilde{P}_n$ or $\tilde{P}_{n+1}$ has no zeros in $(x_0- \delta_\infty, x_0+\delta_\infty)$.
\end{theorem}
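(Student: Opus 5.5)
The plan is to prove the statement through the Christoffel--Darboux / Jacobi-matrix representation of the orthonormal polynomials, in the spirit of Denisov--Simon. Fix $x_0$ with $d=\dist(x_0,\supp\mu)>0$. Write $J$ for the infinite Jacobi matrix of $\mu$, with off-diagonal entries $a_n$ and diagonal entries $b_n$ from (\ref{eq:orthonormal_three_term}). Then $J$ is unitarily equivalent to $M_x$ on $L^2(\R,\mu)$, so $\sigma(J)=\supp\mu$ and hence $\|(J-x_0)^{-1}\|\le 1/d$.

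Suppose, for contradiction, that $\tilde P_n$ has a zero $y$ and $\tilde P_{n+1}$ has a zero $z$, both in $(x_0-\delta_\infty,x_0+\delta_\infty)$. By Theorem~\ref{thm:spectrum+zeros}, $y$ is an eigenvalue of $J_n$ with eigenvector $v=(\tilde P_0(y),\dots,\tilde P_{n-1}(y))^T$. Extend $v$ by zeros to a vector $\varphi$ in $\ell^2$. The recurrence then gives
\begin{align*}
(J-y)\varphi = a_n\tilde P_{n-1}(y)\, e_n ,
\end{align*}
so the only defect sits in coordinate $n$. Similarly, the extended eigenvector $\psi$ of $J_{n+1}$ at $z$ satisfies $(J-z)\psi=a_{n+1}\tilde P_n(z)e_{n+1}$.

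The key step is to combine these two trial vectors so that the boundary defects nearly cancel, or to bound $|(J-x_0)^{-1}e_n|$ from below. In the latter form, one uses
\begin{align*}
\varphi=(J-y)^{-1}a_n\tilde P_{n-1}(y)e_n
\end{align*}
together with the resolvent bound. The plan is to compare the last two coordinates of $(J-x_0)^{-1}$ applied to $e_n$ and $e_{n+1}$ with the Weyl $m$-function identities for truncated matrices. The exact Denisov--Simon argument considers the $2\times2$ block of $(J-x_0)^{-1}$ at positions $n,n+1$, written through the Schur complement, and shows that a zero of $\tilde P_n$ near $x_0$ forces a large entry. Concretely, near a zero $y$ of $\tilde P_n$ the ratio $\tilde P_{n-1}/\tilde P_n$ blows up, and this ratio equals a diagonal entry of the truncated resolvent.

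One then derives the quadratic inequality
\begin{align*}
|x_0-y|\,\bigl(|x_0-y|+\sqrt2\,a_\infty\bigr)\ \ge\ d^2
\end{align*}
from $\|(J-x_0)^{-1}\|\le 1/d$. Here the factor $\sqrt2\,a_\infty$ comes from the norm of the off-diagonal coupling $(a_n,a_{n+1})$ seen by a vector supported on coordinates $\{n,n+1\}$. The same inequality holds for $z$ if both polynomials have zeros in the window. Solving the inequality shows that $\max(|x_0-y|,|x_0-z|)\ge d^2/(d+\sqrt2 a_\infty)=\delta_\infty$, which gives the contradiction.

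I expect the main obstacle to be this middle step: arranging the two trial vectors so that their residual is supported on two adjacent coordinates and has norm at most $\sqrt2\,a_\infty$ times a suitable factor, with a constant sharp enough to produce exactly $\delta_\infty$. My first attempt would be the test vector $\tilde P_n(x_0)\varphi-\tilde P_{n-1}(x_0)\psi$-type combination, normalised, followed by an estimate of $\|(J-x_0)u\|$. If the constants do not come out, I would simply cite \cite{DenisovSimon2005}[Theorem 1.1], since its hypotheses are only compact support and the bound (\ref{eq:three_term_bound}), both of which Assumption~\ref{as:measure_base} supplies.
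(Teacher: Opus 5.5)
The paper gives no argument for this statement: it quotes Theorem 1.1 of Denisov--Simon. Your closing fallback (cite it, after noting that compact support gives $\sup_n a_n<\infty$ and $\sigma(J)=\supp\mu$) is therefore exactly the paper's route. Your self-contained sketch, however, has a genuine gap at its central step.

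The inequality $|x_0-y|\bigl(|x_0-y|+\sqrt2\,a_\infty\bigr)\ge d^2$ is asserted for a single zero $y$ of $\tilde P_n$, and in that form it is false. Take the uniform density on $[-2,-1]\cup[1,2]$, which satisfies Assumption~\ref{as:measure_base}: every odd-degree $\tilde P_n$ vanishes at $x_0=0$, while $d=1$. Any valid inequality must couple a zero of $\tilde P_n$ with a zero of $\tilde P_{n+1}$, and your proposed mechanisms do not do this.

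\begin{itemize}
\item The defects of $\varphi$ and $\psi$ sit on different coordinates $e_n$ and $e_{n+1}$, so they cannot cancel.
\item Your own computation gives $\norm{(J-x_0)\varphi}^2=(y-x_0)^2\norm{\varphi}^2+a_n^2\tilde P_{n-1}(y)^2\ge d^2\norm{\varphi}^2$. The defects are thus bounded \emph{below}, so no combination of $\varphi$ and $\psi$ has a small residual.
\item In the Schur-complement picture, the large quantity $a_n^2[(J_n-x_0)^{-1}]_{n-1,n-1}=-a_n\tilde P_{n-1}(x_0)/\tilde P_n(x_0)$ enters the \emph{denominator} of $[(J-x_0)^{-1}]_{nn}$. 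It therefore forces no resolvent entry to be large.
\end{itemize}

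The missing idea is a trial vector built at $x_0$ rather than at the zeros. Take $u=(\tilde P_0(x_0),\dots,\tilde P_n(x_0),0,\dots)$. The recurrence gives
\begin{align*}
(J-x_0)u=a_{n+1}\bigl(-\tilde P_{n+1}(x_0)\,e_n+\tilde P_n(x_0)\,e_{n+1}\bigr),
\end{align*}
so $d\norm{u}\le\sqrt2\,a_\infty\max\bigl(|\tilde P_n(x_0)|,|\tilde P_{n+1}(x_0)|\bigr)$. This is where $\sqrt2\,a_\infty$ actually comes from: two residual entries, both carrying $a_{n+1}$, not the pair $(a_n,a_{n+1})$.

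Next, show that a nearby zero forces smallness at $x_0$. Suppose $\tilde P_k(y)=0$ with $t=|y-x_0|<d$; the case $y=x_0$ is trivial.
\begin{itemize}
\item Christoffel--Darboux at $(x_0,y)$ gives $\sum_{j<k}\tilde P_j(x_0)\tilde P_j(y)=a_k\tilde P_k(x_0)\tilde P_{k-1}(y)/(x_0-y)$.
\item Your identity $\varphi=a_k\tilde P_{k-1}(y)(J-y)^{-1}e_k$, with $\norm{(J-y)^{-1}}\le 1/(d-t)$, gives $a_k|\tilde P_{k-1}(y)|\ge(d-t)\norm{\varphi}$.
\item Cauchy--Schwarz then yields $|\tilde P_k(x_0)|\le\frac{t}{d-t}\bigl(\sum_{j<k}\tilde P_j(x_0)^2\bigr)^{1/2}\le\frac{t}{d-t}\norm{u}$ for $k\in\{n,n+1\}$.
\end{itemize}
If both $\tilde P_n$ and $\tilde P_{n+1}$ had zeros within $t<\delta_\infty$ of $x_0$, the two bounds combine to $d(d-t)\le\sqrt2\,a_\infty t$. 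That is, $t\ge d^2/(d+\sqrt2\,a_\infty)=\delta_\infty$, a contradiction. The governing condition is linear in $t$, not the quadratic you wrote.
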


\begin{proof}[Proof of Theorem \ref{thm:distinguish_general}]
In the situation, when we only have Assumption \ref{as:measure_base}, we can still use the same approach as in the proofs above but we need to deal with the problem of distinguishing polluting roots from roots corresponding to atoms
in between the intervals supporting $\mu_{ac}$.
Given a root $y$  of $P_n$ in between the intervals supporting $\mu_{ac}$, we provide a criterion in form of inspecting neighbors of $y$ in roots of $P_{n+1}$.
For simplicity, let us consider measure with
\begin{align*}
    \supp \mu = [a_1,b_1] \cup \{x_1\} \cup [a_2,b_2], \ \text{ and }\ a_1<b_1<x_1<a_2<b_2 \text{ are  $>2\delta$ separated}.
\end{align*}
This time, we need  roots of both $P_n$ and $P_{n+1}$ for $n$ sufficiently large (we explain the choice of $n$ below).
First, we split the roots into two groups:
\begin{itemize}
    \item Roots in $\supp \mu_{ac} + B(0,\delta).$
    \item Roots in $\mathrm{convhull } \ \supp \mu_{ac} \setminus (\supp \mu_{ac} + B(0,\delta)) = [b_1+\delta,a_2-\delta]$.
\end{itemize}

For the roots in $\supp \mu_{ac} + B(0,\delta)$, we can assume $n$ large enough so that these roots appear in $\delta$ close bulks of at least  three due to Theorem \ref{thm:linear_spacing}.
Also, we can assume $n$ large enough so that if some root falls inside $(b_1,b_1+\delta)$ or $(a_2-\delta,a_2)$, this root has a $\delta$ close neighbor in one of the bulks  associated with the continuous part.

Next, we study roots in $[b_1+\delta,a_2-\delta]$.
We set $\rho = \delta^2/(\delta + \sqrt2 a_\infty)$.
Note that $\rho < \delta.$
We use the following:
\begin{enumerate} 
    \item  \label{item:1} Point \ref{thm:density_of_zeros} of Theorem \ref{thm:properties_og_poly} states that for $n$ sufficiently large, $P_n$ will have at least one root in $(x_1-\rho,x_1+\rho)$ and therefore also in $(x_1-\delta,x_1+\delta)$.
    \item  \label{item:2} Point \ref{thm:measure_0_sets} of Theorem \ref{thm:pollution} states that we might have one or no root in $(b_1+\delta, x_1)$ and one or no root in $(x_1,a_2-\delta)$.
    \item Lastly, Theorem \ref{thm:pollution} states that for a point $y \in(b_1+\delta,x_1-\delta)\cup (x_1+\delta,a_2-\delta)$,
either $P_n$ or $P_{n+1}$ has no roots in $(y-\rho,y+\rho)$.  \label{item:3}
\end{enumerate}

With these observations, we classify the following situations that might occur:
\begin{itemize}
    \item For both $P_n$ and $P_{n+1}$, there is exactly one root $x_{i,n}, x_{l,n+1}$, respectively in $(b_1+\delta,a_2-\delta)$, it is precisely the one root that is associated with the atom $x_1$ and both $x_{i,n}, x_{l,n+1} \in (x_1-\rho,x_1+\rho)$ due to \ref{item:1}.
    \item 
    $P_n$ has two roots $x_{i,n}, x_{i+1,n}$ in $(b_1+\delta,a_2-\delta)$ and
     these two roots are $\delta$ close. 
     \ref{item:2} states that there must be a point of $\supp \mu$ in between them.
     Then, surely, $(x_{i,n}, x_{i+1,n})/2$ is $\delta$ close to the atom $x_1$.
     \item 
     $P_n$ has two roots $x_{i,n}, x_{i+1,n}$ in $(b_1+\delta,a_2-\delta)$ and
     these two roots are $\delta$ separated.
\end{itemize}
We elaborate on the situation when $P_n$ has two $\delta$ separated roots $x_{i,n}, x_{i+1,n}$ and we explain how to determine which one is associated with the atom $x_1$ and which one is a pollution.
Now, we need to also inspect the roots of $P_{n+1}$.
We further distinguish two situations.
If $P_{n+1}$ has only one root $(b_1+\delta,a_2-\delta)$, we know this root $x_{l,n+1}$ must be associated with the atom $x_1$.
Without loss of generality, let $x_{i,n}$ be the root closer to $x_{l,n+1}$. Then, due to \ref{item:1}, 
\begin{align*}
    |x_{i,n} - x_{l,n+1}| \leq
    |x_1 - x_{i,n}| + |x_1-x_{l,n+1}|<\rho/2+\rho/2= \rho,
    \\
    |x_{i+1,n} - x_{l,n+1}|>
    | |x_{i+1,n} - x_{i,n}| - |x_{i,n} - x_{l,n+1}| | > |\delta - \rho | > \rho.
\end{align*}
 
Lastly, we assume $P_{n+1}$ also
has two roots $x_{l,n+1}, x_{l+1,n+1}$ in $(b_1+\delta,a_2-\delta)$ that are $\delta$ separated.
Let us consider two sets $A = (x_{i,n}- \rho ,x_{i,n} +\rho) $, $B = (x_{i+1,n}- \rho ,x_{i+1,n} +\rho)$.
Without loss of generality, let $x_{i,n}$, $x_{l,n+1}$ be the roots associated with the atom $x_1$ and let $x_{i+1,n},x_{l+1,n+1}$ be the pollution.
Due to \ref{item:1},
we observe that for the root of $P_n$ associated with an atom, we can find a $\rho$ close neighbor in roots of $P_n$ 
\begin{align*}
    | x_{i,n} - x_{l,n+1} | < | x_{i,n} - x_1| + |x_1 - x_{l,n+1} | < \rho/2 + \rho/2 = \rho. 
\end{align*}
Moreover, for the polluting root of $P_n$, we can bound the distance from all roots of $P_{n+1}$ by $\rho$
\begin{align*}
    &|x_{i+1,n} - x_{l,n+1}| > |x_{i+1,n} - x_{i,n}| - |x_{i,n} - x_{l,n+1}|   | > \delta - \rho > \rho,
    \\
   & |x_{i+1,n} - x_{l+1,n+1}| > \rho \text{ due to } \ref{item:3},
   \\
   &|x_{i+1,n} - x_{j,n+1}| > \rho, \ j\not = l,l+1 \text{ due to } x_{j,n+1} \in \supp \mu_{ac} + B(0,\delta).
\end{align*}

\end{proof}

\subsection{Proof of Theorem \ref{thm:inclusions_and_isomorphisms}}
\label{proof:inclusions_and_isomorphisms}
\begin{proof}
    First, let us note that since we are in dimension one, for all $n \in \mathbb{N}$, one of the following must happen:
    either $\rank M_n = \rank M_{n+1}$, or
         $\ker \phi_n = \ker \phi_{n+1}$.
    Moreover, if the flat condition is satisfied for some $n_0$, it is satisfied for all $n\geq n_0$.
    
    Now, we show that if the flat extension condition is not satisfied for $n \in \mathbb{N}$, then the following set-inclusion holds
    \begin{align} \label{eq:non_flat_inclusion}
        \mathcal{K}_n \subsetneq \mathcal{K}_{n+1}.
    \end{align}
    Since $\ker \phi_n = \ker \phi_{n+1}$, we have that
    \begin{align*}
        [p]_{\phi_n}
        =
        [p]_{\phi_{n+1}}, \quad \text{for } p \in \R[x]_n.
    \end{align*}
    Therefore,
    \begin{align*}
        \left([p]_{\phi_n} \in \mathcal{K}_n \right)
        \Longrightarrow
        \left([p]_{\phi_n} \in \mathcal{K}_{n+1} \right), \quad \text{for } p \in \R[x]_n.
    \end{align*}
    At the same time, $x^{n+1} \not \in \ker \phi_{n+1}$.
    This shows (\ref{eq:non_flat_inclusion}).

    If now the flat extension condition holds, we refer the reader to \cite[Proposition 5.16]{LopezQuijorna2021DetectingGNS}, which states that
    the mapping
    \begin{align}
    \label{eq:canonical_isomorphism}
        \iota_n : \mathcal{K}_n \to 
        \mathcal{K}_{n+1}: [p]_{\phi_n} \mapsto [p]_{\phi_{n+1}},
    \end{align}
     is an isomorphism between $\mathcal{K}_n$ and $\mathcal{K}_{n+1}$,
    meaning 
    \begin{align*}
        \mathcal{K}_n \simeq \mathcal{K}_{n+1}.
    \end{align*}
\end{proof}

\subsection{Proof of Lemma \ref{lemma:isomorphism}}
\label{proof:isomorphisms}
\begin{proof} 

\begin{enumerate}
    \item We factorize the same set in (\ref{eq:set_equality}), so it suffices to show that the two equivalences by which we factorize are the same. 
    For $p_1,p_2 \in \R[x]_n$, we have that
    \begin{align*}
        p_1 \sim p_2 \text{ w.r.t. } \ker \phi_n
        &\Longleftrightarrow 
        p_1 - p_2 \in \ker \phi_n
        \\
        &\Longleftrightarrow 
        \phi_n(p_1-p_2,p_1-p_2) =0
         \\
        &\Longleftrightarrow
        \int (p_1(x)-p_2(x))^2 d\mu(x)
         \\
        &\Longleftrightarrow
        p_1-p_2 = 0 \text{ a.e. w.r.t. } \mu
         \\
        &\Longleftrightarrow
        p_1 \sim p_2 \text{ in the sense } \mu  \text{ equal a.e.}
        \end{align*}
    \item Let us write a precise definition of the equivalence classes spanning both of the sets in (\ref{eq:Hilbert_space_equality}). 
    Take $p \in \R[x]_n, $ then
    \begin{align*}
      \R[x]_n \Big|_{\mu}  \ni [p]_{\mu;\R[x]_n} = \{ f:\R \to \R \text{ polynomials}, \ \deg f \leq n; \ f = p \ \mu-\text{a.e.} \},
       \\
      \mathcal{H}_n\ni [p]_{\mu;L^2(\R,\mu)} = \{ f:\R \to \R \text{ measurable functions}; \ f = p \ \mu-\text{a.e.} \}.
    \end{align*}
    It is obvious that these two sets are different, to be specific, $[p]_{\mu;\R[x]_n}$ is a strict subset of $[p]_{\mu;L^2(\R,\mu)}$. 
    Let us define
\begin{align*}
    T:
\mathbb{R}[x]_n\big|_\mu
\longrightarrow
\mathcal{H}_n:[p]_{\mu;\mathbb{R}[x]_n} \mapsto[p]_{\mu;L^2(\R,\mu)}.
\end{align*}
We show that $T$ is an isometric isomorphism. It is obviously a linear operator acting between two finite-dimensional Hilbert spaces 
\begin{enumerate}
    \item $T$ is well defined and injective. Take any $p_1,p_2 \in \R[x]_n$, then
    \begin{align*}
        &[p_1]_{\mu;\R[x]_n} = [p_2]_{\mu;\R[x]_n}
        \Longleftrightarrow p_1= p_2 \text{ a.e. w.r.t. } \mu
        \Longleftrightarrow \\\Longleftrightarrow 
        &T[p_1]_{\mu;\R[x]_n} = [p_1]_{\mu;L^2(\R,\mu)} = [p_2]_{\mu;L^2(\R,\mu)} = T[p_2]_{\mu;\R[x]_n}
    \end{align*}
    \item $T$ is onto $\mathcal{H}_n$. Let us take an arbitrary element $[h]_{\mu;L^2(\R,\mu)} \in \mathcal{H}_n$, where
    \begin{align*}
        h(x) = \sum_{k=0}^n c_kx^k.
    \end{align*}
    Then, if we define
    \begin{align*}
        [p(x)]_{\mu;\R[x]_n} = \left[ \sum_{k=0}^n c_kx^k\right]_{\mu;\R[x]_n},
    \end{align*}
    we get the desired element in $[h]_{\mu;L^2(\R,\mu)}\in \mathcal
    {H}_n$ as an image of $[p]_{\mu;\R[x]_n}$: \begin{align*}
        T[p(x)]_{\mu;\R[x]_n} = [h]_{\mu;L^2(\R,\mu)}.
    \end{align*} 
    \item $T$ is an isometry:
    \begin{align*}
        \begin{Vmatrix}
            [p]_{\mu;\R[x]_n}
        \end{Vmatrix}^2_{(\R[x]_n; \phi_n)} = \phi_n(p,p)=\int p^2(x)d\mu(x) =\begin{Vmatrix}
            T[p]_{\mu;\R[x]_n}
        \end{Vmatrix}^2_{L^2(\R,\mu)}.
    \end{align*}
\end{enumerate}
\end{enumerate}    
\end{proof}

\section{Additional Experimental Results}

The following Figure \ref{fig:two_polynomials} demonstrates the behavior of two consecutive orthogonal polynomials generated by a moment matrix.
This figure shows:
\begin{itemize}
    \item Interlacing property of orthogonal polynomials \ref{thm:interlacing}.
    \item Convergence of the corresponding roots to the atom \ref{lemma:atom_convergence}.
    \item All of the roots in Figure \ref{fig:two_polynomials} are supported in the convex hull of the support of the underlying measure as we would expect due to Theorem \ref{thm:roots_and_support}.

    \item 
    Most of the roots are even concentrated inside $\supp \mu$.
    However, we can observe two roots polluting the interval $[c-r/3, c+r/3]$, slightly worsening the IOU metric compared to the situation if they were absent.
    
    \item 
    These polluting roots are even expected by the theory, since 
    measure zero sets are allowed to have one root due to point \ref{thm:measure_0_sets} of Theorem   
\ref{thm:properties_og_poly}.
\end{itemize}
\begin{figure} [H]
    \centering
    \includegraphics[width=0.7\linewidth]{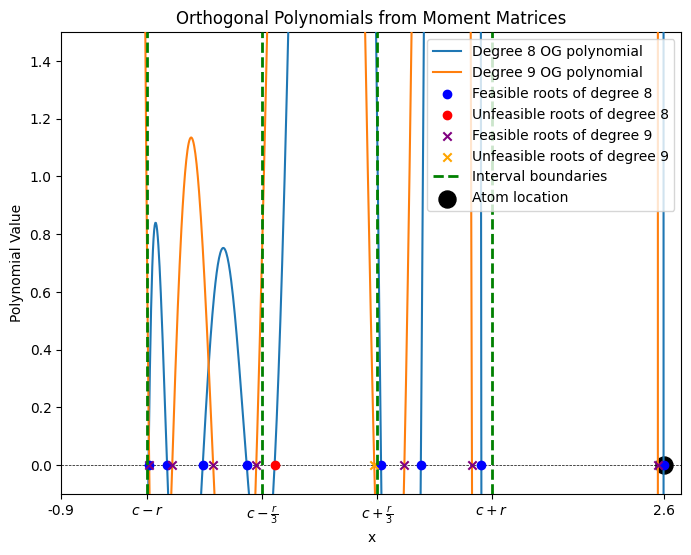}
    \caption{Behavior of two consecutive orthogonal polynomials $p_8(x), p_{9}(x)$ for $(a,c,r) = (1.0,0.6,1.0)$.}
    \label{fig:two_polynomials}
\end{figure}

\subsection{TSSOS Robust extraction vs Orthogonal polynomials}
\label{Sec:TSSOS_vs_OGPoly}
In the Section \ref{sec:num} above, we study a sequence of orthogonal polynomials generated by a sequence of moment matrices corresponding to a measure $\mu$ satisfying Assumption \ref{as:measure_base}.
Namely, we analyze the behavior of the zeros of these orthogonal polynomials.
This section focuses on how we obtain the aforementioned zeros of the orthogonal polynomials.

To be specific, we
compare our algorithm $\texttt{roots}$ \ref{alg:roots_from_moments} with the 
TSSOS routine \verb|extract_solutions_robust| \citep{wang2021tssos}.
The function \verb|extract_solutions_robust| takes as input the solution of an SDP arising from a moment relaxation of a polynomial optimization problem (POP) and attempts to extract a minimizer of the POP from the associated moment matrix.

We show that when the \textbf{rtol} parameter of \verb|extract_solutions_robust| is set to zero, its output coincides with that of Algorithm~\ref{alg:roots_from_moments}. This observation provides numerical evidence that the interpretation of the \verb|extract_solutions_robust| in Remark \ref{remark:TSSOS} is correct.
We consider same moment matrices $M_N$ as in the previous Section \ref{sec:num}.

The reported statistics are based on 120 instances of the POP defined in~(\ref{eq:experiments_2}), generated by sampling the parameters $a$, $c$, and $r$. For each instance, we computed the moment relaxations at various relaxation orders and compared the outputs of the two algorithms for the corresponding pseudomoment matrices.

\begin{table}[th]
\centering
\caption{Numerical comparison between TSSOS and the proposed method over 120 polynomial optimization instances.}
\label{tab:tssos_comparison}
\begin{tabular}{lccc}
\hline
Metric & Value \\
\hline
Number of test instances & 120 \\
Cumulative absolute difference & $3.41 \times 10^{-11}$ \\
Maximum absolute difference (single instance) & $1.17 \times 10^{-11}$ \\
Minimum absolute difference (single instance) & $5.12 \times 10^{-17}$ \\
Number of non-PSD moment matrices & 2 \\
\hline
\end{tabular}
\end{table}

\begin{figure} [h]
    \centering
    \includegraphics[width=0.62\linewidth]{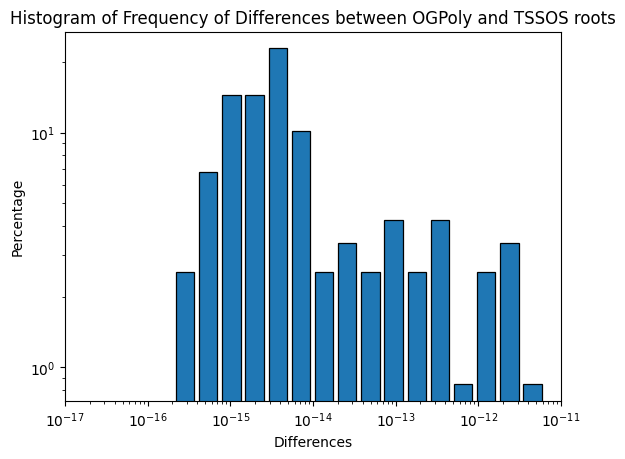}
    \caption{Histogram of frequency of absolute differences between the outputs of TSSOS and OGPoly, displayed on a logarithmic scale. }
    \label{fig:placeholder}
\end{figure}

We conclude that the outputs of Algorithm~\ref{alg:roots_from_moments} and \verb|extract_solutions_robust| agree up to numerical errors at the level of machine precision.

\end{document}

%% file: math_commands.tex
\usepackage{amsmath,amsfonts,bm}

\def\eqref#1{equation~\ref{#1}}

\def\1{\bm{1}}

\DeclareMathAlphabet{\mathsfit}{\encodingdefault}{\sfdefault}{m}{sl}
\SetMathAlphabet{\mathsfit}{bold}{\encodingdefault}{\sfdefault}{bx}{n}

\newcommand{\R}{\mathbb{R}}

